\newcommand{\PreserveBackslash}[1]{\let\temp=\\#1\let\\=\temp}
\newcolumntype{C}[1]{>{\PreserveBackslash\centering}p{#1}}
\newcolumntype{R}[1]{>{\PreserveBackslash\raggedleft}p{#1}}
\newcolumntype{L}[1]{>{\PreserveBackslash\raggedright}p{#1}}
\DeclareMathOperator*{\Ext}{\ensuremath{Ext}}
\def\wbar{\accentset{{\cc@style\underline{\mskip8mu}}}}
\numberwithin{equation}{section}
\theoremstyle{plain}
\newtheorem{theorem}{Theorem}[section]
\newtheorem{defn} [theorem] {Definition}
\newtheorem{lemma} [theorem] {Lemma}
\newtheorem{remark}[theorem]{Remark}
\newtheorem{cor}[theorem]{Corollary}
\newtheorem{pro}[theorem]{Proposition}
\begin{document}
\bibliographystyle{unsrt}
\title{Algebraic Surfaces of General Type  with $p_g=q=1$ and Genus 2 Albanese Fibrations}
\author{Songbo Ling}
\date{}
\renewcommand{\thefootnote}{\fnsymbol{footnote}}
\maketitle
 
\footnotetext{This work was completed at Universit\"at Bayreuth under the financial support of China Scholarship Council ``High-level university graduate program''.}

\begin{abstract}
In this paper, we study algebraic surfaces of general type with $p_g=q=1$ and genus 2 Albanese  fibrations.

We first study the examples of surfaces with $p_g=q=1, K^2=5$ and genus 2 Albanese fibrations constructed by Catanese   using singular bidouble covers of $\mathbb{P}^2$. We  prove that these surfaces  give an irreducible  and connected   component of   $\mathcal{M}_{1,1}^{5,2}$,  the Gieseker moduli space of surfaces of general type with $p_g=q=1, K^2=5$  and genus 2 Albanese fibrations.

Then by constructing surfaces with $p_g=q=1,K^2=3$ and a genus 2 Albanese  fibration such that the number of the summands of the direct image of the bicanonical sheaf (under the Albanese map) is 2, we give a negative answer to a question of Pignatelli.
\end{abstract}

\section{Introduction}
Algebraic surfaces of general type with $p_g=q=1$ have attracted interest of  many authors since they are irregular surfaces of general type with the lowest geometric genus. For these surfaces, one has $2\leq K^2\leq 9$ and  the Albanese map is a genus $g$ fibration over an elliptic curve.
 By the  results of Mo\v{\i}\v{s}ezon \cite{Moi65},  Kodaira \cite{Kod68} and Bombieri \cite{Bom73},   these surfaces belong to a finite number  of families.  
Since $g$ is a differentiable invariant (\cite{CP06} Remark 1.1),  such surfaces   with different $g$ belong to different connected components of the moduli space.

In this paper, we restrict ourselves to the case $g=2$. 
By a result of Xiao \cite{Xiao85}, one has $K^2\leq 6$.   The case $K^2=2$ has been   accomplished by Catanese \cite{Cat81} and Horikawa \cite{Hor81} independently: the moduli space of such surfaces is irreducible of dimension 7.  The case $K^2=3$ has been studied   by Catanese-Ciliberto \cite{CC91, CC93} and completed by Catanese-Pignatelli  \cite{CP06}: the moduli space of such surfaces consists of three  5-dimensional  irreducible and connected components.

In the case $K^2=4$, there are many examples  (e.g.  Catanese \cite{Cat98}, Rito \cite{Rit1}, Polizzi \cite{Pol09}, Frapporti-Pignatelli \cite{FP15} and Pignatelli \cite{Pig09}).  In particular, Pignatelli \cite{Pig09} found 8 irreducible components of the moduli space of surfaces of general type  under the assumption that  the direct image of the bicanonical sheaf under the Albanese map  is a direct sum of three  line bundles.  

However, there are very few examples when $K^2>4$. In fact,   for  surfaces with $p_g=q=1$, $K^2=5$, $g=2$, the only   known examples   are constructed by Catanese \cite{Cat98} and Ishida \cite{Ish05};  for surfaces with $p_g=q=1$, $K^2=6$,  $g=2$, no example is known.

In this paper, we first analyze the examples constructed by Catanese in \cite{Cat98} Example 8 and prove the following
\begin{theorem}
	\label{theorem 5,2}
	The surfaces constructed by Catanese constitute  a 3-dimensional  irreducible and connected component of $\mathcal{M}_{1,1}^{5,2}$.
\end{theorem}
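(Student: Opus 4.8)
The plan is to exhibit Catanese's surfaces as the fibres of a single explicit algebraic family, and then to establish the three properties that together force its image in $\mathcal{M}_{1,1}^{5,2}$ to be one $3$-dimensional irreducible component: irreducibility, openness and closedness. Recall that each such surface $S$ is obtained by resolving the singularities of a bidouble cover $X\to\mathbb{P}^2$ with Galois group $(\mathbb{Z}/2\mathbb{Z})^2$, determined by Catanese's building data: branch curves $D_1,D_2,D_3\subset\mathbb{P}^2$ of prescribed degrees and line bundles $L_1,L_2,L_3$ subject to the compatibility relations $2L_i\equiv D_j+D_k$, together with a fixed configuration of singular points imposed on the $D_i$ whose resolution accounts for the elliptic Albanese base and the invariants $p_g=q=1$, $K^2=5$. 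I would first assemble the parameter space $\mathcal{B}$ of all admissible building data as a locally closed subscheme of a product of linear systems, cut out by the equisingularity conditions, check that $\mathcal{B}$ is smooth and irreducible, and compute $\dim\mathcal{B}$. After quotienting by $\mathrm{Aut}(\mathbb{P}^2)=\mathrm{PGL}_3$ and by the rescalings of the defining equations that do not alter the cover, the natural classifying morphism $\mathcal{B}\to\mathcal{M}_{1,1}^{5,2}$ has irreducible image, whose dimension I expect to come out to be exactly $3$.

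For openness I would show that the family is locally complete, i.e.\ that the Kodaira--Spencer map of $\mathcal{B}\to\mathcal{M}_{1,1}^{5,2}$ is surjective at every point. The efficient route is Catanese's theory of natural deformations of bidouble covers: the deformations induced by varying the branch data are precisely the natural deformations, and $\mathcal{B}$ already contains all of them, so it suffices to prove that every first-order deformation of $S$ is natural and that $S$ is unobstructed. Concretely I would push the deformation sheaf $\Theta_S$ down to $\mathbb{P}^2$ and decompose it into the four eigensheaves of the $(\mathbb{Z}/2\mathbb{Z})^2$-action, thereby reducing $h^1(S,\Theta_S)$ to the cohomology of explicit sheaves on $\mathbb{P}^2$; the desired vanishing of the eigenspaces corresponding to non-natural directions then yields $h^1(S,\Theta_S)=3$, matching the dimension of the image and forcing the image to be open.

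It remains to prove closedness, i.e.\ that any surface lying in the closure of the image is again one of Catanese's surfaces. Here I would argue that the defining structure cannot degenerate away: along a one-parameter degeneration the genus $2$ Albanese fibration together with the associated $(\mathbb{Z}/2\mathbb{Z})^2$-symmetry (equivalently, the specific shape of $\pi_*\omega_{S}^{\otimes 2}$ dictated by the bidouble presentation) specializes to data of the same type, so the central fibre again admits the bidouble cover structure and hence corresponds to a point of $\mathcal{B}$. Being open, closed and connected (the last by irreducibility of $\mathcal{B}$), the image is then a single irreducible and connected component of dimension $3$. I expect the local completeness step to be the main obstacle: because the bidouble cover here is \emph{singular}, $\Theta_S$ does not split as cleanly as for a smooth cover, and one must control the contribution of the exceptional curves of the resolution and of the equisingular deformations of the branch data to $h^1(S,\Theta_S)$, and prove exactly the vanishing that excludes non-natural first-order deformations.
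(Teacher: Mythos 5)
Your overall skeleton (irreducible $3$-dimensional image, openness via a cohomology computation showing local completeness, closedness via specialization of the defining structure) matches the paper's strategy, but two of the three steps have genuine gaps as you have set them up. First, you treat the singular bidouble cover of $\mathbb{P}^2$ throughout and flag the failure of the clean eigensheaf splitting of $\Theta_S$ as ``the main obstacle''; the paper removes this obstacle at the outset by blowing up the four points $P_1,\dots,P_4$ and observing that a general $S$ in the family is a \emph{smooth} bidouble cover of the degree-$5$ Del Pezzo surface $X$, with normal-crossing branch divisor, so that Catanese's Theorem 2.16 applies verbatim and $h^1(T_S)$ reduces to $h^0$ of explicit logarithmic sheaves on $X$ (computed by the Bauer--Catanese push-forward lemmas). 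Without this reduction your computation of $h^1(S,\Theta_S)=3$ is not carried out, only hoped for. More seriously, even granting $h^1(T_S)=3$ and ``all first-order deformations are natural'' for a \emph{general} $S$, this only shows that $\overline{\mathcal{M}}$ is an irreducible component; it does not give openness of $\mathcal{M}$ at the special surfaces of the family (those whose canonical model acquires a node from a tangency of $B'$ with $C$), where the smooth-bidouble-cover deformation theory does not apply. The paper needs a separate argument there (Section 5): it tracks the fixed locus of the Albanese involution in a family, shows the four disjoint smooth sections of the conic bundle $\mathcal{C}$ persist under small deformation, deduces that $\mathbb{P}(V_2)$ keeps three independent sections, hence that $V_2$ remains a direct sum of three line bundles, each forced to be $\mathcal{O}_B(2\cdot 0)$ by semicontinuity. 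Your proposal has no substitute for this step, and it is exactly what upgrades ``irreducible component'' to ``connected component''.

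The closedness step is also only gestured at. You assert that the $(\mathbb{Z}/2\mathbb{Z})^2$-symmetry, or equivalently ``the specific shape of $f_*\omega_S^{\otimes 2}$'', specializes to data of the same type, but that assertion is the whole content of the step and is not automatic: a priori $V_2$ could jump to an indecomposable bundle or to a different splitting type in the limit. The paper's route is to first prove an exact characterization of the family by the numerical condition $(\star)$ on $(V_1,V_2,\tau)$ --- which requires the nontrivial converse that any surface satisfying $(\star)$ has canonical model a bidouble cover of $X$ (reconstructing the second involution from the conic bundle equation $a_1^2y_1^2+a_2^2y_2^2+a_3^2y_3^2=0$ and checking the relevant branch divisor is pulled back from $X$) --- and then to prove $(\star)$ is closed under specialization by semicontinuity of $h^0(V_2(-2\cdot 0))$ combined with a case analysis over Atiyah's classification of rank-$3$ bundles on an elliptic curve. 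You would need to supply both of these arguments (or an equivalent) for your closedness claim to stand.
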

The idea of the proof for Theorem \ref{theorem 5,2}  is the following.

First we show that  
a general surface in each of  the two families  (in \cite{Cat98} Example 8, case I and case II) is a smooth bidouble cover of the Del Pezzo surface of degree 5. Moreover,  we prove that  the two  families  are equivalent up to an automorphism of this Del Pezzo surface.  
Hence the images of the two families coincide as an irreducible subset  $\mathcal{M}$ in  $\mathcal{M}_{1,1}^{5,2}$.

Then using Catanese's theorem \cite{Cat84} on  deformations of smooth bidouble covers and  a method of Bauer-Catanese \cite{BC13},  we calculate $h^1(T_S)$ for a general surface in this family and show  that it is equal to the dimension of  $\mathcal{M}$ (which is 3).    By studying  the limit surface in the family, we show that $\mathcal{M}$ is a Zariski closed subset of $\mathcal{M}_{1,1}^{5,2}$, hence $\mathcal{M}$ is an irreducible component of $\mathcal{M}_{1,1}^{5,2}$.

By studying the deformation of the branch curve of the double cover $S\rightarrow \mathcal{C}\subset \mathbb{P}(V_2)$ (where $V_2=f_*\omega_{S/B}^{\otimes 2}$ and  $\mathcal{C}$ is the conic bundle in Catanese-Pignatelli's structure theorem for genus 2 fibrations), we show that $\mathcal{M}$ is an analytic open subset of  $\mathcal{M}_{1,1}^{5,2}$.   Therefore  $\mathcal{M}$ is a  connected component of $\mathcal{M}_{1,1}^{5,2}$.

\vspace{3ex}
Topological and deformation invariants play an important role in studying the moduli spaces of algebraic surfaces. For surfaces $S$ of general type with $p_g=q=1$, Catanese-Ciliberto (cf. \cite{CC91} Theorems 1.2 and  1.4)  proved   that the the number $\nu_1$ of direct summands of $f_*\omega_S$ (where $f$ is the Albanese fibration of $S$ and $\omega_S$ is the canonical sheaf of $S$)  is a topological invariant. After that Pignatelli (cf. \cite{Pig09} p. 3) asked: is the number $\nu_2$ of direct summands of  $f_*\omega_S^{\otimes 2}$ a deformation or a topological invariant?

In the last part of this thesis  we give a negative answer to Pignatelli's question, i.e.
\begin{theorem}
	\label{theorem number of V2}
	The number $\nu_2$ is  not a deformation invariant, thus it is  not a topological invariant, either.
\end{theorem}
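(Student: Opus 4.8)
The plan is to reduce the statement to producing two surfaces that lie in the \emph{same} connected component of a Gieseker moduli space but have different $\nu_2$. Since a topological invariant is automatically a deformation invariant (deformation-equivalent surfaces are diffeomorphic by Ehresmann's theorem), the second assertion follows formally from the first, so it suffices to break deformation invariance. I would work in the case $K^2=3$, where by Catanese--Pignatelli \cite{CP06} the moduli space $\mathcal{M}_{1,1}^{3,2}$ of surfaces with $p_g=q=1$, $K^2=3$ and genus $2$ Albanese fibration is completely understood: it consists of exactly three irreducible connected components. The decisive leverage is that \emph{any} surface with these invariants that I manage to construct must automatically fall into one of these three components, so I never have to exhibit a deformation by hand --- I only have to compare the $\nu_2$ of a newly constructed surface with the value taken by the general member of its component.

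Here $f\colon S\to B$ is the Albanese map over an elliptic curve $B$, and since $\omega_B\cong\mathcal{O}_B$ we have $f_*\omega_S^{\otimes 2}=f_*\omega_{S/B}^{\otimes 2}=V_2$ in the notation of the Catanese--Pignatelli structure theorem. Relative duality gives $R^1f_*\omega_{S/B}^{\otimes 2}=0$, so $\deg V_2=\chi(V_2)=\chi(2K_S)=1+K^2=4$, and $V_2$ has rank $3$; thus $V_2$ is a rank $3$, degree $4$ bundle on $B$ and $\nu_2\in\{1,2,3\}$. The heart of the argument is to realize the value $\nu_2=2$, i.e. to produce a surface with $V_2\cong W\oplus L$, where $W$ is an indecomposable rank $2$ bundle and $L$ a line bundle. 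I would do this by running the structure theorem in reverse: fix $B$ and the rank $2$ bundle $V_1=f_*\omega_{S/B}$ of degree $1$, prescribe $V_2=W\oplus L$ of the desired Atiyah type, choose the multiplication map $\mathrm{Sym}^2V_1\to V_2$ (whose cokernel has length $\deg V_2-\deg\mathrm{Sym}^2V_1=1$) so that its image cuts out a conic bundle $\mathcal{C}\subset\mathbb{P}(V_2)$, and finally choose the relative branch divisor of the hyperelliptic double cover $S\to\mathcal{C}$ so that $S$ is smooth, of general type, with $K^2=3$ and $p_g=q=1$.

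I would then locate these surfaces among the three components of \cite{CP06} and verify that the general surface of the relevant component has $\nu_2\neq 2$. Since $\gcd(3,4)=1$, a general rank $3$ degree $4$ bundle on an elliptic curve is stable, hence indecomposable, so absent further constraints one expects the general member to satisfy $\nu_2=1$; depending on how the inclusion $\mathrm{Sym}^2V_1\hookrightarrow V_2$ restricts the admissible Atiyah types, the relevant general value may instead be the completely split $\nu_2=3$ (with my examples arising by degenerating two line-bundle summands into the non-split extension $W$). In either case the general value differs from $2$, so $\nu_2$ is non-constant on a single connected component, proving it is not a deformation invariant and therefore not a topological invariant.

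The main obstacle is the explicit construction in the middle step: I must verify that the branch datum can be chosen so that $S$ is smooth and genuinely of general type (no rational double points forcing a different minimal model) while simultaneously keeping $V_2$ decomposable with exactly two summands --- the splitting of $V_2$ is a closed, non-generic condition, so maintaining it together with the open smoothness condition is delicate. The essential secondary point is to read off, from the fine structure of the $K^2=3$ classification in \cite{CP06}, the decomposition type of $V_2$ for the general member of the component containing my examples; this is where understanding how $\mathrm{Sym}^2V_1\hookrightarrow V_2$ constrains the Atiyah type does the real work.
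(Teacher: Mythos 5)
Your strategy coincides with the paper's: it proves the theorem by showing $\mathcal{M}_{II}^{3,2}\neq\emptyset$ --- running the Catanese--Pignatelli structure theorem in reverse to build a genus-$2$ Albanese fibration with $p_g=q=1$, $K^2=3$ and $V_2=E_{[0]}(2,1)(0)\oplus\mathcal{O}_B(\tau)$ --- and then invoking the fact from \cite{CP06} that the three connected components of $\mathcal{M}_{1,1}^{3,2}$ are $\overline{\mathcal{M}_I^{3,2}}$ and two components inside $\overline{\mathcal{M}_{III}^{3,2}}$ (so no component has general member with $\nu_2=2$), exactly as you propose. The step you flag as the main obstacle is indeed where all the work lies: the paper realizes the decomposable $V_2$ by passing to an unramified double cover $\tilde B\to B$ on which the pullbacks of $E_{[0]}(2,1)$ and the $2$-torsion bundle split, writing the conic bundle $a_1^2y_1^2+a_2^2y_2^2+a_3y_3^2=0$ and the invariant relative cubic cutting the branch divisor in explicit coordinates, and verifying smoothness of the branch curve by a Jacobian computation before descending to $B$.
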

The idea is to show that $\mathcal{M}_{II}^{3,2}$ is nonempty, where $\mathcal{M}_{II}^{3,2}$  is the subspace of $\mathcal{M}_{1,1}^{3,2}$ corresponding to surfaces with  $\nu_2=2$ (see \cite{CP06} Definition 6.11). Since Catanese-Pignatelli  (\cite{CP06} Proposition 6.15) showed that $\mathcal{M}_{II}^{3,2}$  cannot contain any irreducible component of $\mathcal{M}_{1,1}^{3,2}$,  this implies  that surfaces with $\nu_2=2$ can be deformed to surfaces with $\nu_2=1$ or $\nu_2=3$. Therefore $\nu_2$ is not a deformation invariant.

\vspace{3ex}
$\mathbf{Notation~  and ~ conventions.}$
Throughout this paper we work over the field $\mathbb{C}$ of complex numbers. We  denote by $S$ a minimal  surface of general type with $p_g=q=1$,  $S'$ the canonical model of $S$ and  $X$
 the Del Pezzo surface of degree 5.

 We denote by $\Omega_S$    the sheaf of holomorphic 1-forms on $S$,  by $T_S:=\mathcal{H}om_{\mathcal{O}_S}(\Omega_S,\mathcal{O}_S)$   the tangent sheaf  of $S$ and    by  $\omega_S:=\wedge^2\Omega_S$    the sheaf of holomorphic 2-forms on $S$. $K_S$ (simply  $K$ if   no confusion) is  the canonical divisor of $S$, i.e.  $\omega_S\cong \mathcal{O}_S(K_S)$.  $p_g:=h^0(\omega_S),  q:=h^0(\Omega_S)$.   $f: S\rightarrow B:=Alb(S)$ is  the Albanese fibration  of $S$ and  $g$ is the genus of the Albanese  fibres. $V_n:=f_*\omega_S^{\otimes n}$.
 
  For an elliptic curve $B$ and a point $p\in B$, we denote   by $E_p(r,1)$  the unique indecomposable  vector bundle of rank $r$ over $B$ with determinant $\det(E_p(r,1))\cong  \mathcal{O}_B(p)$ (see Atiyah   \cite{Ati57} Theorem 7).

We denote by `$\equiv$' the linear equivalence for divisors. 

$\mathcal{M}_{1,1}^{K^2,2}$   denotes  the Gieseker moduli space of  minimal surfaces of general type  with $p_g=q=1, g=2$ and fixed $K^2$.

\section{The two families constructed by Catanese}
In this section, we show that a general surface with  $p_g=q=1, K^2=5,g=2$ in each of the two families  constructed by Catanese (\cite{Cat98} Example 8, case I and case II) is a  smooth bidouble cover of the  Del Pezzo surface $X$ of degree 5. Moreover,  we prove that the two families are equivalent  up to  an automorphism of $X$, which is induced by a Cremona transformation of $\mathbb{P}^2$.

\vspace{3ex}
Recall that the surfaces constructed by Catanese are obtained by desingularization of  bidoubles covers over $\mathbb{P}^2$ with branch curves $(A,B,C)$   ({\em in this section we use $B$ for one of the branch curve, but  in the following sections, $B$ always denotes  the image of the  Albanese map of $S$}).   Let $P_1,P_2,P_3,P_4$ be four points   in general position (i.e. no three points are collinear)  in $\mathbb{P}^2$, then  $A=A_1+A_2+A_3$,  where $A_i$ is the  line passing through $P_4$ and $P_i$;  $B$ consists of a triangle $B_1+B_2+B_3$ with vertices  $P_1,P_2,P_3$ and a conic $B'$ passing through   $P_1,P_2,P_3$;  $C$ is a line.

In case I, $P_4$ does not belong to $B'$  and $C$ is a general line passing through $P_4$;

In case II, $P_4$ belongs to $B'$ and  $C$ goes through none  of the intersection points of $B$ with $A$.

Note that in both cases, the branch curves  $(A,B,C)$ have  the same degrees $(3,5,1)$ and the four points $P_1,P_2,P_3,P_4$ are singularities of type $(0,1,3)$\footnotemark.
\footnotetext{This means that the respective multiplicities of the three branch curves   at the point are $(0,1,3)$.} As Catanese showed, a general surface in each family is a minimal algebraic surface with $p_g=q=1,K^2=5$ and a genus 2 Albanese fibration.

\begin{lemma}
	\label{both are smooth bidouble cover}
	A general surface $S_1$ (resp. $S_2$) in \cite{Cat98} Example 8 case I (resp. case II) is a smooth bidouble cover of the Del Pezzo surface of degree 5.
\end{lemma}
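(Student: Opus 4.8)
The plan is to exhibit a general member of each family as the smooth $(\mathbb{Z}/2)^2$-cover produced by canonically resolving Catanese's singular bidouble cover of $\mathbb{P}^2$, and to recognize the resolved base as the Del Pezzo surface $X$ of degree $5$. Write the branch data on $\mathbb{P}^2$ as $(D_1,D_2,D_3)=(A,B,C)$ of degrees $(3,5,1)$, with line bundles $L_1\equiv 3h$, $L_2\equiv 2h$, $L_3\equiv 4h$ satisfying $2L_i\equiv D_j+D_k$. The only failure of smoothness is over $P_1,\dots,P_4$, where $(A,B,C)$ has multiplicities of type $(0,1,3)$: concretely $(3,0,1)$ at $P_4$ and $(1,3,0)$ at $P_1,P_2,P_3$ in case I. Each multiplicity-$3$ locus is an ordinary triple point with three distinct tangent directions (the three lines through $P_4$ for $A$; the two triangle sides and the conic $B'$ for $B$ at $P_i$), so a single blow-up separates its branches. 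Since the $P_i$ are in general position, blowing them up yields exactly the degree-$5$ Del Pezzo surface $X$, with $\mathrm{Pic}(X)=\langle h,E_1,\dots,E_4\rangle$, and the strict transforms $\tilde A,\tilde B,\tilde C$ become disjoint unions of smooth rational curves.

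First I would push the branch data up to $X$. The strict transforms alone do not give integral cover bundles, so I must correct them by exceptional curves: imposing that $2\bar L_i\equiv\bar D_j+\bar D_k$ stay even forces, at each $P_p$, a parity equation on the coefficient of $E_p$ whose essentially unique solution adds $E_4$ to $\bar D_B$ and each of $E_1,E_2,E_3$ to $\bar D_C$. This gives $\bar D_A\equiv 3h-3E_4-E_1-E_2-E_3$, $\bar D_B\equiv 5h-3E_1-3E_2-3E_3+E_4$, $\bar D_C\equiv h+E_1+E_2+E_3-E_4$, with the now integral $\bar L_1\equiv 3h-\sum_{i\le3}E_i$, $\bar L_2\equiv 2h-2E_4$, $\bar L_3\equiv 4h-2\sum_{i\le3}E_i-E_4$. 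By the canonical-resolution formalism for bidouble covers (\cite{Cat84}), the cover $S\to X$ defined by $(\bar D_\bullet,\bar L_\bullet)$ is precisely Catanese's desingularized surface, so it suffices to prove that this $S$ is smooth. As a consistency check, $\sum\bar D_i\equiv -3K_X$ gives $K_S\equiv\pi^*\!\left(-\tfrac12 K_X\right)$, which is nef and big because $-K_X$ is ample; hence $S$ is minimal with $K_S^2=5$, matching Catanese.

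Smoothness I would then read off from the local $(\mathbb{Z}/2)^2$-model (\cite{Cat84}): $S$ is smooth provided each $\bar D_i$ is smooth, the three meet pairwise transversally, and $\bar D_A\cap\bar D_B\cap\bar D_C=\emptyset$. Smoothness of each $\bar D_i$ and pairwise transversality reduce to intersection numbers on $X$ that vanish exactly because the blow-up separates the branches meeting at the $P_i$ (for instance $\tilde A_i\cdot\tilde A_j=0$, and $E_4$ is disjoint from the rest of $\bar D_B$). The real content, and the step I expect to be the main obstacle, is the no-triple-point condition along the exceptional curves: on $E_4$ (a component of $\bar D_B$) the curve $\bar D_A$ meets in the three points cut out by the directions to $P_1,P_2,P_3$ while $\bar D_C$ meets in the single point cut out by the direction of $C$, and on each $E_i$ (a component of $\bar D_C$) the curve $\bar D_A$ meets in the direction to $P_4$ while $\bar D_B$ meets in the directions to the other two vertices and along the tangent to $B'$. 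These direction-points are distinct precisely when $C$ is not one of the lines $P_iP_4$ and the conic $B'$ is not tangent at any $P_i$ to the line $P_iP_4$; off the exceptional curves there are no common points at all, since in case I the lines $A_i$ and $C$ all pass through $P_4$, so $A\cap C=\{P_4\}$. Both requirements are open and hold for a general member, which is exactly where the general position of $P_1,\dots,P_4$ and the genericity of $B'$ and $C$ enter. Case II is entirely parallel: only the multiplicities at $P_4$ change to $(3,1,0)$, so the parity computation absorbs $E_4$ into $\bar D_C$ instead, and the same distinctness-of-directions argument, now using the case II hypothesis that $C$ avoids the points of $A\cap B$, yields the empty triple intersection and hence smoothness.
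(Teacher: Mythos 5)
Your proposal is correct and follows essentially the same route as the paper: blow up the four points to obtain the degree-$5$ Del Pezzo surface $X$, transfer the branch data to the divisors $D_1\equiv 3L-E_1-E_2-E_3-3E_4$, $D_2\equiv 5L-3E_1-3E_2-3E_3+E_4$, $D_3\equiv L+E_1+E_2+E_3-E_4$ with the same $L_1,L_2,L_3$, and apply Catanese's criterion for smooth bidouble covers. The only difference is that you spell out the parity bookkeeping and the genericity (normal-crossings) conditions that the paper dispatches with ``It is easy to see'' and ``One checks easily.''
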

\begin{proof}
	
	Let $\sigma: X\rightarrow \mathbb{P}^2$  be the	blowing up of $\mathbb{P}^2$ at the four points $\{P_i\}_{i=1}^4$.   Then $X$ is the Del Pezzo surface of degree 5   since the four points are a projective basis of $\mathbb{P}^2$.
	
 Denote by $L$ the pull back of a line $l$ in $\mathbb{P}^2$ via $\sigma$. Denote by $E_i$ (resp. $E_i'$) the  exceptional curve lying over $P_i$ $(i=1,2,3,4)$ in case I (resp. in case II), $L_{ij}$ (resp. $L_{ij}'$)  the strict transform of the line $l_{ij}$ passing through $P_i,P_j$ $(i,j\in \{1,2,3,4\}; i\neq j)$ in case I (resp. in case II).  Denote by $C_1$ (resp. $C_2)$ the strict transform of the  line $C$  in case I (resp. case II), and denote by $Q_1$ ( resp. $Q_2)$ the  strict transform of the conic $B'$ contained in the divisor $B$ in case I (resp. case II).
	
In case I,  let  $ D_1=L_{14}+L_{24}+L_{34}, D_2=Q_1+L_{12}+L_{23}+L_{13}+E_{4}, D_3=C_1+E_{1}+E_{2}+E_{3}$ and     $L_1\equiv 3L-E_1-E_2-E_3, L_2\equiv2L-2E_4, L_3\equiv4L-2E_1-2E_2-2E_3-E_4 $. It is easy to see that $2L_i\equiv D_j+D_k$ and $D_k+L_k\equiv L_i+L_j$ for $\{i,j,k\}=\{1,2,3\}$. Moreover,	$D:=D_1\cup D_2\cup D_3$ has normal crossings.  Hence  the effective divisors $D_1,D_2,D_3$ and divisors $L_1,L_2,L_3$ determine a smooth bidouble cover $\pi^1: \hat{S}_1\rightarrow X$. (cf. \cite{Cat84} Proposition 2.3) One checks easily that $\hat{S}_1=S_1$.

	Similarly, in case II, let $D'_1=L'_{14}+L'_{24}+L'_{34},$
	$ D'_2=Q_2+L'_{12}+L'_{13}+L'_{23},$
	$ D'_3=C_2+E'_1+E'_2+E'_3+E'_4$ and  $L'_1\equiv 3L-E'_1-E'_2-E'_3$,   $L'_2\equiv 2L-E'_4$, $L'_3\equiv 4L-2E'_1-2E'_2-2E'_3-2E'_4$.  Then the effective divisors $D'_1,D'_2,D'_3$ and divisors $L'_1,L'_2,L'_3$ determine a smooth bidouble cover  $\pi^2: \hat{S}_2\rightarrow X$. Moreover $\hat{S}_2=S_2$.
\end{proof}

Now we study the transform of  branch curves under a  suitable   Cremona transformation of  $\mathbb{P}^2$.  Denote by $l_{ij}$ $(1\leq i\neq j\leq 4)$ the line passing through $P_i,P_j$.   By abuse of notation, we still denote by $C_1$ (resp. $C_2)$ for the line $C$ in case I (resp. case II), and by  $B_1'$ (resp. $B_2')$ for the conic contained in $B$ in case I (resp. case II).

Since $\{P_i\}_{i=1}^4$ are a projective basis of $\mathbb{P}^2$, we can find  a coordinate system $(x:y:z)$ on $\mathbb{P}^2$ such that  $P_1=(1:0:0), P_2=(0:1:0), P_3=(0:0:1), P_4=(1:1:1)$. Then $l_{23}=\{ x=0\}$, $l_{13}=\{ y=0\}$, $l_{12}=\{ z=0\}$,   $C_1=\{a_1x+a_2y+a_3z=0| a_1\neq 0, a_2\neq0, a_3\neq0, a_1+a_2+a_3=0\}$ and  $B_1'=\{b_1yz+b_2xz+b_3xy=0| b_1\neq 0, b_2\neq0, b_3\neq0,   b_1+b_2+b_3\neq0\}$,

Let $\phi: \mathbb{P}^2\dashrightarrow  \mathbb{P}^2$ be the   Cremona transformation  such that
$\phi: (x:y:z)\mapsto (yz:xz:xy)$.   Then $\phi:$
$P_i \mapsto  l_{jk}$, $ l_{jk} \mapsto P_i$, ($\{i,j,k\}=\{1,2,3\}$);
$P_4\mapsto P_4$.
Note that   $\phi^{-1}(C_1)=\{ a_1yz+a_2xz+a_3xy=0| a_1\neq 0, a_2\neq0, a_3\neq0, a_1+a_2+a_3=0\}$    is a smooth conic containing $P_1, P_2, P_3$ and $P_4$, which is exactly $B_2'$;   $\phi^{-1}(B_1')=\{ b_1x'+b_2y'+b_3z'=0 \}| b_1\neq 0, b_2\neq0, b_3\neq0, b_1+b_2+b_3\neq0\}$ is a line containing none of the four points $P_i (i=1,2,3,4)$, which is exactly $C_2$. Hence under  $\phi$, $C_2\mapsto B_1'$, $B_2'\mapsto C_1$.

Note that  $\phi$ induces a holomorphic automorphism $\Phi$ on   $X$ and   $\Phi$ acts as : $L_{ij} \mapsto E'_{k}$, $E_{k} \mapsto L'_{ij}$ for $\{i,j,k\}=\{1,2,3\}$; $C_1 \mapsto Q_2$, $Q_1 \mapsto C_2$; and $E_4\mapsto E'_4$, $L_{i4} \mapsto L'_{i4}$ for $i \in \{1,2,3\}$.  So  $\Phi(D_1,D_2,D_3)=(D_1',D_3',D_2')$.
Therefore,  we have  the following:

\begin{pro}	
	\label{equivalent of two families}
	The two families of algebraic surfaces   in \cite{Cat98} Example 8 case I and case II are equivalent up to    an automorphism of $X$.
\end{pro}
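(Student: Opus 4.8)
The plan is to upgrade the statement, which the preceding discussion already establishes at the level of the three branch curves, to a statement about the \emph{full} building data of the two bidouble covers, and then to appeal to the structure theory of smooth bidouble covers to conclude that the covers themselves, and hence the two families, are interchanged by $\Phi$.

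First I would recall from \cite{Cat84} that a smooth bidouble cover $\pi\colon Y\to X$ with group $(\mathbb{Z}/2)^2$ is determined, up to isomorphism over $X$, by its building data: the three effective branch divisors $D_1,D_2,D_3$ together with three line bundles $L_1,L_2,L_3$ subject to $2L_i\equiv D_j+D_k$. The key structural point is that a simultaneous permutation $\sigma\in S_3$ of the indices, sending $(D_i,L_i)$ to $(D_{\sigma(i)},L_{\sigma(i)})$, produces an \emph{isomorphic} cover, since such a relabelling is realized by an automorphism of the Galois group via $\mathrm{Aut}((\mathbb{Z}/2)^2)\cong S_3$, which acts as the full symmetric group on the three nontrivial characters.

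The paragraph preceding the proposition already shows, at the level of the actual curves on $X$, that $\Phi(D_1,D_2,D_3)=(D_1',D_3',D_2')$, i.e. the branch divisors correspond under the index transposition $(2\,3)$. It remains only to verify that the line bundle data is carried along by the \emph{same} permutation. For this I would read off the induced action $\Phi_*$ on $\mathrm{Pic}(X)$ from the geometric description of $\Phi$ recalled above: since $\Phi$ sends $E_k$ to the strict transform $L'_{ij}\equiv L-E'_i-E'_j$ and fixes $E_4$, one obtains $\Phi_*(L)=2L-E'_1-E'_2-E'_3$, $\Phi_*(E_k)=L-E'_i-E'_j$ for $\{i,j,k\}=\{1,2,3\}$, and $\Phi_*(E_4)=E'_4$. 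A direct substitution into the expressions $L_1\equiv 3L-E_1-E_2-E_3$, $L_2\equiv 2L-2E_4$, $L_3\equiv 4L-2E_1-2E_2-2E_3-E_4$ then yields $\Phi_*(L_1)=L'_1$, $\Phi_*(L_2)=L'_3$, $\Phi_*(L_3)=L'_2$, matching the transposition $(2\,3)$ of the $D_i$ exactly.

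Combining these, $\Phi$ carries the complete building data of the case-I cover $S_1\to X$ onto that of the case-II cover $S_2\to X$, up to the index transposition $(2\,3)$ which, by the first step, is harmless up to isomorphism; moreover this holds uniformly in the parameters, since $\phi$ already matches the defining parameters of the conic and the line in the two cases. Hence the two families are equivalent via the single automorphism $\Phi$ of $X$. The genuine content beyond the preceding discussion is thus the $\mathrm{Pic}(X)$ bookkeeping together with the $\mathrm{Aut}((\mathbb{Z}/2)^2)\cong S_3$ observation, and I expect the latter to be the point most easily underestimated: the apparent mismatch $D_2\mapsto D_3'$, $D_3\mapsto D_2'$ would look like an obstruction were it not absorbed by relabelling the three nontrivial characters of the Galois group.
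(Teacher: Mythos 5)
Your proposal is correct and follows the same route as the paper, whose entire argument is the preceding computation that the Cremona transformation induces an automorphism $\Phi$ of $X$ with $\Phi(D_1,D_2,D_3)=(D_1',D_3',D_2')$. The two points you add explicitly --- that the index transposition $(2\,3)$ is absorbed by an automorphism of the Galois group $(\mathbb{Z}/2\mathbb{Z})^2$, and that $\Phi_*(L_1,L_2,L_3)=(L_1',L_3',L_2')$ --- are left implicit in the paper (the latter also follows at once from the fact, used later in the paper, that $\mathrm{Pic}(X)$ has no $2$-torsion, so the $L_i$ are determined by the $D_i$), and your $\mathrm{Pic}(X)$ computations check out.
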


Since the two families   in \cite{Cat98} Example 8  are equivalent, we only need to study one of them. From now on, we focus on the family in case I. Considering Lemma \ref{both are smooth bidouble cover}, we give the following definition and notation:

\begin{defn}
	\label{definition of M}
	We denote by $M$  the family of minimal surfaces  in  \cite{Cat98} Example 8 case I.  Denote by   $\mathcal{M}$ the image of $M$ in $\mathcal{M}^{5,2}_{1,1}$ and by $\overline{\mathcal{M}}$  the Zariski closure of $\mathcal{M}$ in $\mathcal{M}^{5,2}_{1,1}$.
\end{defn}

From the construction of the family $M$, it is easy to  calculate the dimension of $\mathcal{M}$.
\begin{lemma}
	\label{dimM 1}
	$\mathcal{M}$ is  a 3-dimensional  irreducible subset  of $\mathcal{M}^{5,2}_{1,1}$.
\end{lemma}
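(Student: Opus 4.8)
The plan is to exhibit $\mathcal{M}$ as the image of an explicit irreducible parameter variety under the classifying map into $\mathcal{M}^{5,2}_{1,1}$, and then to read off both properties by a parameter count modulo projective equivalence. By Lemma \ref{both are smooth bidouble cover}, every surface in $M$ is the smooth bidouble cover of $X$ determined by branch data $(A,B,C)$ on $\mathbb{P}^2$, and in this data the triangle $B_1+B_2+B_3$ and the lines $A_i$ are forced by the four points, so the only genuinely free pieces are the points themselves, the conic $B'$, and the line $C$. Thus $M$ is parametrized by the configuration variety $\mathcal{P}$ of tuples $(P_1,P_2,P_3,P_4,B',C)$ with the $P_i$ in general position, $B'$ a conic through $P_1,P_2,P_3$ but not $P_4$, and $C$ a line through $P_4$. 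First I would note that $\mathcal{P}$ is irreducible: it is a Zariski-open subset of the product of an open subset of $(\mathbb{P}^2)^4$, the linear system of conics through three fixed points (a $\mathbb{P}^2$), and the pencil of lines through a fixed point (a $\mathbb{P}^1$), each factor being irreducible. Since $\mathcal{M}$ is the image of $\mathcal{P}$ under a morphism, it is automatically irreducible.

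For the dimension I would count $\dim \mathcal{P} = 8 + 2 + 1 = 11$: eight for the four points, two for the conic (three independent conditions on the five-dimensional system of conics through $P_1,P_2,P_3$), and one for the line through $P_4$. The group $\mathrm{Aut}(\mathbb{P}^2)=PGL(3,\mathbb{C})$, of dimension $8$, acts on $\mathcal{P}$, and projectively equivalent data yield isomorphic bidouble covers, so the classifying map factors through the quotient; this gives $\dim \mathcal{M} \le 11 - 8 = 3$. Equivalently, since four points in general position constitute a projective frame, I may normalize them to $(1:0:0),(0:1:0),(0:0:1),(1:1:1)$, after which the residual freedom is exactly the choice of conic through $P_1,P_2,P_3$ and of line through $P_4$, a three-dimensional reduced parameter space.

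The main obstacle is the matching lower bound $\dim \mathcal{M} \ge 3$, that is, showing the classifying map from the frame-normalized parameter space has finite generic fibers so that no further collapsing occurs. For this I would argue that the covering structure is intrinsic to a general $S$: the Galois $(\mathbb{Z}/2)^2$-action and the quotient $X$ are recoverable from $S$, so the branch divisors $D_1,D_2,D_3$ on $X$, and hence the conic and line on $\mathbb{P}^2$ after contracting the exceptional curves, are determined up to the finite group $\mathrm{Aut}(X)\cong S_5$ together with the residual $S_3$ permuting $P_1,P_2,P_3$. As these groups are finite, the generic fibers of the classifying map are finite, and combined with the upper bound this forces $\dim \mathcal{M}=3$. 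Should a full reconstruction argument be cumbersome here, a lighter alternative suffices: it is enough to verify that the two continuous moduli (the conic and the line, with the frame fixed) actually vary the isomorphism class, for instance by tracking an invariant of the branch configuration that moves in the family, which again pins the dimension at $3$.
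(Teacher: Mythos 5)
Your proposal is correct and follows essentially the same route as the paper: normalize the four points by the $PGL(3)$-action (the paper records this as ``no parameter for $\{P_1,\dots,P_4\}$''), count $2+1=3$ parameters for the conic $B'$ and the line $C$, and then obtain finiteness of the generic fibres of the classifying map from the finiteness of $\mathrm{Aut}(S)$ (only finitely many $(\mathbb{Z}/2\mathbb{Z})^2$-actions, hence bidouble-cover structures, on a fixed $S$ of general type) together with $\mathrm{Aut}(X)\cong\mathfrak{S}_5$. The only cosmetic difference is that the paper phrases the fibre-finiteness via finitely many choices of the inclusion $\psi:(\mathbb{Z}/2\mathbb{Z})^2\hookrightarrow \mathrm{Aut}(S)$ rather than claiming the Galois action is ``recoverable,'' which is the cleaner way to state what you intend.
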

\begin{proof}
	$M$ is a 3-parameter irreducible family: no parameter for $\{P_1,P_2,P_3,P_4\}$,   no parameter for $A=A_1+A_2+A_3$ and the triangle $B_1+B_2+B_3$ (since they are determined by $\{P_1,P_2,P_3,P_4\}$),  2 parameters for the conic $B'$ passing though $P_1, P_2, P_3$ and
	1 parameter for the line $C$ passing though $P_4$.
	
	$M$ gives a family of surfaces $S$ endowed with an inclusion $\psi: (\mathbb{Z}/2\mathbb{Z})^2\hookrightarrow Aut(S)$, which determines the bidouble cover $\pi: S\rightarrow X$. Since $Aut(S)$ is a finite group, for a fixed $S$, there are only finite  choices for $\psi$. On the other hand,  there is a biholomorphism $h: (S_1,\psi_1)\xrightarrow{\sim} (S_2,\psi_2)$ if and only if there is a biholomorphic automorphism $h'$ of  $X$  such that the following diagram
	$$\xymatrix{S_1\ar[r]^h \ar[d]^{\pi_1} &S_2\ar[d]^{\pi_2}\\
		X\ar[r]^{h'} &X
	}$$
	commutes. Since  $Aut(X)$ is isomorphic to the symmetric group $\mathfrak{S}_5$ (cf. e.g. \cite{Dol12} Theorem 8.5.8 or  \cite{Cat16} Theorem 67), which is a finite group, we see that   there are only finitely many surfaces in $M$ isomorphic to $S$.
	Therefore, $\mathcal{M}$ is  a 3-dimensional  irreducible subset  of $\mathcal{M}^{5,2}_{1,1}$.
\end{proof}

\section{   $\overline{\mathcal{M}}$ is an irreducible component of $\mathcal{M}^{5,2}_{1,1}$}
Let $S$ be a general surface in $M$.
In this section, we  calculate $h^1(T_S)$ and show that $\overline{\mathcal{M}}$ is an irreducible component of $\mathcal{M}^{5,2}_{1,1}$.  

We use notation in  section 2. Let $\sigma: X\rightarrow \mathbb{P}^2$ be the	blowing up of $\mathbb{P}^2$ at the four   points $P_1,P_2,P_3,P_4$  in general position. Denote by $E_i$ the  exceptional curve lying over $P_i$ $(i=1,2,3,4)$, $L$ the pull back of a line $l$ in $\mathbb{P}^2$ via $\sigma$, $L_{ij}$ the strict transform of the line $l_{ij}$ passing through $P_i,P_j$ $(i,j\in \{1,2,3,4\}; i\neq j)$, $C$ the strict transform of a  line $l_4$ passing through $P_4$, and $Q$ the strict transform of a conic $\bar{Q}$ passing though $P_1, P_2, P_3$.

By Lemma \ref{both are smooth bidouble cover},  $S$ is  a smooth bidouble of $X$ (which we denote by $\pi$)    determined by effective divisors    $ D_1=L_{14}+L_{24}+L_{34}, D_2=Q+L_{12}+L_{23}+L_{13}+E_{4}, D_3=C+E_{1}+E_{2}+E_{3}$ and divisors    $L_1\equiv 3L-E_1-E_2-E_3, L_2\equiv2L-2E_4, L_3\equiv4L-2E_1-2E_2-2E_3-E_4 $. Since $K_X\equiv -3L+E_1+E_2+E_3+E_4$, we see    $K_X+L_1\equiv E_4, K_X+L_2\equiv -L+E_1+E_2+E_3-E_4\equiv -L_{12}+E_3-E_4, K_X+L_3\equiv L-E_1-E_2-E_3\equiv  L_{12}-E_3.$

Since $H^0(T_S)=0$, by  Riemann-Roch, we have  $-\chi(T_S)=h^1(T_S)-h^2(T_S)=10\chi(\mathcal{O}_S)-2K^2=0$. Hence $h^1(T_S)=h^2(T_S)=h^0(\Omega_S\otimes \omega_S)$ by Serre duality. By \cite{Cat84} Theorem 2.16, we have $$H^0(\Omega_S\otimes \omega_S)\cong H^0(\pi_*(\Omega_S\otimes \omega_S))$$
$$=H^0(\Omega_X(logD_1, logD_2, logD_3)\otimes\omega_X)\oplus(\bigoplus_{i=1}^3 H^0(\Omega_X(logD_i) (K_X+L_i)).$$

To calculate $h^1(T_S)$, it suffices to calculate $h^0(\Omega_X(logD_1, logD_2, logD_3)\otimes\omega_X)$ and $h^0(\Omega_X(logD_i) (K_X+L_i))$ $(i=1,2,3)$.
The first one is easy to calculate:
\begin{lemma}
	\label{0 term}
	$H^0(\Omega_X(logD_1, logD_2, logD_3)\otimes\omega_X)=0.$
\end{lemma}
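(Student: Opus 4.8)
The plan is to compute $H^0(\Omega_X(\log D_1,\log D_2,\log D_3)\otimes\omega_X)$ directly using the logarithmic residue sequence on the Del Pezzo surface $X$. The sheaf $\Omega_X(\log D_1,\log D_2,\log D_3)$ is the sheaf of $1$-forms with logarithmic poles along the total branch divisor $D=D_1\cup D_2\cup D_3$, which (since $D$ has normal crossings by Lemma \ref{both are smooth bidouble cover}) fits into a residue exact sequence
\begin{equation*}
0\longrightarrow \Omega_X \longrightarrow \Omega_X(\log D)\longrightarrow \bigoplus_{\Gamma}\mathcal{O}_\Gamma\longrightarrow 0,
\end{equation*}
where $\Gamma$ runs over the irreducible components of $D$. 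Tensoring with $\omega_X$ keeps the sequence exact (it is locally free), so I would pass to the long exact sequence in cohomology and try to kill the $H^0$ term by showing both the left and the right contributions vanish.

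First I would handle the left-hand term $H^0(\Omega_X\otimes\omega_X)$. Since $X$ is a rational surface, $H^0(\Omega_X)=0=h^{1,0}$, and $\omega_X=\mathcal{O}_X(K_X)$ with $K_X$ anti-ample (as $X$ is Del Pezzo) being a negative bundle; I expect $H^0(\Omega_X\otimes\omega_X)=H^0(\Omega_X(K_X))$ to vanish because twisting the already-sectionless $\Omega_X$ by a negative line bundle cannot create sections. This can be made rigorous either by Serre duality, rewriting it as $H^2(T_X(-K_X))^\vee=H^2(T_X\otimes\omega_X^{-1})^\vee$ and noting the dual object is supported in the wrong degree, or more concretely by using the known cohomology of $\Omega_X$ on a degree-$5$ Del Pezzo surface. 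Next I would handle the right-hand contribution $H^0\bigl(\bigoplus_\Gamma \omega_X|_\Gamma\bigr)=\bigoplus_\Gamma H^0(\Gamma,\omega_X|_\Gamma)$. Each component $\Gamma$ of $D$ is a smooth rational curve (the $L_{ij}$, $E_i$, $C$, $Q$ are all $(-1)$- or $(-2)$- or low-degree rational curves on $X$), so $\Gamma\cong\mathbb{P}^1$, and $\omega_X|_\Gamma$ is a line bundle on $\mathbb{P}^1$ whose degree is $(K_X\cdot\Gamma)$. Since $K_X$ is anti-ample on the Del Pezzo surface, $(K_X\cdot\Gamma)<0$ for every curve $\Gamma$, hence $\omega_X|_\Gamma$ has negative degree and $H^0(\Gamma,\omega_X|_\Gamma)=0$ for each component. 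With both ends of the long exact sequence vanishing, the middle term $H^0(\Omega_X(\log D)\otimes\omega_X)$ is squeezed to zero.

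The step I expect to be the main obstacle is the clean vanishing of the left-hand term $H^0(\Omega_X\otimes\omega_X)=H^0(\Omega_X(K_X))$: unlike the right-hand term, which reduces to a transparent degree computation on rational curves, this term requires genuine information about the cohomology of the cotangent bundle of $X$ twisted by a negative bundle, and one must be careful that the map in the residue sequence does not produce an unexpected cokernel contribution in $H^0$. I would secure it by invoking Serre duality together with the anti-ampleness of $-K_X$: $h^0(\Omega_X(K_X))=h^2(T_X(-2K_X))$, and since $-2K_X$ is ample with $T_X$ a vector bundle on a rational surface, Kodaira-type vanishing (or a direct Bott-style computation on the degree-$5$ Del Pezzo, realized as a blow-up of $\mathbb{P}^2$ at four general points) forces this $H^2$ to vanish. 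Alternatively, because $X$ is an explicit blow-up, I can compute everything on $\mathbb{P}^2$ by pulling back $\Omega_X$ and tracking the exceptional divisors, which gives a fully elementary verification should the duality argument feel too indirect. Either route closes the lemma.
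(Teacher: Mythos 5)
Your overall strategy coincides with the paper's: the sheaf $\Omega_X(\log D_1,\log D_2,\log D_3)$ is the sheaf of log forms along the normal crossing divisor $D=D_1\cup D_2\cup D_3$, and the paper uses exactly the residue sequence you write (in the form $0\to\Omega_X\otimes\omega_X\to\Omega_X(\log D_1,\log D_2,\log D_3)\otimes\omega_X\to\bigoplus_{i=1}^3\mathcal{O}_{D_i}(K_X)\to 0$, quoted from \cite{Cat84}) and kills both outer terms. Your treatment of the right-hand term is identical to the paper's: every component $\Gamma$ of $D$ is a smooth rational curve with $K_X\cdot\Gamma<0$, so $H^0(\mathcal{O}_\Gamma(K_X))=0$.

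The step that is wrong as written is the Serre duality computation for the left-hand term. For a locally free sheaf $\mathcal{F}$ on a surface, $H^0(\mathcal{F})\cong H^2(\mathcal{F}^\vee\otimes\omega_X)^\vee$, so $H^0(\Omega_X\otimes\omega_X)\cong H^2\bigl(T_X\otimes\omega_X^{-1}\otimes\omega_X\bigr)^\vee=H^2(T_X)^\vee$: the twists cancel. The groups $H^2(T_X(-K_X))$ and $H^2(T_X(-2K_X))$ that you write down are not the duals of $H^0(\Omega_X\otimes\omega_X)$, so establishing their vanishing would not close the argument; and ``Kodaira-type vanishing'' does not apply off the shelf to the rank-two bundle $T_X$ twisted by an ample line bundle. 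What you actually need is $H^2(T_X)=0$, and this is precisely what the paper proves --- by your own proposed fallback route: the blow-up sequence $0\to T_X\to\sigma^*T_{\mathbb{P}^2}\to\bigoplus_{i=1}^4\mathcal{O}_{E_i}(1)\to 0$ together with $h^1(T_{\mathbb{P}^2})=h^2(T_{\mathbb{P}^2})=0$ gives $h^2(T_X)=0$ (and in fact $h^j(\Omega_X\otimes\omega_X)=h^{2-j}(T_X)=0$ for all $j$, using also $h^0(T_X)=\dim\mathrm{Aut}(X)=0$). With that correction substituted for the duality step, your argument becomes the paper's proof.
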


\begin{proof} By Catanese \cite{Cat84}(2.12), we have the following exact sequence
	$$0\rightarrow \Omega_X\otimes \omega_X\rightarrow \Omega_X(logD_1, logD_2, logD_3)\otimes\omega_X\rightarrow \bigoplus_{i=1}^3\mathcal{O}_{D_i}(K_X)\rightarrow 0.$$
	Since $\sigma: X\rightarrow \mathbb{P}^2$ is the blowing up of $\mathbb{P}^2$ at four points, we have  the following exact sequence
	$$0\rightarrow T_X\rightarrow \sigma^*T_{\mathbb{P}^2}\rightarrow \bigoplus_{i=1}^4\mathcal{O}_{E_i}(1)\rightarrow 0.$$
	Since $h^j(\sigma^*T_{\mathbb{P}_2})=h^j(T_{\mathbb{P}_2})$, $h^0(T_{\mathbb{P}_2})= \dim Aut(\mathbb{P}^2)=8$, $h^1(T_{\mathbb{P}_2})=h^2(T_{\mathbb{P}_2})=0$ and $h^0(T_X)=\dim Aut(X)=0$,  we see that
	$H^j(\Omega_X\otimes \omega_X)=H^{2-j}(T_X)=0 (j=0,1,2)$.  Since each $D_i(i=1,2,3)$ is a disjoint union of  rational curves    whose intersection number   with $K_X$  equals -1, -2, or -3, we have $H^0(\mathcal{O}_{D_i}(K_X))=0$, hence  $H^0(\Omega_X(logD_1, logD_2, logD_3)\otimes\omega_X)=0.$
\end{proof}

To compute  $h^0(\Omega_X(logD_i) (K_X+L_i))$ $(i=1,2,3)$, we need the following two lemmas in \cite{BC13}:
\begin{lemma} (\cite{BC13} Lemma 4.3)
	\label{replace lemma}
	Assume that $N$ is a connected component of a smooth divisor $D\subset X$, where $X$ is a smooth projective surface.
	Let $M$ be a divisor on Y. Then $$H^0(\Omega_X(log(D-N))(N+M))=H^0(\Omega_X(log(D))(M))$$
	provided $(K_X+2N+M)N<0$.
\end{lemma}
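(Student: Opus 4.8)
The plan is to realize both spaces of global sections as cohomology groups attached to a single short exact sequence of sheaves on $X$, whose third term is a line bundle supported on $N$, and then to kill that line bundle's sections using the numerical hypothesis. The engine is the comparison of logarithmic sheaves after twisting.

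First I would construct a natural inclusion of sheaves
$$\Omega_X(\log D)(M)\hookrightarrow \Omega_X(\log(D-N))(N+M).$$
Away from $N$ the two log divisors coincide and the twist by $N$ is trivial, so the map is an isomorphism there; hence its cokernel $\mathcal{Q}$ is supported on $N$. Working in local coordinates $(x,y)$ with $N=\{x=0\}$ (and $D-N$ disjoint from $N$, hence invisible near $N$), the sheaf $\Omega_X(\log D)$ is locally generated by $dx/x,\,dy$, while $\Omega_X(\log(D-N))(N)$ is generated by $dx/x,\,dy/x$. The inclusion is then visible from $dy=x\cdot(dy/x)$, and the cokernel is generated locally by the image of $dy/x$, which is annihilated by $x$. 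Thus $\mathcal{Q}$ is a line bundle on $N$.

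The next step is to identify $\mathcal{Q}$ precisely. Since $dx/x$, the log generator in the conormal direction, maps to zero, the cokernel records only the tangential ($\omega_N$) direction together with the polar twist. Concretely, restricting to $N$ and combining the conormal sequence $0\to \mathcal{O}_X(-N)|_N\to \Omega_X|_N\to \omega_N\to 0$ with adjunction $\omega_N\cong (K_X+N)|_N$, one obtains $\mathcal{Q}\cong \omega_N\otimes \mathcal{O}_N(N+M)$, a line bundle on $N$ of degree
$$\deg\mathcal{Q}=(K_X+N)\cdot N+(N+M)\cdot N=(K_X+2N+M)\cdot N.$$
Because $N$ is a connected component of a \emph{smooth} divisor, it is a smooth irreducible curve, so a line bundle of negative degree on $N$ has no nonzero global sections. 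The hypothesis $(K_X+2N+M)\cdot N<0$ therefore yields $H^0(\mathcal{Q})=0$, and the long exact cohomology sequence of
$$0\to \Omega_X(\log D)(M)\to \Omega_X(\log(D-N))(N+M)\to \mathcal{Q}\to 0$$
collapses to the desired isomorphism $H^0(\Omega_X(\log(D-N))(N+M))\cong H^0(\Omega_X(\log D)(M))$.

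I expect the one genuinely delicate point to be the precise identification of the cokernel line bundle $\mathcal{Q}$ and its twist: the bookkeeping with the conormal and adjunction sequences must be done carefully to confirm that the answer is exactly $\omega_N(N+M)$, of degree $(K_X+2N+M)\cdot N$, rather than differing by a factor of $N$. Once that identity is pinned down, the negativity assumption does all the remaining work and the conclusion is immediate.
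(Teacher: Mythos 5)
Your proposal is correct: the exact sequence
$$0\to \Omega_X(\log D)(M)\to \Omega_X(\log(D-N))(N+M)\to \omega_N(N+M)\to 0,$$
with cokernel a line bundle on the smooth connected (hence irreducible) curve $N$ of degree $(K_X+2N+M)\cdot N<0$, immediately gives the claimed equality of $H^0$'s. The paper itself offers no proof of this statement --- it is quoted verbatim from [BC13], Lemma 4.3 --- and your argument is essentially the one given there, so there is nothing to compare beyond noting that your identification of the cokernel as $\omega_N(N+M)$ (rather than something off by a twist by $N$) is exactly the delicate point and you have handled it correctly.
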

We shall use Lemma \ref{replace lemma} several times in the case where $N\cong \mathbb{P}^1$ and $N^2<0$.
\begin{lemma} (\cite{BC13} Lemma 7.1 (3))
	\label{push forward lemma}
	Consider a finite set of distinct linear forms
	$$l_\alpha :=y-c_\alpha x, \alpha\in A$$
	vanishing at the origin in $\mathbb{C}^2.$
	Let $p: Z\rightarrow \mathbb{C}^2$ be the blow up of the origin, let $D_\alpha$ be the strict transform of the line $L_{\alpha}:=\{l_\alpha=0\}$, and let $E$ be the exceptional divisor.
	
	Let $\Omega_{\mathbb{C}^2}^1((dlogl_{\alpha})_{\alpha \in A})$ be the sheaf of rational 1-forms generated by $\Omega_{\mathbb{C}^2}^1$ and by the differential forms $dlogl_{\alpha}$ as an $\mathcal{O}_{\mathbb{C}^2}$-module and define similarly $\Omega_Z^1((logD_\alpha)_{\alpha\in A})$. Then:
	
	$p_*\Omega_Z^1((logD_{\alpha})_{\alpha \in A})=\{\eta\in \Omega_{\mathbb{C}^2}^1((dlogl_\alpha)_{\alpha \in A})| \eta=\Sigma _\alpha g_{\alpha}dlogl_\alpha +\omega, \omega \in \Omega_{\mathbb{C}^2}^1, \Sigma_\alpha g_\alpha (0)=0\}$.
	
\end{lemma}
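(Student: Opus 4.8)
The plan is to reduce the whole statement to an explicit local computation in the two standard affine charts of the blow-up $p$, after first identifying $p_*\Omega_Z^1((\log D_\alpha)_{\alpha\in A})$ with a subsheaf of $\Omega_{\mathbb{C}^2}^1((d\log l_\alpha)_{\alpha\in A})$. Since $p$ is an isomorphism over $\mathbb{C}^2\setminus\{0\}$, a section of the left-hand sheaf over an open $U\ni 0$ is a $1$-form on $p^{-1}(U)$ with logarithmic poles only along the $D_\alpha$ (and \emph{no} pole along $E$), whose restriction to $p^{-1}(U)\setminus E\cong U\setminus\{0\}$ extends to a rational form on $U$ lying in $\Omega_{\mathbb{C}^2}^1((d\log l_\alpha))$; conversely, for $\eta=\sum_\alpha g_\alpha\,d\log l_\alpha+\omega$ with $g_\alpha\in\mathcal{O}_{\mathbb{C}^2}$ and $\omega\in\Omega_{\mathbb{C}^2}^1$, the only question is whether the pullback $p^*\eta$ acquires a pole along $E$ or a non-logarithmic pole along some $D_\alpha$. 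So everything comes down to understanding $p^*(d\log l_\alpha)$.

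The core of the argument is the pullback identity. In the chart $x=u,\ y=uv$, where $E=\{u=0\}$, one has $l_\alpha=u(v-c_\alpha)$, hence
$p^*(d\log l_\alpha)=\dfrac{du}{u}+\dfrac{dv}{v-c_\alpha}$.
Substituting, I would write $p^*\eta=\big(\sum_\alpha g_\alpha\big)\dfrac{du}{u}+\sum_\alpha g_\alpha\dfrac{dv}{v-c_\alpha}+p^*\omega$. The last two groups are harmless: $\dfrac{dv}{v-c_\alpha}$ has a logarithmic pole exactly along $D_\alpha=\{v=c_\alpha\}$ with holomorphic coefficient $g_\alpha$ (the $c_\alpha$ being distinct, the $D_\alpha$ are disjoint here), and $p^*\omega$ is holomorphic. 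Thus the sole obstruction is the term $\big(\sum_\alpha g_\alpha\big)\dfrac{du}{u}$, which has a genuine pole along $E$ unless $\sum_\alpha g_\alpha$ is divisible by $u$. Since $g_\alpha(u,uv)|_{u=0}=g_\alpha(0)$ is constant along $E$, divisibility by $u$ is equivalent to $\sum_\alpha g_\alpha(0)=0$, which is precisely the condition in the statement.

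Next I would run the same computation in the complementary chart $x=st,\ y=t$, where $E=\{t=0\}$: here $l_\alpha=t(1-c_\alpha s)$ and $p^*(d\log l_\alpha)=\dfrac{dt}{t}+\dfrac{d(1-c_\alpha s)}{1-c_\alpha s}$, the point $s=0$ corresponding to the vertical direction, which carries no $D_\alpha$; once more the only possible pole is along $E$, and the condition is again $\sum_\alpha g_\alpha(0)=0$. Finally, to make the right-hand side well defined I would check that $g_\alpha(0)$ is intrinsic to $\eta$: taking the Poincar\'e residue along $L_\alpha$ annihilates $\omega$ as well as every $g_\beta\,d\log l_\beta$ with $\beta\neq\alpha$ (these are holomorphic near a general point of $L_\alpha$), so that $\mathrm{Res}_{L_\alpha}\eta=g_\alpha|_{L_\alpha}$, whose value at the origin is $g_\alpha(0)$. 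Hence $\sum_\alpha g_\alpha(0)$ is independent of the chosen representation, and the two chart-by-chart inclusions together give the asserted equality.

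The step I expect to demand the most care is not any individual calculation but the bookkeeping establishing that vanishing along $E$ is the \emph{only} constraint. One must verify honestly in both charts that the terms other than $\big(\sum_\alpha g_\alpha\big)\tfrac{du}{u}$ never contribute a pole along $E$ and never worsen the poles along the $D_\alpha$ beyond logarithmic, and that the two charts jointly cover all of $p^{-1}(U)$, including the nodes $D_\alpha\cap E$. Once the pullback identity for $d\log l_\alpha$ is secured, each of these checks is routine but should be carried out explicitly so that both inclusions are justified.
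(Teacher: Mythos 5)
The paper does not prove this statement: it is quoted verbatim from \cite{BC13} (Lemma 7.1(3)) and used as a black box, so there is no internal proof to compare against. Your argument is correct and is essentially the standard one (and the one in \cite{BC13}): the identity $p^*(d\log l_\alpha)=\frac{du}{u}+\frac{dv}{v-c_\alpha}$ in the chart $x=u$, $y=uv$ isolates $\bigl(\sum_\alpha g_\alpha\bigr)\frac{du}{u}$ as the only term that can produce a pole along $E$, and since $g_\alpha(u,uv)|_{u=0}=g_\alpha(0)$ the condition for holomorphy along $E$ is exactly $\sum_\alpha g_\alpha(0)=0$; the residue computation makes this condition intrinsic to $\eta$. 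The one step you assert rather than prove is the inclusion $p_*\Omega_Z^1((\log D_\alpha))\subseteq \Omega_{\mathbb{C}^2}^1((d\log l_\alpha))$: given a section $\sigma$ upstairs, you should extend its residues $\mathrm{Res}_{D_\alpha}\sigma$ from $L_\alpha$ to functions $g_\alpha$ on a neighbourhood of $0$ and then apply Hartogs to see that $\sigma-\sum_\alpha g_\alpha\,d\log l_\alpha$ descends to a holomorphic $1$-form across the origin; with that spelled out both inclusions are complete.
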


Now we calculate $h^0(\Omega_X(logD_i)(K_X+L_i))$ $(i=1,2,3)$ using a method of Bauer-Catanese (cf. \cite{BC13} Lemmas 4.3, 4.4, 4.5, 4.6, 7.1).
\begin{lemma}
	\label{1 term}
	$H^0(\Omega_X(logD_1)(K_X+L_1))=0$.
\end{lemma}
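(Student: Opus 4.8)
The plan is to compute $H^0(\Omega_X(\log D_1)(K_X+L_1))$ by simplifying the divisor $D_1 = L_{14}+L_{24}+L_{34}$ one component at a time using Lemma \ref{replace lemma}, thereby trading logarithmic poles for an ordinary twist until the bundle is manifestly without sections. Recall from the setup that $K_X+L_1 \equiv E_4$, so the target sheaf is $\Omega_X(\log D_1)(E_4)$, and that $D_1$ is the disjoint union of the three $(-1)$-curves $L_{14}, L_{24}, L_{34}$, each a $\mathbb{P}^1$ meeting $E_4$ transversally at one point. This disjointness is convenient: each $L_{i4}$ is its own connected component of the smooth divisor $D_1$, so Lemma \ref{replace lemma} applies componentwise with $N = L_{i4}$, $N^2 = -1$.

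First I would strip off $L_{14}$. Taking $N = L_{14}$ and $M$ so that $N+M \equiv K_X+L_1 \equiv E_4$ (i.e. $M \equiv E_4 - L_{14}$), the hypothesis of Lemma \ref{replace lemma} is the numerical inequality $(K_X + 2N + M)\cdot N < 0$, which becomes $(K_X + L_{14} + E_4)\cdot L_{14} < 0$. Using $K_X \cdot L_{14} = -1$ (a $(-1)$-curve), $L_{14}^2 = -1$, and $E_4 \cdot L_{14} = 1$, this intersection number is $-1 < 0$, so the lemma lets me remove $L_{14}$ from the log divisor at the cost of twisting down by $L_{14}$. I would repeat this for $L_{24}$ and $L_{34}$; by the symmetry of the three components and the same numerical check, each removal succeeds, and after all three steps I arrive at an identification of the form
\begin{equation*}
H^0(\Omega_X(\log D_1)(K_X+L_1)) \cong H^0\bigl(\Omega_X(E_4 - L_{14} - L_{24} - L_{34})\bigr).
\end{equation*}
The remaining task is to show this last group vanishes, which should follow because the twisting divisor $E_4 - L_{14}-L_{24}-L_{34}$ is sufficiently negative: $\Omega_X$ has no sections even untwisted (as $h^0(\Omega_X) = q(X) = 0$ for the rational surface $X$), and subtracting an effective, positive-degree divisor can only kill sections, so the vanishing is forced once I verify the twist is effective-subtractive in the relevant sense.

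I expect the main obstacle to be bookkeeping rather than a deep difficulty: I must order the three removals correctly and recompute the divisor $M$ at each stage, since after removing $L_{14}$ the ambient twist changes and the inequality for the next component must be re-verified against the \emph{updated} $M$. The subtlety is that Lemma \ref{replace lemma} is stated for removing one connected component at a time, so the condition $(K_X + 2N + M)\cdot N < 0$ must hold with $M$ equal to the twist remaining \emph{at that step}, not the original one; because $D_1$ is a disjoint union and each $L_{i4}$ meets only $E_4$ (not the other $L_{j4}$), I anticipate the three inequalities decouple and all evaluate to $-1$, but this needs to be checked explicitly. The final vanishing step may alternatively require invoking Lemma \ref{push forward lemma} to handle the normal-crossing behavior at the point $P_4$ where the three lines $l_{i4}$ concur in $\mathbb{P}^2$; if the clean componentwise argument does not immediately terminate, I would push forward to $\mathbb{P}^2$ via $\sigma$ and use the residue description in Lemma \ref{push forward lemma} to identify the global sections with logarithmic forms satisfying a vanishing-of-residue-sum condition, concluding that none exist.
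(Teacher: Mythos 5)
Your reduction applies Lemma \ref{replace lemma} in the wrong direction, and this breaks the whole argument. The lemma states $H^0(\Omega_X(\log(D-N))(N+M))=H^0(\Omega_X(\log D)(M))$: removing the connected component $N$ from the log divisor is compensated by \emph{adding} $N$ to the twist, not subtracting it. Starting from $H^0(\Omega_X(\log D_1)(E_4))$, i.e.\ the right-hand side with $D=D_1$, $M=E_4$, $N=L_{14}$, the lemma yields $H^0(\Omega_X(\log(D_1-L_{14}))(L_{14}+E_4))$ under the condition $(K_X+2L_{14}+E_4)\cdot L_{14}=-2<0$ (note your $-1$ comes from evaluating the condition with your incorrectly chosen $M\equiv E_4-L_{14}$). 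Your version, which lands on $H^0(\Omega_X(E_4-L_{14}-L_{24}-L_{34}))$, identifies the original group with the sections of a strictly \emph{smaller} sheaf; only the inclusion $H^0(\Omega_X(\log(D-N))(M-N))\subseteq H^0(\Omega_X(\log D)(M))$ holds unconditionally, and that inequality goes the useless way. So the final vanishing you prove is for the wrong group, and the lemma is not established.

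It is worth noting that your overall strategy, carried out with the correct sign, does succeed and is in fact shorter than the paper's proof: since $L_{14},L_{24},L_{34}$ are disjoint $(-1)$-curves each meeting $E_4$ once, the three conditions $(K_X+2L_{i4}+M)\cdot L_{i4}$ (with $M$ updated at each step) all equal $-2<0$, and one arrives at
\begin{equation*}
H^0(\Omega_X(\log D_1)(E_4))\cong H^0\bigl(\Omega_X(E_4+L_{14}+L_{24}+L_{34})\bigr)\cong H^0\bigl(T_X\otimes\omega_X(3L-E_1-E_2-E_3-2E_4)\bigr)=H^0(T_X(-E_4))\subseteq H^0(T_X)=0.
\end{equation*}
The paper instead stops after removing only $L_{34}$, obtaining $H^0(\Omega_X(\log(L_{14}+L_{24}))(L-E_3))$, pushes forward to $\mathbb{P}^2$ via Lemma \ref{push forward lemma}, writes a general section of $\Omega_{\mathbb{P}^2}(\log l_{14},\log l_{24})(1)$ explicitly, and kills all coefficients by the linear conditions at $P_1,P_2,P_3,P_4$. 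Your fallback paragraph gestures at that route but does not execute it; as submitted, the proof has a genuine error in its main step.
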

\begin{proof}    By Lemma \ref{replace lemma}, we have
	\begin{align*}
	& \hspace{5mm} 	H^0(\Omega_X(logD_1)(K_X+L_1))  \\
	&=H^0(\Omega_X(logD_1)(E_4))  \\
	&	=H^0(\Omega_X (log(D_1-L_{34}))(L_{34}+E_4)) ~((K_X+2L_{34}+E_4)L_{34}=-2<0) \\
	&	=H^0(\Omega_X (log(L_{14}+L_{24}))(L-E_3))
	\end{align*}
	By Lemma \ref{push forward lemma}, this  is a subspace $V^1$ of $H^0(\Omega_{\mathbb{P}^2}(logl_{14},logl_{24})(1))$ consisting of sections satisfying several linear conditions.
	Choose a coordinate system $(x_1:x_2:x_3)$ on $\mathbb{P}^2$ such that $P_1=(0:1:0), P_2=(1:0:0), P_3=(1:1:1), P_4=(0:0:1)$. Then $l_{14}=\{x_1=0\}, l_{24}=\{ x_2=0\}$. By \cite{BC13} Lemma 4.5 and Corollary 4.6, any $\omega\in H^0(\Omega_{\mathbb{P}^2}(logl_{14},logl_{24})(1))$ has the form $\omega=\frac{dx_1}{x_1}(a_{12}x_2-a_{21}x_1+a_{13}x_3)+\frac{dx_2}{x_2}(-a_{12}x_2+a_{21}x_1+a_{23}x_3)+dx_3(-a_{13}-a_{23})$ ($a_{ij}\in \mathbb{C}$).

	Now let $\omega\in V^1$. Using  Lemma \ref{push forward lemma} for $P_4$, we get
	$$a_{13}+a_{23}=0.$$
	Using Lemma \ref{push forward lemma} for $P_1, P_2$, we get
	$$a_{12}=a_{21}=0.$$
	Since $\omega(P_3)=a_{13}dx_1+a_{23}dx_2+(-a_{13}-a_{23})dx_3=0$, we get $$a_{13}=a_{23}=0.$$
	Therefore, $H^0(\Omega_X(logD_1)(K_X+L_1))=V^1=0$.
\end{proof}

\begin{lemma}
	\label{3 term}
	$h^0(\Omega_X(logD_3)(K_X+L_3))=1.$
\end{lemma}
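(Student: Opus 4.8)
The plan is to mirror exactly the computation carried out for Lemma \ref{1 term}, since the two statements differ only in which divisor $D_i$ and line bundle $K_X+L_i$ one works with, and the arithmetic of the intersection numbers will force a different (nonzero) answer. Recall from the setup that $D_3=C+E_1+E_2+E_3$ and $K_X+L_3\equiv L_{12}-E_3$. The key difference from the $i=1$ case is that the resulting space will turn out to be one-dimensional rather than zero, so I must identify the single surviving section explicitly.

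First I would use Lemma \ref{replace lemma} repeatedly to strip off the components of $D_3$ that contribute nothing, exactly as in Lemma \ref{1 term}. The components $E_1,E_2,E_3,C$ are disjoint smooth rational curves of negative self-intersection, so for each connected component $N$ among them I check the numerical hypothesis $(K_X+2N+M)N<0$ (with $M$ the appropriately updated twist after the previous removals) and replace $\Omega_X(\log(D))(N+M)$ by $\Omega_X(\log(D-N))(M)$. The intersection numbers $K_X\cdot E_i=-1$, $E_i^2=-1$ (and the analogous data for $C$) should make each hypothesis easy to verify; I expect that after removing some of these components the twisting divisor simplifies to something like a pullback $L$ minus a few exceptional or line classes, reducing the problem to a logarithmic-forms computation on $\mathbb{P}^2$.

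Next, having reduced to $H^0(\Omega_X(\log(\text{a few lines}))(M))$ for the simplified $M$, I would invoke Lemma \ref{push forward lemma} to realize this as a subspace of an explicit space $H^0(\Omega_{\mathbb{P}^2}(\log l_{\alpha},\dots)(d))$ of logarithmic $1$-forms on $\mathbb{P}^2$, cut out by the linear conditions coming from pushing forward at each blown-up point $P_i$. As in the proof of Lemma \ref{1 term}, I would fix an explicit coordinate system $(x_1:x_2:x_3)$ adapted to the four points, write the general logarithmic form using \cite{BC13} Lemma 4.5 and Corollary 4.6, and then impose the $\Sigma_\alpha g_\alpha(0)=0$ conditions at each relevant $P_i$ together with the vanishing condition at the remaining point. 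The bookkeeping of constants $a_{ij}$ should collapse to a one-dimensional solution space.

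The main obstacle will be getting the numerics and the coordinate-dependent linear conditions exactly right so that the count comes out to $1$ and not $0$ or $2$: because $K_X+L_3\equiv L_{12}-E_3$ is numerically quite different from the $K_X+L_1\equiv E_4$ case, the sequence of admissible reductions via Lemma \ref{replace lemma} and the precise form of the surviving twist must be tracked with care, and one must correctly determine which point contributes a vanishing condition versus a push-forward condition. I expect the single surviving section to correspond geometrically to the logarithmic form $d\log(l_{12})$ (or a closely related form along $L_{12}$) twisted appropriately, reflecting the one extra degree of freedom that distinguishes $D_3$ from $D_1$; confirming that exactly one such form satisfies all the imposed linear conditions, and that it is genuinely nonzero, is the crux of the argument.
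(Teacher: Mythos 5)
Your plan is the paper's proof in outline: repeated use of Lemma \ref{replace lemma} to strip components of $D_3$, then Lemma \ref{push forward lemma} to identify the result with an explicit space of logarithmic forms on $\mathbb{P}^2$ cut out by linear conditions at the blown-up points. However, you stop exactly at what you yourself call the crux, so the proof is not complete as written, and two of your specific guesses go wrong. First, $C=L-E_4$ has $C^2=0$, not negative self-intersection, and it is \emph{not} stripped off: the paper removes only $E_3$, $E_2$, $E_1$ (each step with $(K_X+2N+M)N=-2<0$), arriving at $H^0(\Omega_X(\log C)(L))=H^0(\Omega_X(\log(L-E_4))(L))$, so the log divisor that survives is the strict transform of the line $l_4$ through $P_4$, not $L_{12}$; the class $L_{12}-E_3$ only enters as the initial twist $K_X+L_3$ and gets absorbed into $L$ along the way. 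Second, the final count: $H^0(\Omega_{\mathbb{P}^2}(\log l_4)(1))$ is two-dimensional, with general element $\omega=\frac{dx_1}{x_1}(a_2x_2+a_3x_3)-a_2dx_2-a_3dx_3$ in coordinates with $l_4=\{x_1=0\}$ and $P_4=(0:0:1)$, and the single push-forward condition of Lemma \ref{push forward lemma} at $P_4$ imposes $a_3=0$, leaving the one-dimensional space spanned by $x_2\,d\log x_1-dx_2$ (a form logarithmic along $l_4$, not $d\log l_{12}$). Without carrying out this reduction and verifying that exactly one linear condition is imposed, the value $1$ is asserted rather than proved.
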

\begin{proof} We use the same notation as   in Lemma \ref{1 term}. By Lemma \ref{replace lemma},
	\begin{align*}
	&  \hspace{5mm}  H^0(\Omega_X(logD_3)(K_X+L_3)) \\
	&=H^0(\Omega_X (log(D_3))(L_{12}-E_3)) \\
	&=H^0(\Omega_X(log(C+E_1+E_2))(L_{12}))~((K_X+2E_3+L_{12}-E_3)E_3=-2<0) \\
	&=H^0(\Omega_X (log(C+E_1))(L_{12}+E_2))~((K_X+2E_2+L_{12})E_2=-2<0)) \\
	&=H^0(\Omega_X (log(C))(L)) ~((K_X+2E_1+L_{12}+E_2)E_1=-2<0) \\
	&=H^0(\Omega_X (log(L-E_4))(L))
	\end{align*}
	which is a subspace $V^3$ of $H^0(\Omega_{\mathbb{P}^2} log(l_4)(1))$. Take a coordinate system  $(x_1:x_2:x_3)$ on $\mathbb{P}^2$ such that  $P_4=(0,0,1)$and  $l_4: x_1=0$. By \cite{BC13} Lemma 5,  any element $\omega\in H^0(\Omega_{\mathbb{P}^2} log(l_4)(1))$ has the form $\omega=\frac{dx_1}{x_1}(a_2x_2+a_3x_3)-a_2dx_2-a_3x_3$ ($a_2,a_3\in \mathbb{C}$).  Now let $\omega\in V^3$, using Lemma \ref{push forward lemma} for  $P_4$, we get  $a_3=0$, hence we have
	$V^3\cong \mathbb{C}$. Therefore $h^0(\Omega_X(logD_3)(K_X+L_3))=1.$
\end{proof}

\begin{lemma}
	\label{2 term}
	$h^0(\Omega_X(logD_2)(K_X+L_2))\leq 2.$
\end{lemma}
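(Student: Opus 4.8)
The plan is to mimic the reductions in Lemmas \ref{1 term} and \ref{3 term}: peel off the components of $D_2$ one at a time with the replace Lemma \ref{replace lemma}, and then finish with a residue sequence. First I would record that on $X$ the five components of $D_2 = Q + L_{12} + L_{23} + L_{13} + E_4$ are pairwise disjoint. Indeed $Q \equiv 2L - E_1 - E_2 - E_3$ and $L_{ij} \equiv L - E_i - E_j$, and one checks $Q \cdot L_{ij} = L_{ij} \cdot L_{ik} = Q \cdot E_4 = L_{ij} \cdot E_4 = 0$ (the conic and the three triangle lines pairwise meet only at the blown-up points $P_1, P_2, P_3$, and $E_4$ is disjoint from all of them). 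Hence each of the four $(-1)$-curves $E_4, L_{12}, L_{13}, L_{23}$ is a connected component of the smooth divisor $D_2$, and Lemma \ref{replace lemma} applies to each with $N \cong \mathbb{P}^1$, $N^2 = -1$.

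Next I would strip these four curves in turn, starting from $M = K_X + L_2 \equiv -L + E_1 + E_2 + E_3 - E_4$. At every step the relevant number $(K_X + 2N + M')N$ equals $-2 < 0$, so Lemma \ref{replace lemma} gives an isomorphism of the $H^0$'s; tracking the twist, $M$ is successively replaced by $-L + E_1 + E_2 + E_3$, then $E_3$, then $L - E_1$, and finally $2L - E_1 - E_2 - E_3 = Q$. The net effect is
\begin{align*}
H^0(\Omega_X(\log D_2)(K_X + L_2)) \cong H^0(\Omega_X(\log Q)(Q)).
\end{align*}
Since $Q^2 = 1 > 0$, Lemma \ref{replace lemma} can no longer remove $Q$, so the reduction to $\mathbb{P}^2$ and the use of Lemma \ref{push forward lemma} that finished the other terms is unavailable here; this is where a different idea is required, and I regard it as the main obstacle.

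To finish I would use the residue sequence of the smooth curve $Q$, twisted by $\mathcal{O}_X(Q)$,
\begin{align*}
0 \rightarrow \Omega_X(Q) \rightarrow \Omega_X(\log Q)(Q) \rightarrow \mathcal{O}_Q(Q) \rightarrow 0,
\end{align*}
which yields $h^0(\Omega_X(\log Q)(Q)) \leq h^0(\Omega_X(Q)) + h^0(\mathcal{O}_Q(Q))$. As $Q \cong \mathbb{P}^1$ and $\deg \mathcal{O}_Q(Q) = Q^2 = 1$, we have $h^0(\mathcal{O}_Q(Q)) = 2$. For the other term, using $\Omega_X \cong T_X \otimes \omega_X$ together with $K_X + Q \equiv -L + E_4 \equiv -(L - E_4) \equiv -C$ (with $C$ the effective class of a line through $P_4$), I get $\Omega_X(Q) \cong T_X(-C)$; the inclusion $\mathcal{O}_X(-C) \hookrightarrow \mathcal{O}_X$ then embeds $H^0(T_X(-C))$ into $H^0(T_X) = 0$, so $h^0(\Omega_X(Q)) = 0$. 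Combining gives $h^0(\Omega_X(\log D_2)(K_X + L_2)) \leq 0 + 2 = 2$, as claimed. The only care needed is to check that the four strips can be carried out in the stated order, each $N$ remaining a connected component of the shrinking divisor (which holds by the disjointness above), and to verify the single vanishing $h^0(T_X(-C)) = 0$.
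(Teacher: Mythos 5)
Your proposal is correct, and it reaches the bound by a genuinely different final step than the paper. The reductions via Lemma \ref{replace lemma} are the same in spirit (and your intersection computations $Q\cdot L_{ij}=L_{ij}\cdot L_{ik}=Q\cdot E_4=L_{ij}\cdot E_4=0$ and the four values $(K_X+2N+M')N=-2$ all check out), but the paper stops after stripping only $L_{12}$, $E_4$ and $L_{13}$, arriving at $H^0(\Omega_X(\log(Q+L_{23}))(L-E_1))$; it then descends to $\mathbb{P}^2$, computes $h^0(\Omega_{\mathbb{P}^2}(\log(\bar{Q}+l_{23}))(1))=3$ via the residue sequence there, writes the sections explicitly in coordinates, and cuts down by one dimension using the evaluation condition at $P_1$ coming from Lemma \ref{push forward lemma}. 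You instead strip all four $(-1)$-curves, land on $H^0(\Omega_X(\log Q)(Q))$, and finish intrinsically on $X$ with the residue sequence of the smooth rational curve $Q$: the bound $2$ comes from $h^0(\mathcal{O}_Q(Q))=h^0(\mathbb{P}^1,\mathcal{O}(1))=2$ together with $h^0(\Omega_X(Q))=h^0(T_X(-C))\leq h^0(T_X)=0$, the last vanishing being exactly the finiteness of $\mathrm{Aut}(X)$ already used in Lemma \ref{0 term}. This avoids the pushforward lemma and all coordinate computations, which is arguably cleaner; the paper's route has the mild advantage of exhibiting the surviving sections explicitly (useful if one wants to see that the bound $2$ is attained, though the paper only needs the inequality here, the equality being supplied by Lemma \ref{cohomology of tangent sequence}). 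One small inaccuracy in your commentary: the descent to $\mathbb{P}^2$ is not actually ``unavailable'' with $Q$ still in the log divisor --- the paper performs it with $\bar{Q}+l_{23}$ --- it is only the removal of $Q$ via Lemma \ref{replace lemma} that fails, since $(K_X+3Q)Q=0$; but this does not affect the validity of your argument.
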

\begin{proof} By Lemma \ref{replace lemma},
	\begin{align*}
	& \hspace{5mm} H^0(\Omega_X(logD_2)(K_X+L_2)) \\
	&=H^0(\Omega_X(logD_2)(-L_{12}+E_3-E_4)) \\
	&=H^0(\Omega_X(log(D_2-L_{12}))(E_3-E_4))~((K_X+2L_{12}-L_{12}+E_3-E_4)L_{12}=-2<0) \\
	&=H^0(\Omega_X(log(D_2-L_{12}-E_4))(E_3))~((K_X+2E_4+E_3-E_4)E_4=-2<0) \\
	&=H^0(\Omega_X(log(Q+L_{23}+L_{13})(E_3)) \\
	&=H^0(\Omega_X(log(Q+L_{23})(L_{13}+E_3))~((K_X+2L_{13}+E_3)L_{13}=-2<0) \\
	&=H^0(\Omega_X(log(Q+L_{23})(L-E_1))
	\end{align*}
	which is  a subspace $V^2$ of $H^0(\Omega_{\mathbb{P}^2}( log(\bar{Q} + l_{23}))(1))$.   From the exact sequence
	$$0\rightarrow \Omega_{\mathbb{P}^2}(1)\rightarrow \Omega_{\mathbb{P}^2}(log(\bar{Q} + l_{23}))(1)\rightarrow \mathcal{O}_{\bar{Q}+l_{23}}(1)\rightarrow 0,$$
	and  $h^i(\Omega_{\mathbb{P}^2}(1))=0$ $(i=0,1)$, we get
	$$H^0(\Omega_{\mathbb{P}^2} (log(\bar{Q} + l_{23}))(1)) \cong H^0(\mathcal{O}_{\bar{Q}+l_{23}}(1)),$$
	which has dimension 3.
	
	Choose a coordinate system $(x_1:x_2:x_3)$ on $\mathbb{P}^2$ such that $P_1=(1:0:0), P_2=(0:1:0), P_3=(0:0:1)$. Then $l_{23}=\{x_1=0\}, \bar{Q}=\{x_1x_2+x_2x_3+x_1x_3=0\}$. Note that any element  $\omega\in H^0(\Omega_{\mathbb{P}^2} log(\bar{Q} + l_{23})(1))$ is of the form $\omega=(a_1x_1+a_2x_2+a_3x_3)(\frac{dx_1}{x_1} +        \frac{d(x_1x_2+x_2x_3+x_1x_3)}{x_1x_2+x_2x_3+x_1x_3})$ ($a_1,a_2,a_3\in \mathbb{C}$). Now let $\omega\in V^2$, since  $\omega(P_1)=0$, we get $a_1=0$. Therefore $h^0(\Omega_X(logD_2)(K_X+L_2))=$dim$V^2\leq 2$.
\end{proof}

On the other hand,
By \cite{Cat84} (2.18), we have the following exact sequence (since $S$ is of general type, we have $H^0(T_S)=0$)

$$0=H^0(T_S)\rightarrow H^0(\pi^*T_X) \rightarrow \bigoplus _{i=1}^3 H^0(\mathcal{O}_{D_i}(D_i)\oplus \mathcal{O}_{D_i}(D_i-L_i)) \xrightarrow{\partial} H^1(T_S) $$$$\rightarrow H^1(\pi^*T_X)\rightarrow \bigoplus_{i=1}^3 H^1(\mathcal{O}_{D_i}(D_i)\oplus \mathcal{O}_{D_i}(D_i-L_i)) \rightarrow H^2(T_S)\rightarrow H^2(\pi^*T_X) \rightarrow 0.$$
\begin{lemma}
	\label{cohomology of tangent sequence}
	$H^0(\pi^*T_X)=H^0(T_X)\oplus H^0(T_X(-L_1))\oplus H^0(T_X(-L_2))\oplus H^0(T_X(-L_3))=0$, so the map $$\partial:  \bigoplus_{i=1}^3 H^0(\mathcal{O}_{D_i}(D_i)\oplus \mathcal{O}_{D_i}(D_i-L_i)) \rightarrow H^1(T_S) $$
	is injective.
	Moreover, we have
	$$h^0(\mathcal{O}_{D_1}(D_1) \oplus \mathcal{O}_{D_1}(D_1-L_1))=h^0(\mathcal{O}_{D_1}(D_1))=0;$$
	$$h^0(\mathcal{O}_{D_2}(D_2)\oplus \mathcal{O}_{D_2}(D_2-L_2))=h^0(\mathcal{O}_{D_2}(D_2))=2;$$
	$$h^0(\mathcal{O}_{D_3}(D_3)\oplus \mathcal{O}_{D_3}(D_3-L_3))=h^0(\mathcal{O}_{D_3}(D_3))=1.$$
	Therefore,  $$h^1(T_S)\geq \Sigma_{i=1}^3h^0(\mathcal{O}_{D_i}(D_i)\oplus \mathcal{O}_{D_i}(D_i-L_i))=3.$$
\end{lemma}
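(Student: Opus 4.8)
The plan is to exploit the long exact sequence from \cite{Cat84}(2.18) quoted just above: since the outgoing term $H^0(\pi^*T_X)$ will be shown to vanish, the map $\partial$ is injective, so $h^1(T_S)$ is bounded below by the dimension of the source of $\partial$, which I then compute directly by restricting the sheaves $\mathcal{O}_{D_i}(D_i)$ and $\mathcal{O}_{D_i}(D_i-L_i)$ to the (disjoint, smooth, rational) components of each $D_i$.

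First I would establish $H^0(\pi^*T_X)=0$. By the projection formula together with the standard decomposition of the structure sheaf of a smooth bidouble cover (\cite{Cat84}), $\pi_*\mathcal{O}_S=\mathcal{O}_X\oplus\bigoplus_{i=1}^3\mathcal{O}_X(-L_i)$, whence $H^0(\pi^*T_X)=H^0(T_X)\oplus\bigoplus_{i=1}^3 H^0(T_X(-L_i))$, exactly the decomposition asserted in the statement. Now $H^0(T_X)=0$ because $X$ is the degree-$5$ Del Pezzo surface and $\mathrm{Aut}(X)\cong\mathfrak{S}_5$ is finite (already used in Lemma \ref{0 term}). Each $L_i$ is effective: $L_1\equiv 3L-E_1-E_2-E_3$ is the class of cubics through $P_1,P_2,P_3$, $L_2\equiv 2(L-E_4)$, and $L_3\equiv 4L-2E_1-2E_2-2E_3-E_4$ is the class of quartics with nodes at $P_1,P_2,P_3$ passing through $P_4$. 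Hence multiplication by a defining section of $\mathcal{O}_X(L_i)$ embeds $H^0(T_X(-L_i))\hookrightarrow H^0(T_X)=0$. Therefore $H^0(\pi^*T_X)=0$, and by exactness $\partial$ is injective, giving $h^1(T_S)\ge\sum_{i=1}^3 h^0(\mathcal{O}_{D_i}(D_i)\oplus\mathcal{O}_{D_i}(D_i-L_i))$.

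Next I would compute each summand. The key observation is that, for the smooth bidouble cover of Lemma \ref{both are smooth bidouble cover} to exist, each $D_i$ must itself be smooth, i.e.\ a \emph{disjoint} union of its components, each of which is a smooth rational curve. Consequently, for a component $N\subset D_i$ one has $\mathcal{O}_{D_i}(D_i)|_N=\mathcal{O}_N(N^2)$ and $\mathcal{O}_{D_i}(D_i-L_i)|_N=\mathcal{O}_N((D_i-L_i)\cdot N)$, and $h^0(\mathcal{O}_{D_i}(M))=\sum_N h^0(\mathcal{O}_{\mathbb{P}^1}(M\cdot N))$ with $h^0(\mathcal{O}_{\mathbb{P}^1}(d))=\max(d+1,0)$. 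It then remains to compute intersection numbers in $\mathrm{Pic}(X)$ from $L^2=1$, $E_i^2=-1$, $L\cdot E_i=E_i\cdot E_j=0$. For $D_1$ every component $L_{i4}$ has self-intersection $-1$, while $D_1-L_1\equiv -3E_4$ meets each $L_{i4}$ in degree $-3$, so the contribution is $0$. For $D_2$ only the conic $Q$ (with $Q^2=1$) contributes, via $\mathcal{O}_Q(Q^2)=\mathcal{O}_{\mathbb{P}^1}(1)$, giving $2$, whereas $D_2-L_2$ has negative degree on every component. For $D_3$ only the line $C$ (with $C^2=0$) contributes $\mathcal{O}_{\mathbb{P}^1}(0)$, giving $1$, and again $D_3-L_3$ is negative on each component. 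Summing gives $0+2+1=3$.

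None of the individual steps is genuinely hard; the content is organizational. The one point to handle with care, and which I regard as the crux, is the disjointness of the components within each $D_i$: it is what legitimizes the identification $\mathcal{O}_{D_i}(D_i)|_N=\mathcal{O}_N(N^2)$ and converts the cohomology computation on the (a priori reducible) divisor $D_i$ into a sum of trivial $\mathbb{P}^1$ computations. Once that is in place, the intersection-number bookkeeping is routine, and combining it with the injectivity of $\partial$ yields the stated inequality $h^1(T_S)\ge 3$.
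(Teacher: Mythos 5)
Your proposal is correct and follows essentially the same route as the paper: the projection formula plus effectiveness of the $L_i$ to kill $H^0(\pi^*T_X)$ and get injectivity of $\partial$, then the observation that each $D_i$ is a disjoint union of smooth rational curves so that the $h^0$'s reduce to degree computations on $\mathbb{P}^1$'s, with the conic $Q$ contributing $2$ and the line $C$ contributing $1$. (Your degree $-3$ for $\mathcal{O}_{D_1}(D_1-L_1)$ on $L_{34}$ is in fact the correct value; the paper's $-2$ there is a harmless slip, as both are negative.)
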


\begin{proof}  By the projection formula, we have $\pi_*(\pi^*T_X)=T_X\oplus(\bigoplus_{i=1}^3  T_X(-L_i))$. Since $\pi$ is an affine morphism, we have  $H^0(\pi^*T_X)= H^0(\pi_*\pi^*T_X)=H^0(T_X\oplus(\bigoplus_{i=1}^3  T_X(-L_i)))$. Since $h^0(T_X)=0$ and $L_i $ $(i=1,2,3)$  are    effective divisors, we see $h^0(T_X(-L_i))\leq h^0(T_X)=0$.   Hence $\partial$ is injective and $h^1(T_S)\geq \sum_{i=1}^3h^0(\mathcal{O}_{D_i}(D_i)\oplus \mathcal{O}_{D_i}(D_i-L_i))$.
	
	Note each $D_i$ is a disjoint union of smooth rational curves. Now  the lemma follows  from
	
	$\mathcal{O}_{D_1}(D_1)\cong \mathcal{O}_{L_{14}}(-1)\oplus \mathcal{O}_{L_{24}}(-1)\oplus \mathcal{O}_{L_{34}}(-1);$
	
	$\mathcal{O}_{D_1}(D_1-L_1)\cong \mathcal{O}_{L_{14}}(-3)\oplus \mathcal{O}_{L_{24}}(-3)\oplus\mathcal{O}_{L_{34}}(-2);$
	
	$\mathcal{O}_{D_2}(D_2)\cong \mathcal{O}_{L_{12}}(-1)\oplus \mathcal{O}_{L_{13}}(-1)\oplus\mathcal{O}_{L_{23}}(-1) \mathcal{O}_{E_4}(-1) \oplus \mathcal{O}_Q(1);$
	
	$\mathcal{O}_{D_2}(D_2-L_2)\cong \mathcal{O}_{L_{12}}(-3)\oplus \mathcal{O}_{L_{13}}(-2)\oplus \mathcal{O}_{L_{23}}(-3) \mathcal{O}_{E_4}(-2) \oplus\mathcal{O}_Q(-3);$
	
	$\mathcal{O}_{D_3}(D_3)\cong \mathcal{O}_{E_1}(-1)\oplus \mathcal{O}_{E_2}(-1)\oplus\mathcal{O}_{E_3}(-1) \oplus \mathcal{O}_C;$
	
	$\mathcal{O}_{D_3}(D_3-L_3)\cong \mathcal{O}_{E_1}(-3)\oplus \mathcal{O}_{E_2}(-3)\oplus \mathcal{O}_{E_3}(-3) \oplus \mathcal{O}_C(-3).$
\end{proof}

\begin{pro}
	\label{irreducible component}
	Let $S$ be a  general surface in $M$. Then $h^1(T_S)=3$ and  $\overline{\mathcal{M}}$ is an irreducible component of $\mathcal{M}^{5,2}_{1,1}$. Moreover,  every small deformation of $S$ is a natural deformation (cf.   \cite{Cat84} Definition2.8).
\end{pro}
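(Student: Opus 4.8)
The plan is to compute $h^1(T_S)$ exactly by trapping it between matching upper and lower bounds, both equal to $3$, and then to read off both the component statement and the natural-deformation statement from the resulting equality.

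First I would assemble the upper bound. Recall from the discussion preceding Lemma \ref{0 term} that Catanese's splitting (\cite{Cat84} Theorem 2.16) gives
\[
h^1(T_S)=h^0(\Omega_X(\log D_1,\log D_2,\log D_3)\otimes\omega_X)+\sum_{i=1}^3 h^0(\Omega_X(\log D_i)(K_X+L_i)).
\]
The four summands have already been bounded in Lemmas \ref{0 term}, \ref{1 term}, \ref{3 term} and \ref{2 term} by $0$, $0$, $1$ and $\le 2$ respectively, so $h^1(T_S)\le 3$. For the lower bound I would invoke Lemma \ref{cohomology of tangent sequence}, whose injective connecting map $\partial$ has a $3$-dimensional source, giving $h^1(T_S)\ge 3$. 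Combining the two yields $h^1(T_S)=3$; in particular the inequality of Lemma \ref{2 term} is forced to be an equality.

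Next I would exploit the equality $h^1(T_S)=3$ inside the exact sequence \cite{Cat84}~(2.18). Since its source $\bigoplus_{i=1}^3 H^0(\mathcal O_{D_i}(D_i)\oplus\mathcal O_{D_i}(D_i-L_i))$ is $3$-dimensional (Lemma \ref{cohomology of tangent sequence}) and $\partial$ is injective, $\partial$ is now an isomorphism onto $H^1(T_S)$. The image of $\partial$ is, by construction, the space of first-order natural deformations of the bidouble cover $\pi\colon S\to X$, so its surjectivity says that every infinitesimal deformation of $S$ is natural. Because the natural deformations are realised by an explicit family (deforming the branch data $D_i$ together with the lower-order terms attached to the $L_i$), whose Kodaira--Spencer map is precisely $\partial$, this surjectivity upgrades to the statement that every small deformation of $S$ is a natural deformation in the sense of \cite{Cat84} Definition 2.8.

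Finally I would derive the component statement. Every natural deformation of $S$ is again a smooth bidouble cover of $X$ with branch divisors of the prescribed type, hence its moduli point lies in $\overline{\mathcal M}$; thus a whole neighbourhood of $[S]$ in $\mathcal M^{5,2}_{1,1}$ is contained in $\overline{\mathcal M}$. Since $\overline{\mathcal M}$ is irreducible of dimension $3$ (Lemma \ref{dimM 1}) and the local dimension of the moduli space at $[S]$ is at most $h^1(T_S)=3$, the closed irreducible set $\overline{\mathcal M}$ must be an entire irreducible component. The step I expect to be the main obstacle is the middle one: translating the purely cohomological surjectivity of $\partial$ into the geometric conclusion that all small deformations are natural. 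This requires matching the image of $\partial$ with the Kodaira--Spencer image of Catanese's natural-deformation family and checking that no obstructions---which a priori live in the nonzero space $H^2(T_S)$, with $h^2(T_S)=h^1(T_S)=3$---can push a deformation outside that family.
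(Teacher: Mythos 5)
Your proposal follows essentially the same route as the paper: upper bound $h^1(T_S)\le 3$ from Lemmas \ref{0 term}--\ref{2 term}, lower bound from the injectivity of $\partial$ in Lemma \ref{cohomology of tangent sequence}, hence $h^1(T_S)=3=\dim\mathcal{M}$ and $\overline{\mathcal{M}}$ is a component, with $\partial$ now bijective. The only detail the paper adds (and which resolves the "main obstacle" you flag) is that the Kodaira--Spencer map of the natural-deformation family is $\rho=\partial\circ(\sum_i r_i)$ with $r_i\colon H^0(\mathcal{O}_X(D_i)\oplus\mathcal{O}_X(D_i-L_i))\to H^0(\mathcal{O}_{D_i}(D_i)\oplus\mathcal{O}_{D_i}(D_i-L_i))$ surjective, so $\rho$ is surjective and completeness of the family follows without any obstruction analysis in $H^2(T_S)$.
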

\begin{proof} By Lemmas  \ref{0 term}-\ref{2 term},  we have  $h^1(T_S) \leq 3$; by Lemma \ref{cohomology of tangent sequence}, we have $h^1(T_S) \geq 3$, thus we get $h^1(T_S)=3=\dim {\mathcal{M}}$.  Hence $\overline{\mathcal{M}}$ is an irreducible component of $\mathcal{M}^{5,2}_{1,1}$.

	Since $h^1(T_S)=3$, the map $$\partial:  \bigoplus_{i=1}^3 H^0(\mathcal{O}_{D_i}(D_i)\oplus \mathcal{O}_{D_i}(D_i-L_i)) \rightarrow H^1(T_S) $$ in Lemma \ref{cohomology of tangent sequence}
	is bijective.
	
	Since the natural  restriction map $r_i: H^0(\mathcal{O}_X(D_i)\oplus \mathcal{O}_X(D_i-L_i)) \rightarrow H^0(\mathcal{O}_{D_i}(D_i)\oplus \mathcal{O}_{D_i}(D_i-L_i))$ is surjective for each $i$,  the composition map
	$$\rho:=\partial\circ(\sum_{i=1}^3r_i): \bigoplus_{i=1}^3 H^0(\mathcal{O}_X(D_i)\oplus \mathcal{O}_X(D_i-L_i))\rightarrow H^1(T_S)$$
	is also surjective. Therefore every small deformation of $S$ is a natural deformation.
\end{proof}

\section{Comparison  with  Catanese-Pignatelli's   structure theorem for genus 2 fibrations }
In this section, we     study  the 5-tuple $(B, V_1, \tau, \xi, \omega)$  in Catanese-Pignatelli's structure theorem for genus 2 fibrations (see \cite{CP06} section 4)  for     the Albanese fibration of a surface  $S\in M$.  We first recall some basic definitions related to   Catanese-Pignatelli's structure theorem for genus 2 fibrations.

Given  a relatively minimal genus 2 fibration  $f:S\rightarrow B$,
set $V_n:=f_*\omega_{S/B}^{\otimes n}$ and $\mathcal{R}:=\oplus_{n=0}^{\infty}V_n$. Let   $S\rightarrow S'$ be the contraction of $(-2)$ -curves. Note that the genus 2 fibration $f$ induces an involution $j'$ on $S$, which maps $(-2)$-curves to $(-2)$-curves and thus induces an involution $j$ on $S'$. Hence  We have  a conic bundle   $\pi_\mathcal{C}:\mathcal{C}:=S'/j\rightarrow B$.
The  5-tuple $(B,V_1,\tau, \xi, \omega)$ associated to $f$  is defined as follows:

  $B$ is the base curve;

  $V_1=f_*\omega_{S/B}$;

  $\tau$ is the effective divisor on $B$ whose structure sheaf is isomorphic to the cokernel of the morphism $S^2(V_1)\rightarrow V_2$ (induced by multiplication in $\mathcal{R}$), whose support corresponds to the singular fibres of the conic bundle $\mathcal{C}\rightarrow B$;

  $\xi \in \Ext^1_{\mathcal{O}_B}(\mathcal{O}_\tau, S^2(V_1))/ Aut_{\mathcal{O}_B}(\mathcal{O}_\tau)$ corresponds  to the  extension
  $$0\rightarrow S^2(V_1)\stackrel{\upsilon}{\rightarrow} V_2 \rightarrow \mathcal{O}_\tau\rightarrow 0;$$

  $\omega\in\mathbb{P}(H^0(B,\mathcal{A}_6\otimes (\det V_1\otimes \mathcal{O}_B(\tau))^{-2}))\cong  |\mathcal{O}_{\mathcal{C}}(6)\otimes \pi_\mathcal{\mathcal{C}}^*(\det V_1\otimes \mathcal{O}_B(\tau))^{-2}|$ corresponds to the branch divisor of the double cover  $S'\rightarrow \mathcal{C}$,
where $\mathcal{A}_6$ is defined as follows.
Consider the map
$i_n:  (\det V_1)^2\otimes S^{n-2}(V_2)\rightarrow S^n(V_2)$ ($n\geq 2$) defined locally by $i_n((x_0\wedge x_1)^{\otimes2}\otimes q)=(\upsilon(x_0^2)\upsilon(x_1^2)-\upsilon(x_0x_1)^2)q$, where where $x_0,x_1$ are generators of the stalk of $V_1$ and $q$ is an element of the stalk of $S^{n-2}(V_2)$ at a point. Define $\mathcal{A}_{2n}$ to be the cokernel of $i_n$. In particular $\mathcal{A}_6$ is the cokernel of $i_3$.

We remark that $\mathcal{C}=\mathbf{Proj}(\mathcal{A})$, where $\mathcal{A}$ is a graded $\mathcal{O}_B$ module defined by the 5-tuple  $(B,V_1,\tau, \xi, \omega)$ as follows.
Consider the map $j_n: V_1\otimes (\det V_1)\otimes \mathcal{A}_{2n-2}\rightarrow V_1\otimes \mathcal{A}_{2n}$ ($n\geq 1$) locally defined by $j_n(l\otimes (x_0\wedge x_1)\otimes q)=x_0\otimes (\upsilon(x_1l)q)-x_1\otimes (\upsilon(x_0l)q)$, where $x_0,x_1,q$ are as before and  $l$ is an element of the stalk of $V_1$ at a point. Define $\mathcal{A}_{2n+1}$ ($n\geq 1$) to be the cokernel of $j_n$. By \cite{CP06} Lemma 4.4, $\mathcal{A}_n$ is a locally free sheaf  on $B$ for all $n\geq 3$. Let $\mathcal{A}_0:=\mathcal{O}_B$,  $\mathcal{A}_1:=V_1$ and $\mathcal{A}_2:=V_2$.  Then $\mathcal{A}:= \oplus_{n\geq 0}\mathcal{A}_n$.

\vspace{3ex}
Now   we prove that surfaces in $M$ are in one to one correspondence with minimal surfaces    satisfying the following condition:

$(\star)$  \quad   $p_g=q=1, K^2=5, g=2$; after choosing an appropriate neutral  element $0$ for the genus one curve  $B=Alb(S)$, $V_1=E_{[0]}(2,1), V_2=\mathcal{O}_B(2\cdot0) \oplus \mathcal{O}_B(2\cdot0) \oplus \mathcal{O}_B(2\cdot0)$ and $\tau=\eta_1+\eta_2+\eta_3$, where $\eta_1, \eta_2, \eta_3$ are the three nontrivial  2-torsion points on $B$.

\vspace{3ex}
First we show that a general surface $S\in M$ satisfies  condition $(\star)$.
\begin{lemma}
	\label{compute the 5-tuple}
	Let $S$ be a general surface in $M$. Then $S$  satisfies  condition ($\star$).
\end{lemma}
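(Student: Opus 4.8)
We must verify that a general surface $S \in M$ satisfies condition $(\star)$, i.e. that after a suitable choice of origin on $B = \mathrm{Alb}(S)$ we have $V_1 = E_{[0]}(2,1)$, $V_2 = \mathcal{O}_B(2\cdot 0)^{\oplus 3}$, and $\tau = \eta_1+\eta_2+\eta_3$ the sum of the three nontrivial $2$-torsion points.

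Let me think about how I would prove this.

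The structure here: $S$ is a smooth bidouble cover $\pi: S \to X$ of the degree-5 Del Pezzo surface, explicitly given. The Albanese fibration $f: S \to B$ over the elliptic curve $B$ is genus 2. I need to compute the Catanese-Pignatelli 5-tuple data, at least the pieces $V_1, V_2, \tau$.

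**Strategy.**

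First, $V_1 = f_*\omega_{S/B}$. Since $p_g = q = 1$, $g=2$, we know $V_1$ is a rank-2 bundle on $B$ (an elliptic curve) with $\deg V_1 = \chi(\mathcal{O}_S) + (g-1)(b-1)\cdot\text{stuff}$... Actually I should compute the degree via standard formulas. For surfaces with $p_g=q=1$ and genus-$g$ Albanese fibration, $\deg V_1 = \chi(\mathcal{O}_S) = p_g - q + 1 = 1$. So $V_1$ has rank 2 and degree 1 on an elliptic curve. By Atiyah's classification, such a bundle is either decomposable or the indecomposable bundle $E_p(2,1)$. I'd want to show it's $E_{[0]}(2,1)$ — indecomposable — and this should follow because $h^0(V_1) = p_g = 1$ (a decomposable degree-1 bundle $\mathcal{O}(p)\oplus\mathcal{O}$ or $\mathcal{O}(p)\oplus L$ with $L$ degree 0 would typically give $h^0 = 1$ too, so I need more — probably the indecomposability comes from $q=1$ meaning $h^0(V_1)$ counts with the right torsion, or I rule out the decomposable case by a direct cohomological computation). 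The choice of origin $0$ is then forced by $\det V_1 = \mathcal{O}_B(0)$ via Atiyah.

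**Computing $V_2$ and $\tau$.** This is the heart. I would use the bidouble-cover structure. Since $S \to X$ is a $(\mathbb{Z}/2)^2$-cover, $\pi_*\omega_S$ (and the relative version) splits into eigensheaves indexed by the characters, expressible via the $L_i$ and $K_X+L_i$. The Albanese map $f$ factors through a pencil on $X$ (the genus-2 fibration downstairs corresponds to some fibration of the Del Pezzo or of $\mathbb{P}^2$). I would identify the fibration $X \to B$... but wait, $B$ is elliptic and $X$ is rational, so $f$ does not factor through $X$; rather the $(\mathbb{Z}/2)^2$-action produces the irregularity. I'd compute $V_2 = f_*\omega_{S/B}^{\otimes 2}$ by pushing forward the character decomposition and using $\deg V_2 = \chi(\mathcal{O}_S) + \deg(\text{something}) $; more concretely $\mathrm{rk}\,V_2 = 3$, $\deg V_2 = 3\chi(\mathcal{O}_S)+? $ — I'd pin down $\deg V_2 = 3$ and $h^0(V_2)=P_2 = \chi+K^2 = 1+5 = 6$. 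A rank-3 degree-3 bundle on an elliptic curve with $h^0 = 6$ and the expected vanishing is forced, by Atiyah again, to be $\mathcal{O}_B(2\cdot 0)^{\oplus 3}$ once I know it is polystable/decomposable with each summand of degree 1 and $h^0 = 2$.

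**The divisor $\tau$.** The support of $\tau$ corresponds to the singular fibres of the conic bundle $\mathcal{C} \to B$, equivalently to the fibres of $f$ where the genus-2 curve degenerates in the relevant way. With $K^2 = 5$, $\deg \tau$ should be computed from $\deg\tau = \deg V_2 - 2\deg V_1 = 3 - 2\cdot 1 = 1$?? — no, $\tau$ has degree related to the number of singular fibres; the formula $\deg\tau = K^2 - 2\chi + 6(g-1)(b-1)$-type bookkeeping from \cite{CP06} should give $\deg\tau = 3$. Then identifying that the three points are exactly the $2$-torsion points $\eta_1,\eta_2,\eta_3$ requires understanding where the bidouble cover degenerates over $B$ — I'd trace the three branch divisors $D_1,D_2,D_3$ and the induced $(\mathbb{Z}/2)^2$-symmetry, which translates into the translation-by-$2$-torsion symmetry on $B$, forcing $\tau$ to be $(\mathbb{Z}/2)^2$-invariant, hence supported on $2$-torsion.

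**Main obstacle.** The hard part will be the explicit identification of $\tau$ with the three nontrivial $2$-torsion points (not just its degree and $(\mathbb{Z}/2)^2$-invariance), and correspondingly showing $V_2$ splits as three copies of $\mathcal{O}_B(2\cdot 0)$ rather than some other decomposition with the same rank and degree. I expect this to require using the $(\mathbb{Z}/2)^2$-action of the bidouble cover explicitly: the three nontrivial characters give three intermediate double covers $S \to Y_i \to X$, each an elliptic or hyperelliptic fibration, and the three $2$-torsion points on $B$ are exactly the points over which these three double covers branch in a fibre. Making this correspondence precise — matching the eigensheaf decomposition of $V_2$ under the Galois action to the three $2$-torsion translations on $B = \mathrm{Alb}(S)$ — is where the real content lies; everything else is degree/Euler-characteristic bookkeeping pinned down by Atiyah's classification.
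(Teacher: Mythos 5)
Your overall strategy (exploit the bidouble-cover structure, then pin things down by rank/degree bookkeeping and Atiyah's classification) is in the right spirit, but each of the three key identifications has a genuine gap, and in two cases the mechanism you propose cannot work. For $\tau$: you suggest that the $(\mathbb{Z}/2\mathbb{Z})^2$-symmetry translates into invariance of $\tau$ under translation by $2$-torsion, forcing its support onto $2$-torsion points. A degree-$3$ divisor cannot be invariant under a fixed-point-free action of a group of order $4$, so this argument is impossible as stated; moreover the relevant induced symmetry on $B$ is $u\mapsto -u$, not a translation. The paper's route is direct and geometric: the quotient of $S$ by the hyperelliptic involution is the conic bundle $\mathcal{C}$, realized as the double cover of $X$ branched over $D_1\cup D_3$, which is exactly one smooth fibre and three singular fibres of the conic-bundle fibration $g:X\to\mathbb{P}^1$ (lines through $P_4$); hence $B$ is the double cover of $\mathbb{P}^1$ branched at those four points, and putting the origin over the smooth-fibre branch point makes the three points under the singular conics of $\mathcal{C}$ precisely $\eta_1,\eta_2,\eta_3$, whence $\tau=\eta_1+\eta_2+\eta_3$.

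For $V_2$: your numerics are inconsistent ($\deg V_2=3$ is incompatible with $h^0(V_2)=P_2=6$ and with the target $\mathcal{O}_B(2\cdot 0)^{\oplus 3}$, which has degree $6$; the correct value is $\deg V_2=K^2+\chi=6$), and in any case rank, degree and $h^0$ do not determine the splitting type --- e.g.\ $\mathcal{O}_B(2\cdot 0)\oplus\mathcal{O}_B(p+q)\oplus\mathcal{O}_B(r+s)$ has the same invariants. The paper instead computes $h^0(V_2(-2\cdot 0))=h^0(2K_S-2F_0)=3$ geometrically and then runs the case analysis of Lemma \ref{limit V1 V2}, which shows this single extra vanishing-type condition forces $V_2=\mathcal{O}_B(2\cdot 0)^{\oplus 3}$. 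For $V_1$: you correctly observe that $h^0(V_1)=1$ and $\deg V_1=1$ do not rule out $\mathcal{O}_B(p)\oplus L$ with $L$ a nontrivial degree-$0$ bundle, and you leave this open; the paper closes it by identifying $\mathcal{C}$ birationally with the symmetric product $B^{(2)}$, writing down the branch data of the induced rational double cover $S\dashrightarrow B^{(2)}$, and computing $V_1=p_*\mathcal{O}_{B^{(2)}}(D_0)=E_{[0]}(2,1)$ explicitly. Until these three points are repaired, the proposal does not constitute a proof.
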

\begin{proof}  	
	
	We use notation of section 3.  The bidouble cover $\pi: S\rightarrow X$ can be regarded as two successive  double covers $\pi_1: \mathcal{C}\rightarrow X$   branched  over  $D_1\cup D_3$ and $\pi_2: S\rightarrow \mathcal{C}$  branched  over  $\pi_1^*(D_2\cup (D_1\cap D_3))$. Note that  $D_1\cup D_3$ is the union of a smooth fibre (over $\gamma_0\in \mathbb{P}^1$) and three singular fibres (over $\gamma_i\in \mathbb{P}^1$ $(i=1,2,3)$) of the natural fibration  $g: X\rightarrow \mathbb{P}^1$. Let $\mu: B'\rightarrow \mathbb{P}^1$ be the double cover with branch divisor $\gamma_0+ \gamma_1+ \gamma_2+ \gamma_3$. Then $g(B')=1$. Moreover, there is   a unique  (singular)  fibration $\tilde{g}: \mathcal{C}\rightarrow B'$ such that the  following   diagram
	$$\xymatrix{\mathcal{C}\ar[r]^{\pi_1} \ar[d]^{\tilde{g}} &X\ar[d]^g\\
		B'\ar[r]^\mu &\mathbb{P}^1
	}$$
	commutes. Since the  general fibre $F$  of $f:=\pi_2\circ \tilde{g}: S\rightarrow B'$ is connected, by the universal property of Albanese map, we know that $B'=B$. Moreover, $\mathcal{C}$ is exactly the conic bundle in Catanese-Pignatelli's structure theorem for genus 2 fibrations. Fix a group law for $B$ and let  $\mu^{-1}(\gamma_0)$ be the neutral element $0\in B$.  Then $\eta_i:=\mu^{-1}(\gamma_i)$ $(i=1,2,3)$ are the three nontrivial 2-torsion points on $B$.  Since  $\mathcal{C}$ has exactly three nodes on the three fibres over $\eta_1, \eta_2, \eta_3$, we know $\tau=\eta_1+\eta_2+\eta_3$.
	
	Since $h^0(V_2(-2\cdot 0))=h^0(2K_S-2F_0)=h^0(\pi^*(2K_X+D-\Gamma))=3$ (where $\Gamma$ is a  fibre of $g$), by a similar argument as in Lemma \ref{limit V1 V2}, one can show easily that  $V_2=\mathcal{O}_B(2\cdot0) \oplus \mathcal{O}_B(2\cdot0) \oplus \mathcal{O}_B(2\cdot0)$.
	
	Now we show that $V_1=E_{[0]}(2,1)$. Let $\tilde{\mathcal{C}}\rightarrow \mathcal{C}$ be the minimal resolution of $\mathcal{C}$, then the pull back of each singular fibre of $\mathcal{C}$ is a union of a $(-2)$ curve and two $(-1)$ curves. Contracting the six $(-1)$ curves of $\tilde{\mathcal{C}}$, we get a smooth ruled surface, which is exactly the second symmetric product $B^{(2)}$ of $B$.  Let $\lambda: \mathcal{C}\dashrightarrow B^{(2)}$ be the birational map above. Then   we get a rational double cover $\pi_2':=\lambda\circ\pi_2: S\dashrightarrow B^{(2)}$.
	
	Let $p: B^{(2)}\rightarrow B$ be the natural projection,
	let  $D_u:=\{(x,u)|x\in B\}$ be a section  and  $E_v:=\{(x,v-x)| x\in B\}$ be a fibre of $p$ (cf. \cite{CP06} p. 1028).  Then the branch divisor of $\pi_2'$ consists of:   three fibres  $E_{\eta_1}, E_{\eta_2}, E_{\eta_3}$;  four sections $D_0, D_{\eta_1}, D_{\eta_2}, D_{\eta_3}$;  and a bisection $\equiv 2D_0+E_0$ passing though    $Q_i:=(0,\eta_i)(i=1,2,3)$,$Q_4:=(\eta_1,\eta_2)$,$Q_5:=(\eta_1,\eta_3)$ and $Q_6:=(\eta_2,\eta_3)$.
	Hence  we have
	$$|K_S|\cong  |\pi_2'^*(D_0+3E_0-\Sigma_{i=1}^6 Q_i)| \cong  |D_0|+|E_{\eta_1}+E_{\eta_2}+E_{\eta_3}-\Sigma_{i=1}^6 Q_i)|.$$
	It is easy to see that  $V_1=f_*\omega_S=(\tilde{g}\circ \pi_2)_*\omega_S=p_*\mathcal{O}_{B^{(2)}}(D_0)=E_{[0]}(2,1)$.
\end{proof}

Next we show that any surface $S\in M$  satisfies  condition $(\star)$.

\begin{lemma}
	\label{limit V1 V2}
	Let $p: \mathcal{S} \rightarrow T$ be a 1-parameter   family of minimal surfaces  with base $T\ni 0$ connected and smooth. Assume that  for any $0\neq t\in T$, $\mathcal{S}_t$    satisfies the condition $(\star)$. Then $\mathcal{S}_0$ also satisfies   condition $(\star)$.
\end{lemma}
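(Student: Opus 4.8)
The plan is to transport the three pieces of data in condition $(\star)$ — the bundle $V_1$, the bundle $V_2$, and the divisor $\tau$ — from the general fibres to the central fibre, using deformation invariance for $V_1$, flatness for $\tau$, and a semicontinuity-plus-stability argument for $V_2$.

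First I would dispose of the numerical part. Since $K^2$ and $\chi(\mathcal{O})$ are deformation invariants and $q=h^1(\mathcal{O})$ is a Hodge number, $\mathcal{S}_0$ again has $p_g=q=1,\ K^2=5$, and the genus $g=2$ of the Albanese fibre is a differentiable invariant (as recalled in the introduction), so $f_0\colon\mathcal{S}_0\to B_0:=\mathrm{Alb}(\mathcal{S}_0)$ is again a genus $2$ fibration over an elliptic curve. After shrinking $T$ I would form the relative Albanese $\mathcal{B}\to T$, a smooth family of elliptic curves, with the relative fibration $\mathcal{S}\to\mathcal{B}$; the relative $2$-torsion $\mathcal{B}[2]\to T$ is finite \'etale, so after a finite base change its four sections are defined. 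I would fix the neutral element fibrewise by declaring $0_t\in B_t$ to be the point with $\mathcal{O}_{B_t}(0_t)\cong\det V_{1,t}$; this is an algebraic section of $\mathcal{B}\to T$ and hence specializes to a well-defined $0_0\in B_0$, with respect to which the three nontrivial $2$-torsion points $\eta_i^0$ are determined.

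For $V_1$ I would argue by topological invariance. Because $B_t$ is elliptic, $\omega_{\mathcal{S}_t/B_t}=\omega_{\mathcal{S}_t}$, so $V_{1,t}=(f_t)_*\omega_{\mathcal{S}_t}$; as $h^0$ on the fibres is the constant $g=2$, the relative pushforward $\mathcal{V}_1$ is locally free of rank $2$, commutes with base change, and $\deg V_{1,t}=\chi(\mathcal{O}_{\mathcal{S}_t})=1$ throughout. The number $\nu_1$ of indecomposable summands of $f_*\omega_S$ is a topological invariant (Catanese--Ciliberto \cite{CC91}), and the fibres of a smooth proper family of minimal surfaces of general type are diffeomorphic, so $\nu_{1,0}=\nu_{1,t}=1$ and $V_{1,0}$ is indecomposable. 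A rank $2$, degree $1$, indecomposable bundle on $B_0$ with $\det V_{1,0}\cong\mathcal{O}_{B_0}(0_0)$ is exactly $E_{[0_0]}(2,1)$ by Atiyah's classification \cite{Ati57}. For $\tau$ I would use flatness: the multiplication $S^2V_1\to V_2$ is fibrewise injective with cokernel $\mathcal{O}_\tau$ for every genus $2$ fibration \cite{CP06}, so the relative cokernel of $S^2\mathcal{V}_1\to\mathcal{V}_2$ is finite flat over $T$ of degree $\deg V_2-\deg S^2V_1=3$. Its fibre $\tau_0$ is then the flat limit of the $\tau_t=\eta_1^t+\eta_2^t+\eta_3^t$, and since the $2$-torsion sections are disjoint and \'etale over $T$, this limit is again the reduced sum of the three nontrivial $2$-torsion points of $B_0$.

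The real obstacle is pinning down $V_{2,0}$, because the isomorphism class of a bundle is not a deformation invariant and the limit could a priori fail to split. Here $\mathcal{V}_2$ is again locally free and compatible with base change, so $V_{2,0}$ has rank $3$ and degree $6$; I would prove $V_{2,0}\cong\mathcal{O}_{B_0}(2\cdot 0_0)^{\oplus 3}$ by showing the twist $W:=V_{2,0}(-2\cdot 0_0)$ (rank $3$, degree $0$) is trivial. Set $P:=S^2V_{1,0}(-2\cdot 0_0)$; since $V_{1,0}=E_{[0_0]}(2,1)$ is stable, $S^2V_{1,0}$ is semistable of slope $1$, so $P$ is semistable of slope $-1$ and $h^0(P)=0$. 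From $0\to P\to W\to\mathcal{O}_{\tau_0}\to 0$ I get $h^0(W)\le h^0(P)+\mathrm{length}\,\tau_0=3$, while upper semicontinuity for the family $\mathcal{V}_2(-2\cdot 0)$ gives $h^0(W)\ge h^0(W_t)=3$; hence $h^0(W)=3$. The same sequence bounds subsheaves: as $W/P$ is torsion, any rank $r$ subsheaf $G\subseteq W$ meets $P$ in rank $r$, whence $\deg G\le \deg(G\cap P)+\mathrm{length}\,\tau_0\le -r+3$. Thus $W$ has no line subbundle of degree $\ge 3$ and no rank $2$ subsheaf of degree $\ge 2$, and a short case analysis on the saturated image shows that the three independent sections cannot be everywhere linearly dependent. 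Therefore the evaluation map $\mathcal{O}_{B_0}^{\oplus 3}\to W$ is generically of maximal rank, and since $\deg W=0$ it is an isomorphism, giving $V_{2,0}\cong\mathcal{O}_{B_0}(2\cdot 0_0)^{\oplus 3}$ and completing condition $(\star)$ for $\mathcal{S}_0$.
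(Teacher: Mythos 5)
Your proposal is correct, and while it shares the paper's overall skeleton (deformation/topological invariance for the discrete data and for $V_1$, plus upper semicontinuity of $h^0(V_2(-2\cdot 0))$ fed into the structure sequence $0\to S^2V_1\to V_2\to\mathcal{O}_\tau\to 0$), it executes the two delicate identifications differently. For $V_2$, the paper runs an exhaustive case analysis over Atiyah's classification of rank $3$, degree $6$ bundles (indecomposable; rank $2$ indecomposable plus a line bundle; three line bundles), checking that $h^0(V_2(-2\cdot 0))\le 2$ in every case except $\mathcal{O}_B(2\cdot 0)^{\oplus 3}$; you instead pin down $h^0(W)=3$ exactly, use semistability of $S^2V_{1,0}$ to bound the degrees of subsheaves of $W$ uniformly ($\deg G\le -\operatorname{rk}G+3$), and conclude that the evaluation map $\mathcal{O}_{B_0}^{\oplus 3}\to W$ is an isomorphism. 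This is a genuinely different and arguably cleaner mechanism --- it never needs to enumerate decomposition types --- though the ``short case analysis'' excluding a rank $\le 2$ saturated image should be written out (a rank $\le 2$ subbundle $G'$ with $h^0(G')\ge 3$ would force a line subbundle of degree $\ge 3$ or a rank $2$ subsheaf of degree $\ge 2$, both excluded by your bound). For $\tau$, the paper deduces $\tau=\eta_1+\eta_2+\eta_3$ \emph{after} identifying $V_1$ and $V_2$, via the diagonalizability statement of Remark \ref{deformation invariant}(ii); you obtain it independently as the flat limit of the relative cokernel of $S^2\mathcal{V}_1\to\mathcal{V}_2$ along the disjoint $2$-torsion sections, which is self-contained and also makes your normalization of the neutral element (as the point representing $\det V_{1,t}$) cleaner than the paper's choice via a section of the family. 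Both routes are valid; yours trades the explicit Atiyah bookkeeping for a bit more relative machinery (flatness of the cokernel, base change for $\mathcal{V}_1,\mathcal{V}_2$), all of which is standard here since the relevant fibrewise $h^0$'s are constant.
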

\begin{proof} Note that  $p_g, q, K^2$, the number of the direct summands of $V_1$ (cf. Remark \ref{deformation invariant} below)  and  the genus $g$ of Albanese fibre   (cf. \cite{CP06} Remark 1.1)  are all differentiable invariants, hence they are also deformation invariants. Therefore,    $\mathcal{S}_0$ also has  $p_g=q=1, K^2=5, g=2$   such that  $V_1$ is an indecomposable rank 2 vector bundle of degree 1.

	Taking a  base change and replacing $T$ with  a (Zariski) open subset if necessary, we can assume that $p$ has a section $s: T \rightarrow \mathcal{S}$, so  we can choose base points $x_0$ for all $\mathcal{S}_t:=p^{-1}(t)$ $ (t\in T)$ simultaneously, therefore we can define the Albanese map ($x\mapsto \int_{x_0} ^x$) for all $\mathcal{S}_t$ $(t\in T)$ simultaneously. Thus we get a smooth family $q: \mathcal{B} \rightarrow T$ with $\mathcal{B}_t:=q^{-1}(t)=Alb(\mathcal{S}_t)$ $(t\in T)$, which also has a section induced by $s$. Hence  we can choose the neutral element 0 for all $\mathcal{B}_t$ $(t\in T)$ simultaneously and  assume $V_1=E_{[0]}(2,1)$ for $\mathcal{S}_0$. Moreover we have the following commutative diagram:
	
	$$
	\xymatrix{
		\mathcal{S} \ar[r]^\alpha \ar[rd]^p & \mathcal{B} \ar[d]^q
		\\
		~ & T}
	$$

	Now we use the upper semi-continuity for $h^0(V_2(-2\cdot0))$. Since for $\mathcal{S}_t$ ($t\neq 0$), we have $h^0(V_2(-2\cdot0))=3$,  we  have $h^0(V_2(-2\cdot0))\geq3$ for $\mathcal{S}_0$.    Set $B:=\mathcal{B}_0$.
	
	(i) If $V_2$ is indecomposable, then $V_2=F_2(2b)$ for some point $b\in B$ (here $F_2$ is the unique indecomposable rank 2 vector bundle over $B$ with $\det F_2=\mathcal{O}_B$), so $h^0(V_2(-2\cdot0))\leq 1$, a contradiction;
	
	(ii) If $V_2=W\oplus L$ for some rank 2 indecomposable vector bundle $W$ and some line bundle $L$, then by the exact sequence
	$$0\rightarrow \bigoplus_{i=1}^3\mathcal{O}_B(\eta_i) \rightarrow V_2 \rightarrow \mathcal{O}_{\tau} \rightarrow 0$$
	we know that  $\deg W\geq 2, \deg L\geq 1$. Since $\deg W+\deg L=6$,  we know  $(\deg W, \deg L)= (2,4), (3,3), (4,2)$ or $(5,1)$. In  all cases above, we always have $h^0(V_2(-2\cdot0))\leq 2$, a contradiction;
	
	(iii) If $V_2$ is a direct sum of three line bundles $L_1, L_2, L_3$,   w.l.o.g. we can assume $\deg L_1\leq \deg L_2\leq \deg L_3$. From the exact sequence
	$$0\rightarrow \bigoplus_{i=1}^3\mathcal{O}_B(\eta_i) \rightarrow V_2 \rightarrow \mathcal{O}_{\tau} \rightarrow 0$$
	we get $\deg L_i\geq 1$ $(i=1,2,3)$, thus  $(\deg L_1, \deg L_2, \deg L_3)=(1,1,4)$,  $(1,2,3)$ or $(2,2,2)$. In the first two cases,  we have   $h^0(V_2(-2\cdot0))\leq 2$, a contradiction; in the last case, we see that   $h^0(V_2(-2\cdot0))\geq3$ if and only if $L_i\cong \mathcal{O}_B(2\cdot0)$ for  all $i$.
	Hence  for $\mathcal{S}_0$, we also have $V_2= \mathcal{O}_B (2\cdot0) \oplus \mathcal{O}_B(2\cdot0) \oplus \mathcal{O}_B(2\cdot0)$.
	
	By the following Remark \ref{deformation invariant} (ii), we see  $\tau=\eta_1+\eta_2+\eta_3$ for $\mathcal{S}_0$.  Therefore $\mathcal{S}_0$ also satisfies  condition $(\star)$.
\end{proof}

\begin{remark}
	\label{deformation invariant}
	(i)  Catanese-Ciliberto (\cite{CC91} Theorem 1.4, Proposition 2.2) proved that  the number of the direct summands of $V_1$ is a topological invariant; however,  the case of $V_2$ is quite different,  as we shall show in section 6  that  the number of the direct summands of $V_2$ is even not a deformation invariant.

	(ii)	If S is a  surface with $p_g=q=1, K^2=5, g=2$ such that  $V_1=E_{[0]}(2,1)$, $V_2=\mathcal{O}_B(2\cdot0) \oplus \mathcal{O}_B(2\cdot0) \oplus \mathcal{O}_B(2\cdot0)$, then   we can
	choose a suitable  coordinate system  $(y_1:y_2:y_3)$ on  the fibre   of $\mathbb{P}(V_2)=B\times \mathbb{P}^2\rightarrow B$ such that  the matrix of the map $\sigma_2: S^2(V_1)\rightarrow V_2$ is diagonal (see \cite{Pig09} Proposition 4.5),  then  $\tau=\eta_1+\eta_2 +\eta_3$ (where $\tau$ is one of the 5-tuple in Catanese-Pignatelli's structure theorem for genus 2 fibrations) and the equation of the conic bundle  $\mathcal{C} \subset \mathbb{P}(V_2)$ is  $a_1^2y_1^2 +a_2^2y_2^2+a_3^2y_3^2=0$ (here $a_i\in H^0(\mathcal{O}_B(\eta_i))$).
	In particular, $\mathcal{C}$ has exactly   three nodes $\{a_1=y_2=y_3=0\}$, $\{a_2=y_1=y_3=0\}$, $\{a_3=y_1=y_2=0\}$ on three singular fibres over $\eta_1, \eta_2, \eta_3$.
	
	(iii) By (ii) above,    it is possible that two minimal surfaces $S_1\cong S_2$, but $S_1$ and $S_2$ have different $\tau$. Hence it is possible that  the surfaces with $p_g=q=1,K^2=5,g=2$ constructed by Ishida \cite{Ish05} are isomorphic to some surfaces in $M$.
\end{remark}

Now we can  prove the following:
\begin{pro}
	\label{5-tuple for all surface in M}
	Let $S$ be any  surface in  $M$. Then $S$ satisfies      condition $(\star)$.
\end{pro}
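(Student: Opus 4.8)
The plan is to deduce the proposition from Lemmas \ref{compute the 5-tuple} and \ref{limit V1 V2}, combined with the irreducibility of the family $M$ recorded in Lemma \ref{dimM 1}. By Lemma \ref{compute the 5-tuple}, condition $(\star)$ holds on a dense open subset $U$ of the parameter space of $M$ (this is the meaning of ``a general surface in $M$''). The strategy is then to exhibit an arbitrary surface $S_0\in M$ as the central fibre $\mathcal{S}_0$ of a one-parameter family all of whose other fibres satisfy $(\star)$, and to invoke the specialization Lemma \ref{limit V1 V2} to conclude that $S_0$ satisfies $(\star)$ as well. In other words, the content of the proposition is to propagate $(\star)$ from the generic locus $U$ to every point of $M$, and the entire propagation mechanism is already supplied by Lemma \ref{limit V1 V2}.

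Concretely, I would first fix the irreducible parameter space $P$ of $M$ (the space of admissible data $(P_1,\dots,P_4,B',C)$, irreducible of dimension $3$ by Lemma \ref{dimM 1}) and the associated family of minimal surfaces over $P$. Given $S_0\in M$, pick a parameter $p_0\in P$ whose fibre is $S_0$. Since $U\subset P$ is dense open and $P$ is irreducible of positive dimension, I can choose an irreducible curve $C_0\subset P$ through $p_0$ that is not contained in the proper closed subset $P\setminus U$; hence the generic point of $C_0$ lies in $U$. Passing to the normalization $\nu:T'\to C_0$, choose $0\in T'$ with $\nu(0)=p_0$, so $T'$ is a smooth irreducible (hence connected) curve, and pull back the family along $T'\to P$ to obtain a family of minimal surfaces over $T'$ with central fibre $S_0$. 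The locus where $(\star)$ fails is contained in the preimage of $P\setminus U$, a finite subset of $T'$; deleting the finitely many such points other than $0$ yields an open connected smooth curve $T\subset T'$ with $0\in T$ over which every fibre $\mathcal{S}_t$ with $t\neq 0$ satisfies $(\star)$.

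This is exactly the hypothesis of Lemma \ref{limit V1 V2}, which then forces $\mathcal{S}_0=S_0$ to satisfy $(\star)$, completing the argument. The step requiring the most care is the construction of the family of minimal surfaces over $T$: since the bidouble covers of $X$ degenerate for special parameters (Lemma \ref{both are smooth bidouble cover} guarantees smoothness only generically), one must verify that, possibly after a further finite base change of $T'$, the minimal models assemble into a genuine one-parameter family of smooth minimal surfaces to which Lemma \ref{limit V1 V2} applies. Because all members of $M$ are minimal surfaces of general type with the controlled singularities produced by the fixed $(0,1,3)$ bidouble construction at $P_1,\dots,P_4$, a simultaneous minimal model over the curve exists, and everything else is a formal consequence of the irreducibility of $M$ together with the specialization Lemma \ref{limit V1 V2}.
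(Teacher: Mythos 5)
Your proposal is correct and follows essentially the same route as the paper: the paper also reduces the statement to Lemma \ref{compute the 5-tuple} for the general member and to the specialization Lemma \ref{limit V1 V2} via a one-parameter degeneration (with simultaneous resolution after base change) for the remaining surfaces. The only cosmetic difference is that the paper first disposes of the case where the canonical model $S'$ equals $S$ directly through Lemma \ref{compute the 5-tuple}, and invokes the degeneration argument only when $S'$ is singular, whereas you run the degeneration argument uniformly; both versions rest on exactly the same two lemmas and the irreducibility of $M$.
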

\begin{proof}  Let $S$ be a surface in $M$ and let $S'$ be its canonical model. If $S'=S$,  then $S$ is a smooth bidouble cover of $X$. By Lemma \ref{compute the 5-tuple}, $S$ satisfies  condition $(\star)$.
	
	If $S'$ is singular, since a general surface in $M$ has smooth canonical model, we can find a smooth  1-parameter  family $p: \mathcal{S} \rightarrow T$ such that  $\mathcal{S}_0=S$ and $\mathcal{S}_t$ $(t\neq 0)$ is a general surface in $M$. By Lemma \ref{limit V1 V2}, $S=\mathcal{S}_0$ also  satisfies   condition $(\star)$.
\end{proof}

In the following, we show that the converse of Proposition \ref{5-tuple for all surface in M} is also true.
\begin{lemma}
	\label{canonical model is a bidouble cover}	
	Let $S$ be a  minimal surface  satisfying   condition $(\star)$. Then the canonical model $S'$ of $S$ is a bidouble cover of $X$.
\end{lemma}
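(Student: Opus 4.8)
The plan is to reverse the two-step description of Lemma~\ref{compute the 5-tuple}: realize $S'$ as a double cover of the conic bundle $\mathcal{C}$, realize $\mathcal{C}$ as a double cover of $X$, and then check that the two covering involutions generate a $(\mathbb{Z}/2\mathbb{Z})^2$-action on $S'$ whose quotient is $X$. To set up, since $S$ satisfies $(\star)$, Remark~\ref{deformation invariant}(ii) provides fibre coordinates $(y_1:y_2:y_3)$ on $\mathbb{P}(V_2)=B\times\mathbb{P}^2$ in which
$$\mathcal{C}:\quad a_1^2y_1^2+a_2^2y_2^2+a_3^2y_3^2=0,\qquad a_i\in H^0(\mathcal{O}_B(\eta_i)),$$
and $S'$ is the double cover $\pi_2\colon S'\to\mathcal{C}$ branched along the member $\omega\in|\mathcal{O}_{\mathcal{C}}(6)\otimes\pi_{\mathcal{C}}^*(\det V_1\otimes\mathcal{O}_B(\tau))^{-2}|$; its covering involution is the relative hyperelliptic involution $j$, so $\mathcal{C}=S'/j$.

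The key geometric step is to exhibit $\mathcal{C}$ as a double cover of $X$. Let $\iota\colon B\to B$ be the elliptic involution fixing the four $2$-torsion points $0,\eta_1,\eta_2,\eta_3$, and let $\mu\colon B\to\mathbb{P}^1=B/\iota$ be the quotient, branched at their images $\gamma_0,\gamma_1,\gamma_2,\gamma_3$. Since $\eta_i$ is a ramification point and $2\eta_i\equiv 2\cdot 0$, we have $\mathcal{O}_B(2\eta_i)\cong\mu^*\mathcal{O}_{\mathbb{P}^1}(1)$; as $\iota^*a_i=\pm a_i$, the square $a_i^2$ is $\iota$-invariant and hence equals $\mu^*\bar a_i$ for the linear form $\bar a_i$ on $\mathbb{P}^1$ vanishing at $\gamma_i$. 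Thus the equation of $\mathcal{C}$ is $\mu^*(\bar a_1y_1^2+\bar a_2y_2^2+\bar a_3y_3^2)$ and $\mathcal{C}=\mathcal{X}\times_{\mathbb{P}^1}B$, where $\mathcal{X}\colon \bar a_1y_1^2+\bar a_2y_2^2+\bar a_3y_3^2=0$ in $\mathbb{P}^1\times\mathbb{P}^2$. A direct Jacobian check shows $\mathcal{X}$ is smooth, a conic bundle over $\mathbb{P}^1$ with exactly three reducible fibres (over $\gamma_1,\gamma_2,\gamma_3$); hence $K_{\mathcal{X}}^2=5$ and $\mathcal{X}$ is the degree $5$ Del Pezzo surface, identified with the fibration $g\colon X\to\mathbb{P}^1$ of Lemma~\ref{compute the 5-tuple}. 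Being a base change of $\mu$, the projection $\pi_1\colon\mathcal{C}=\mathcal{X}\times_{\mathbb{P}^1}B\to X$ is a double cover, branched over the four fibres of $g$ above $\gamma_0,\ldots,\gamma_3$ (i.e. over $D_1\cup D_3$), with covering involution $\theta\colon(b,(y_1:y_2:y_3))\mapsto(\iota(b),(y_1:y_2:y_3))$ lifting $\iota$.

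It remains to lift $\theta$ to an involution $\tilde\theta$ of $S'$ commuting with $j$; then $\langle j,\tilde\theta\rangle\cong(\mathbb{Z}/2\mathbb{Z})^2$, and $S'/\langle j,\tilde\theta\rangle=\mathcal{C}/\theta=X$ realizes $S'$ as a bidouble cover of $X$. Such a lift exists as soon as the branch datum of $\pi_2$ is $\theta$-equivariant: the square-root bundle $\mathcal{L}$ with $\mathcal{L}^{\otimes 2}\cong\mathcal{O}_{\mathcal{C}}(\omega)$ is automatically $\theta$-invariant, since $\det V_1\otimes\mathcal{O}_B(\tau)\cong\mathcal{O}_B(4\cdot 0)$ and $\mathcal{L}$ is built from powers of the $\iota$-invariant bundles $\mathcal{O}_{\mathcal{C}}(1)$ and $\mathcal{O}_B(0)$. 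The main obstacle is therefore the $\theta$-invariance of the branch divisor itself, namely that $\omega=\pi_1^*R$ is pulled back from a divisor $R$ on $X$. I would prove this by decomposing $H^0(\mathcal{O}_{\mathcal{C}}(6)\otimes\pi_{\mathcal{C}}^*(\det V_1\otimes\mathcal{O}_B(\tau))^{-2})$ into $\theta$-eigenspaces and showing that the constraints of $(\star)$ — in particular $V_1=E_{[0]}(2,1)$ together with the diagonal form of $\sigma_2\colon S^2(V_1)\to V_2$ from Remark~\ref{deformation invariant}(ii) — force $\omega$ into the invariant eigenspace. Granting this, the covering data descend to $X$, and one checks that the lift $\tilde\theta$ can be chosen of order two (so that $\langle j,\tilde\theta\rangle$ is $(\mathbb{Z}/2\mathbb{Z})^2$ rather than $\mathbb{Z}/4\mathbb{Z}$); then $S'\to X$ is the desired bidouble cover with intermediate quotient $\mathcal{C}$.
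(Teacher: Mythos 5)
Your overall strategy coincides with the paper's: present $\pi\colon S'\to X$ as the composition of the hyperelliptic double cover $\pi_2\colon S'\to\mathcal{C}$ with a double cover $\pi_1\colon\mathcal{C}\to X$ obtained from the elliptic involution $u\mapsto -u$ on $B$, identify the quotient $\mathcal{C}/\theta$ with the degree $5$ Del Pezzo surface via the explicit diagonal equation $b_1x_1^2+b_2x_2^2+b_3x_3^2=0$ in $\mathbb{P}^1\times\mathbb{P}^2$, and reduce everything to the $\theta$-invariance of the branch divisor of $\pi_2$. Up to that point your write-up is correct (and the fibre-product description $\mathcal{C}=\mathcal{X}\times_{\mathbb{P}^1}B$ is a clean way to package what the paper does by writing the quotient equation directly).

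The genuine gap is that the decisive step is only announced, not proved: you write ``I would prove this by decomposing \dots into $\theta$-eigenspaces \dots Granting this, the covering data descend.'' But the $\theta$-invariance of the branch divisor $\Delta_2$ of $\pi_2$ is the entire mathematical content of the lemma once the setup is in place; without it nothing forces the two involutions to generate a $(\mathbb{Z}/2\mathbb{Z})^2$-action with quotient $X$, and a priori $\Delta_2$ could be a non-invariant member of its linear system. Your proposed mechanism is also slightly off target: you suggest that the constraints of $(\star)$ (e.g.\ the diagonal form of $\sigma_2$) must be invoked to push $\omega$ into the invariant eigenspace, whereas in fact the anti-invariant eigenspace vanishes identically, so \emph{every} member of the linear system is $\theta$-invariant and no further input is needed. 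Concretely, the paper writes $H^0(\mathcal{O}_{\mathcal{C}}(\Delta_2))\cong H^0((3T+F)|_X)\oplus H^0((3T-F)|_X)$ (invariant plus anti-invariant summand under $(\pi_1)_*$), where $T|_X\equiv 2L-E_1-E_2-E_3$ and $F|_X\equiv L-E_4$ on $X$, and then kills the second summand by an elementary argument on the Del Pezzo surface: if an effective divisor $D'$ existed in that class, negativity of intersection with $E_4$ forces $3E_4$ into its fixed part, and the residual effective divisor $D''$ satisfies $(-K_X)D''=0$ with $-K_X$ ample, hence $D''=0$, a contradiction. You should carry out this (or an equivalent) computation; the same remark applies to your parenthetical claims that the square-root bundle $\mathcal{L}$ is ``automatically'' $\theta$-linearizable and that the lift $\tilde\theta$ can be chosen of order two, which are standard but are asserted rather than checked.
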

\begin{proof}
	The Albanese fibration of $S$ induces an involution $i'$ on $S$, which maps $(-2)$ curves to $(-2)$ curves, thus induces an involution $i$ on the canonical model $S'$ of $S$. The quotient $\mathcal{C}:=S'/i$ is nothing but the conic bundle in Catanese-Pignatelli's  structure theorem. By Remark \ref{deformation invariant}, after choosing a suitable coordinate system  $(y_1:y_2:y_3)$ on  the fibre  of $\mathbb{P}(V_2)=B\times \mathbb{P}^2\rightarrow B$, we can assume that  the equation of the conic bundle  $\mathcal{C} \subset \mathbb{P}(V_2)$ is  $$a_1^2y_1^2 +a_2^2y_2^2+a_3^2y_3^2=0$$ (here $a_i\in H^0(\mathcal{O}_B(\eta_i))$).
	
	There is an  involution $j'$ on $B\times \mathbb{P}^2$ induced by the involution $j_o: u\mapsto -u$ on $B$.  Since $\mathcal{C}$ is invariant under $j'$, $j'$ induces an  involution $j$ on $\mathcal{C}$.  If we denote by $\iota: B\rightarrow \mathbb{P}^1$ the quotient map induced by $j_o$ and denote  by $(x_1:x_2:x_3)$ the coordinate system  on  the fibre   of $\mathbb{P}^1\times \mathbb{P}^2\rightarrow \mathbb{P}^1$ corresponding to $(y_1:y_2:y_3)$. Then the equation of $X':=\mathcal{C}/j $ is
	$$h:=b_1x_1^2 +b_2x_2^2+b_3x_3^2=0$$
	where $b_i\in H^0(\mathcal{O}_{\mathbb{P}^1}(\iota(\eta_i)))$. Since  the Jacobian matrix of $h$ always has rank 1, $X'$ is a smooth surface  of bi-degree $(1,2)$ in $\mathbb{P}^1\times \mathbb{P}^2$. In particular,  $-K_{X'}$ is ample and $K_{X'}^2=5$, which implies that  $X'$ is  the  Del Pezzo surface  $X$ of degree 5.
	
	Now  we have  two successive double covers $\pi_1: \mathcal{C}\rightarrow X$ and $\pi_2: S'\rightarrow \mathcal{C}$.  We only need to show that   the composition $\pi:=\pi_1\circ \pi_2: S'\rightarrow X$ is really a bidouble cover.

	Let $p_1,p_2$ be  the natural projection from $\mathbb{P}^1 \times \mathbb{P}^2$ to $\mathbb{P}^1, \mathbb{P}^2$ respectively and let $T:= p_2^*\mathcal{O}_{\mathbb{P}^2}(1),F:=p_1^*\mathcal{O}_{\mathbb{P}^1}(1)$; let $\tilde{p}_1,\tilde{p}_2$ be  the natural projection from $B \times \mathbb{P}^2$ to $B, \mathbb{P}^2$ respectively and let $\tilde{T}:= \tilde{p}_2^*\mathcal{O}_{\mathbb{P}^2}(1),\tilde{F}:=\tilde{p}_1^*\mathcal{O}_{\mathbb{P}^1}(1)$. Denote by $\Delta_1,\Delta_2$ the branch divisor of $\pi_1,\pi_2$ respectively, then $\Delta_1\equiv (4F)|_X, \Delta_2\equiv (3\tilde{T}-2\tilde{F}_0)|_{\mathcal{C}}$. To show that $\pi:=\pi_1\circ \pi_2$ is a bidouble cover, it suffices to show that $\Delta_2$ is invariant under $j$: if so, we can lift $j$ to an involution $\tilde{j}$ on $S'$, hence we get a group $G:=\{1, i, \tilde{j}, i\circ \tilde{j}\}\cong (\mathbb{Z}/2\mathbb{Z})^2$ acting on $S'$ and the quotient $S'/G$ is nothing but $X$. Therefore $\pi: S'\rightarrow X$ is a bidouble cover.

	\vspace{3ex}
	Now we show that $\Delta_2$ is invariant under $j$. To show this, it suffices to show that  $\Delta_2=\pi^*D$ for some effective  divisor $D$ on $X$. Since $\Delta_2\equiv \pi_1^*(3T+F)|_{\mathcal{C}}$, it suffices to show   $H^0(\Delta_2)\cong H^0((3T+F)|_X)$.
	Since  $H^0(\Delta_2)\cong H^0((3T+F)|_X)\oplus H^0((3T-F)|_X)$,   we only need to show $H^0((3T-F)|_X)=0$.
	
	Using   the same notation $L, E_i$ of section 3.2, up to an automorphism of $X$, we have $T|_X\equiv 2L-E_1-E_2-E_3, F|_X\equiv L-E_4$. If $H^0((3T-F)|_X)\neq 0$, then there is an effective divisor $D'\equiv (3T-F)|_X\equiv 3L-3E_1-3E_2-3E_3+3E_4$. Since $D'E_4<0$, $(D'-E_4)E_4<0$ and $(D'-2E_4)E_4<0$,   $3E_4$ is contained in the fixed part of $D'$. Thus $D'':=D'-3E_4$ is also an effective divisor. Since $-K_X\equiv 3L-E_1-E_2-E_3-E_4$ is ample and $(-K_X)D''=0$, we get $D''= 0$, a contradiction. Hence $H^0((3T-F)|_X)=0$.
	
	Therefore $\Delta_2$ is invariant under $j$ and consequently $\pi: S'\rightarrow X$ is a bidouble cover.
\end{proof}

\begin{pro}
	\label{inverse of 5-tuple}
	Let $S$ be a minimal  surface  satisfying   condition $(\star)$.  Then $S\in M$.
\end{pro}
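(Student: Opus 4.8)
The plan is to reconstruct the full $(\mathbb{Z}/2\mathbb{Z})^2$-building data of the bidouble cover $\pi\colon S'\to X$ furnished by Lemma~\ref{canonical model is a bidouble cover} and to match it, up to an automorphism of $X$ (recall $\mathrm{Aut}(X)\cong\mathfrak{S}_5$), with the data $(D_1,D_2,D_3;L_1,L_2,L_3)$ of Catanese's Example~8 recorded in Lemma~\ref{both are smooth bidouble cover}. Once the three branch divisors are identified, $S'$ is forced to be the canonical model of a cover in Catanese's family, so its minimal model $S$ lies in $M$. Throughout I use that $\pi$ was exhibited as the composite of $\pi_1\colon\mathcal{C}\to X$ (with $X=\mathcal{C}/j$) and $\pi_2\colon S'\to\mathcal{C}$, with Galois group $G=\{1,i,\tilde j,i\circ\tilde j\}$, and that the equations $a_1^2y_1^2+a_2^2y_2^2+a_3^2y_3^2=0$ of $\mathcal{C}$ and $b_1x_1^2+b_2x_2^2+b_3x_3^2=0$ of $X$ are available from the proof of that lemma.

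First I would determine the reduced branch divisor $\Delta_1=D_1+D_3$ of $\pi_1$. The equation of $X$ exhibits $g\colon X\to\mathbb{P}^1$ as a conic bundle whose only reducible fibres lie over $\iota(\eta_1),\iota(\eta_2),\iota(\eta_3)$ (the three points where a single $b_i$ vanishes), each splitting into two $(-1)$-curves; in the blow-up model this is the pencil of lines through $P_4$, with reducible fibres $L_{i4}+E_i$ and general fibre $C\equiv L-E_4=F|_X$. Since $\mathcal{C}=X\times_{\mathbb{P}^1}B$ is the base change of the double cover $\iota\colon B\to\mathbb{P}^1$, which is branched exactly at $\iota(0),\iota(\eta_1),\iota(\eta_2),\iota(\eta_3)$, its branch locus $\Delta_1$ is the union of the three reducible fibres and one smooth fibre $C$, giving $\Delta_1\equiv 4(L-E_4)=(4F)|_X$ as claimed. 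The three nodes of $\mathcal{C}$ (over $\eta_1,\eta_2,\eta_3$, cf. Remark~\ref{deformation invariant}) sit one per reducible fibre, precisely at $L_{i4}\cap E_i$; hence the splitting $\Delta_1=D_1+D_3$ imposed by the node structure of the cover puts one component of each reducible fibre in $D_1$ and the other together with the smooth fibre $C$ in $D_3$. After normalising by an automorphism of $X$ this is exactly $D_1=L_{14}+L_{24}+L_{34}$ and $D_3=C+E_1+E_2+E_3$.

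Next I would pin down the third divisor $D_2$, the remaining component of the reduced branch locus $D_1+D_2+D_3$ of $\pi$. With $D_1,D_3$ fixed, the requirement that $L_2=(D_1+D_3)/2\equiv 2L-2E_4$, $L_1=(D_2+D_3)/2$ and $L_3=(D_1+D_2)/2$ be integral classes constrains $D_2$ modulo $2$, and the numerical invariants of the cover ($p_g=q=1$, $K^2=5$) together with effectivity then force $D_2\equiv 5L-3(E_1+E_2+E_3)+E_4$. Imposing finally that $D=D_1\cup D_2\cup D_3$ have normal crossings, that the cover be of the correct singularity type $(0,1,3)$ at $P_1,P_2,P_3,P_4$, and that $D_2$ be compatible with the branch $\Delta_2$ of $\pi_2$ computed in Lemma~\ref{canonical model is a bidouble cover}, I would show that the reduced $D_2$ must decompose as $E_4$ together with the triangle $L_{12}+L_{13}+L_{23}$ and a smooth conic $Q\equiv 2L-E_1-E_2-E_3$ through $P_1,P_2,P_3$. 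Thus $(D_1,D_2,D_3)$ and the induced $(L_1,L_2,L_3)$ coincide with Catanese's case-I data, the only remaining freedom being the conic $Q$ (two parameters) and the line $C$ through $P_4$ (one parameter) — exactly the three parameters cutting out $M$.

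These data realise $S'$ as the canonical model of a bidouble cover in Catanese's Example~8; should the normalisation land in case~II, Proposition~\ref{equivalent of two families} gives an isomorphic surface in case~I, so in all cases the minimal model $S$ lies in $M$, which is the assertion. The hard part will be the third step: rigorously extracting the reduced, normal-crossing shape of $D_2$ (conic $+$ triangle $+\,E_4$) from its numerical class and from the branch data of $\pi_2$, and ruling out every degenerate configuration of $Q$ or $C$ for a surface satisfying $(\star)$. By contrast, the determination of $\Delta_1$, the forced splitting into $D_1,D_3$, and the reduction to Catanese's configuration by an element of $\mathrm{Aut}(X)$ are essentially bookkeeping, as is the final appeal to Proposition~\ref{5-tuple for all surface in M} confirming the correspondence is bijective.
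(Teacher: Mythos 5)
Your overall strategy (reconstruct the bidouble-cover data of $\pi\colon S'\to X$ from Lemma~\ref{canonical model is a bidouble cover} and match it with Catanese's configuration) is the same as the paper's, and your determination of $\Delta_1=D_1\cup D_3\equiv 4(L-E_4)$ via the conic-bundle pencil is fine. But the step you yourself flag as ``the hard part'' --- forcing the reduced divisor $D_2$ to decompose as $Q+L_{12}+L_{13}+L_{23}+E_4$ --- is precisely where your proposal has a genuine gap, and the route you sketch (normal crossings, the $(0,1,3)$ singularity type, compatibility with $\Delta_2$) is not the argument that closes it. The paper first pins down the class $D_2=(\pi_2)_*R_2\equiv 5L-3E_1-3E_2-3E_3+E_4$ directly from the fixed locus $R_2$ of the involution $i$ (not from integrality of the $L_i$ plus $K^2=5$, as you propose), and then uses a dimension count: $\dim|D_2|=2$, while the divisors of the shape $Q+L_{12}+L_{13}+L_{23}+E_4$ already sweep out a $2$-dimensional sub-linear system $\mathcal{L}\subset|D_2|$, whence $|D_2|=\mathcal{L}$ and \emph{every} member has this shape; reducedness of the branch locus then forces $Q$ to be a smooth conic. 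Without this (or an equivalent) argument you have no way to exclude degenerate members of $|D_2|$, and your proposal does not supply one.

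A second, smaller gap: the splitting of $\Delta_1$ into $D_1$ and $D_3$ is \emph{not} ``imposed by the node structure'' alone. The nodes of $\mathcal{C}$ only tell you that $L_{i4}$ and $E_i$ must be separated between $D_1$ and $D_3$ for each $i$, which leaves $2^3$ a priori possible distributions (e.g. $D_1=L_{14}+E_2+L_{34}$), and these are not all conjugate under $\mathrm{Aut}(X)$ in a way compatible with the rest of the data. The paper rules these out by the requirement that $D_1+D_2$ and $D_3+D_2$ be $2$-divisible in $\mathrm{Pic}(X)$ (the evenness condition of \cite{Cat84}~(2.1)), writing $D_1\equiv 3L+\sum a_{1i}E_i$, $D_3\equiv L+\sum a_{3i}E_i$ with all $a_{1i},a_{3i}$ odd, and then invoking reducedness. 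You will need this parity argument; ``bookkeeping'' understates it. (Also, the closing appeal to Proposition~\ref{5-tuple for all surface in M} is not needed for this implication and the fallback to case~II via Proposition~\ref{equivalent of two families} does not arise once $D_2$ is normalised as above.)
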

\begin{proof} By Lemma \ref{canonical model is a bidouble cover}, we only need to prove that the effective divisors $(D_1,D_2,D_3)$ and divisors $(L_1,L_2,L_3)$ of the bidouble cover $\pi: S'\rightarrow X$  are of the same form as in   section 3.2.
	
	We use the    notation  $L, E_i$,  $L_{ij}$, $Q$, $C$  of section 3.
	By  Lemma \ref{canonical model is a bidouble cover}, if we denote by $R_2$ the fixed part  of the involution $i$ on $S'$, then $D_2=(\pi_2)_* R_2\equiv 5L-3E_1-3E_2-3E_3+E_4$. Since dim$|D_2|=2$ and $|D_2|$ contains a 2-dimensional sub-linear system  $\mathcal{L}$ of divisors of the form $Q+L_{12}+L_{23}+L_{13}+E_{4}$, we see $|D_2|=\mathcal{L}$. So $D_2$ must be of the form $Q+L_{12}+L_{23}+L_{13}+E_{4}$. Since $D_2$ is reduced, $Q$ must be the strict transform of a smooth conic, thus  $D_2$ is always smooth.
	
	Since the branch divisor  of the bidouble cover $\pi: S'\rightarrow X$ is $D_2\cup \Delta_1$, we get  $D_1\cup D_3=\Delta_1\equiv 4L-4E_4$. Since  $D_1+D_2$ and $D_3+D_2$ are both  effective even divisors (cf. \cite{Cat84} (2.1)),  we can assume   $D_1\equiv 3L + \Sigma a_{1i}E_i, D_3\equiv L+\Sigma a_{3i}E_i$, where $a_{1i}, a_{3i}$ are   odd  integers for $i=1,2,3,4$, and $a_{14}+a_{34}=-4$. Since $D=D_1\cup D_2\cup D_3$ is reduced, one can easily show  that up to an automorphism of $X$,  $D_1=L_{14}+L_{24}+L_{34}\equiv 3L-E_1-E_2-E_3-3E_4$; $D_3=C+E_{1}+E_{2}+E_{3}\equiv L+E_1+E_2+E_3-E_4$, which are  the same as in section 3.
	
	Since $Pic(X)$ has no nontrivial 2-torsion elements, $(L_1,L_2,L_3)$ are uniquely determined by $(D_1,D_2,D_3)$ through the linear equivalence relations $2L_i\equiv D_j+D_k (\{i,j,k\}=\{1,2,3\})$.  Therefore $S\in M$.
\end{proof}

Combining Propositions \ref{5-tuple for all surface in M} and   \ref{inverse of 5-tuple}, we get  the  following theorem, which  plays a crucial role in proving  the  (Zariski) closedness of $\mathcal{M}$.
\begin{theorem}
	\label{one to one correspondence}
	Every surface $S\in M$  satisfies  condition $(\star)$. Conversely, If $S$ is a minimal surface  satisfying  condition $(\star)$, then $S\in M$.
\end{theorem}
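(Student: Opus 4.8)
The plan is to recognize that this theorem is simply the conjunction of two results already proved in this section, so the proof amounts to assembling them. The forward implication --- that every surface $S \in M$ satisfies condition $(\star)$ --- is precisely the content of Proposition \ref{5-tuple for all surface in M}. The reverse implication --- that every minimal surface satisfying $(\star)$ belongs to $M$ --- is precisely Proposition \ref{inverse of 5-tuple}. Hence I would prove the theorem by citing these two propositions, one for each direction, and no further argument is required.

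It is nevertheless worth recalling where the genuine work sits, since that is what the theorem packages. For the forward direction, the substantive step is to compute the Catanese-Pignatelli $5$-tuple $(B, V_1, \tau, \xi, \omega)$ directly for a \emph{general} $S \in M$ (Lemma \ref{compute the 5-tuple}): one reads off $\tau = \eta_1 + \eta_2 + \eta_3$ from the three nodes of the conic bundle $\mathcal{C}$, identifies $V_2 = \mathcal{O}_B(2\cdot 0)^{\oplus 3}$ via the dimension count $h^0(V_2(-2\cdot 0)) = 3$, and pins down $V_1 = E_{[0]}(2,1)$ through the birational model $B^{(2)}$. The general case is then propagated to \emph{arbitrary} $S \in M$ by the degeneration Lemma \ref{limit V1 V2}, whose engine is upper semicontinuity of $h^0(V_2(-2\cdot 0))$ together with the case analysis of the rank-$2$ and split possibilities for the limit of $V_2$ on the central fibre.

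For the reverse direction, the heart of the matter is Lemma \ref{canonical model is a bidouble cover}: starting from $(\star)$, one writes the conic bundle in the normal form $a_1^2 y_1^2 + a_2^2 y_2^2 + a_3^2 y_3^2 = 0$, exploits the sign involution $u \mapsto -u$ on $B$ to produce a second double cover, and verifies both that the resulting quotient $X'$ is the degree-$5$ Del Pezzo surface and that the composite $S' \to X$ is genuinely a bidouble cover. The one place where a nontrivial vanishing is needed is the check that the second branch divisor is $j$-invariant, which reduces to showing $H^0((3T - F)|_X) = 0$; this follows from an effectivity argument using the ampleness of $-K_X$. Once the bidouble structure is in hand, Proposition \ref{inverse of 5-tuple} matches the branch data $(D_1, D_2, D_3)$ and the $(L_1, L_2, L_3)$ with those of section 3 by linear-equivalence bookkeeping and reducedness.

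Since both directions are already in place, there is no real obstacle remaining for the theorem itself; the only point requiring care is that the two propositions jointly cover \emph{all} minimal surfaces on each side rather than merely the general ones. This is indeed so: Proposition \ref{5-tuple for all surface in M} is stated for any $S \in M$, and Proposition \ref{inverse of 5-tuple} for any minimal surface satisfying $(\star)$. The equivalence is therefore a clean bijection between the two classes, and the theorem follows at once.
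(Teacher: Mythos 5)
Your proof is correct and matches the paper exactly: the theorem is stated in the paper as an immediate combination of Proposition \ref{5-tuple for all surface in M} (forward direction) and Proposition \ref{inverse of 5-tuple} (converse), which is precisely what you do. Your accompanying summary of where the substantive work lies is also accurate.
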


\begin{pro}
	\label{closed}
	$\mathcal{M}$ is a Zariski closed subset of $\mathcal{M}^{5,2}_{1,1}$, i.e. $\mathcal{M}=\overline{\mathcal{M}}$.
\end{pro}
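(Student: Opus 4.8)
The plan is to deduce the closedness of $\mathcal{M}$ directly from the one-to-one correspondence of Theorem \ref{one to one correspondence} together with the limit statement of Lemma \ref{limit V1 V2}. Since $\mathcal{M}\subseteq\overline{\mathcal{M}}$ holds by definition (Definition \ref{definition of M}), it suffices to prove the reverse inclusion $\overline{\mathcal{M}}\subseteq\mathcal{M}$, i.e.\ that every point of the Zariski closure is represented by a surface lying in $M$.

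First I would observe that $\mathcal{M}$ is a constructible subset of $\mathcal{M}^{5,2}_{1,1}$: by Definition \ref{definition of M} and Lemma \ref{dimM 1} it is the image of the irreducible $3$-parameter family $M$ under the moduli map, so by Chevalley's theorem it is constructible and irreducible, and therefore contains a dense Zariski-open subset $U$ of $\overline{\mathcal{M}}$, with $U\subseteq\mathcal{M}$. Fix an arbitrary point $[S_0]\in\overline{\mathcal{M}}$. Because $\overline{\mathcal{M}}$ is irreducible of dimension $3$ (Lemma \ref{dimM 1}) and $U$ is dense open in it, one can choose an irreducible curve $T_0\subseteq\overline{\mathcal{M}}$ passing through $[S_0]$ and meeting $U$. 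After normalizing $T_0$, performing a finite base change to rigidify the moduli problem, and applying simultaneous resolution of the rational double points that may appear on the canonical models along the family, I obtain a smooth connected $1$-parameter family $p:\mathcal{S}\rightarrow T$ of minimal surfaces with $\mathcal{S}_0$ the minimal model of the surface classified by $[S_0]$ and with $\mathcal{S}_t\in M$ for all $t$ in a dense open subset of $T$ (since such $t$ map into $U\subseteq\mathcal{M}$, the image of $M$).

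With this family in hand the conclusion is immediate. By Theorem \ref{one to one correspondence}, each general fibre $\mathcal{S}_t\in M$ satisfies condition $(\star)$; then Lemma \ref{limit V1 V2} forces the central fibre $\mathcal{S}_0$ to satisfy $(\star)$ as well; and the converse half of Theorem \ref{one to one correspondence} then gives $\mathcal{S}_0\in M$, that is $[S_0]\in\mathcal{M}$. As $[S_0]$ was arbitrary, this proves $\overline{\mathcal{M}}\subseteq\mathcal{M}$, and hence $\mathcal{M}=\overline{\mathcal{M}}$.

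The only genuinely delicate step is the construction of the honest family $p:\mathcal{S}\rightarrow T$ of smooth minimal surfaces out of the abstract curve $T_0$ in the (coarse) Gieseker moduli space: one must handle the coarse-versus-fine moduli issue by a finite or \'etale base change, and must pass from the canonical models --- which may acquire rational double points in the limit --- to their minimal resolutions, using that such singularities admit simultaneous resolution after base change. Once the family is produced, everything reduces to the two previously established results and no further computation is needed; in particular, combined with Proposition \ref{irreducible component} this upgrades $\overline{\mathcal{M}}$ from ``an irreducible component'' to the statement that $\mathcal{M}=\overline{\mathcal{M}}$ is itself a genuine (closed) irreducible component of $\mathcal{M}^{5,2}_{1,1}$.
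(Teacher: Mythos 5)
Your proposal is correct and follows essentially the same route as the paper: reduce to a one-parameter family whose general member lies in $M$ (via base change and simultaneous resolution of the canonical models), then apply Lemma \ref{limit V1 V2} to show the central fibre satisfies condition $(\star)$ and Theorem \ref{one to one correspondence} to conclude it lies in $M$. The only difference is that you spell out the reduction step (constructibility via Chevalley and the choice of a curve through an arbitrary point of $\overline{\mathcal{M}}$ meeting the dense open subset) which the paper leaves implicit in its opening ``it suffices to show'' sentence.
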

\begin{proof}	It suffices to show:  if $\mathcal{S}'\rightarrow T$ is  a 1-parameter connected flat family  of canonical  models of algebraic surfaces with   $\mathcal{S}'_t$ ($0\neq t\in T$)  a general surface in $\mathcal{M}$, then $\mathcal{S}'_0\in \mathcal{M}$.
	
	Taking a base change (we still denote by $T$ the base curve) and the    simultaneous resolution, we get a connected smooth family  $\mathcal{S}\rightarrow T$ with  $\mathcal{S}_t$ $(t\in T)$    the minimal model of $S'_t$.  Note that  for each $0\neq t\in T$, $\mathcal{S}_t$ is  a general surface in $M$. By Lemma \ref{limit V1 V2} and Theorem \ref{one to one correspondence}, we see that $\mathcal{S}_0\in M$, hence $\mathcal{S}'_0\in \mathcal{M}$.
\end{proof}

At the end of this section, we give some remarks on the branch  curve of the bidouble cover $\pi: S'\rightarrow X$, which we shall use in   the next section.
\begin{remark}
	\label{remark for branch divisors}
	(1) The choice of $(D_1,D_2,D_3)$ in Lemma \ref{inverse of 5-tuple} is not unique (e.g. there are two choices in section 2), but all choices are equivalent up to an automorphism of $X$.
	
	(2) From  Lemma \ref{inverse of 5-tuple}, we see that  each $D_i$ $(i=1,2,3)$ is   smooth. In fact, the only possible singularity on the branch divisor $D=D_1\cup D_2\cup D_3$ is a  node  coming from $B'\cap C$ (here we use notation  in section 2):

	Since   $K_S^2=5$, the conic $B' \subset \mathbb{P}^2$ cannot have the same tangent direction with $A_i$  at  $P_i$ for any $i\in \{1,2,3\}$.  Otherwise  we would finally get a minimal   surface with $K_S^2<5$. So the only possible   singularity  comes from $B'\cap C$ when   $B'$ has the same tangent direction with  $C$ at $B'\cap C$.  This is a node on $S'$.
	
	When  $S'$ is singular, $\mathcal{C}$ is the same as before since $\Delta_1=D_1\cup D_3$ is the same. In particular, $\mathcal{C}$  has exactly three singular fibres. Moreover,
	the branch curve of the  double cover $\pi_2: S'\rightarrow \mathcal{C}$  still has  5 irreducible and connected components: four smooth sections  and a singular curve that is algebraically equivalent to a bisection.
\end{remark}

\section{$\mathcal{M}$ is a connected   component of $\mathcal{M}^{5,2}_{1,1}$}
In this section, we study the deformation of the branch curve of  the  double cover $\pi_2: S'\rightarrow \mathcal{C}\subset \mathbb{P}(V_2)$ (where $\mathcal{C}$ is the conic bundle in Catanese-Pignatelli's structure theorem for genus 2 fibrations, see Lemma \ref{canonical model is a bidouble cover}) and
prove that $\mathcal{M}$ is an analytic open subset of $\mathcal{M}^{5,2}_{1,1}$, i.e.
\begin{pro}
	\label{small deformation open}
	Let $S_0\in M$ and  let $S$ be  a small deformation  $S_0$. Then  $S\in M$.
\end{pro}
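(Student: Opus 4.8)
The plan is to show that every small deformation $S$ of $S_0$ again satisfies condition $(\star)$; by Theorem \ref{one to one correspondence} this gives $S\in M$, and hence proves that $M$ is analytically open in $\mathcal{M}^{5,2}_{1,1}$.

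First I would record the data that are automatically preserved. Since $p_g$, $q$, $K^2$ and the genus $g$ of the Albanese fibre are deformation invariants, $S$ has $p_g=q=1$, $K^2=5$ and a genus $2$ Albanese fibration $f\colon S\to B$ over an elliptic curve. By the topological invariance of the number of direct summands of $V_1$ (Remark \ref{deformation invariant}(i)), $V_1$ stays an indecomposable rank $2$ bundle of degree $1$, so after fixing the origin of $B$ we have $V_1=E_{[0]}(2,1)$. Thus the only parts of the $5$-tuple that could a priori escape the conditions in $(\star)$ are $V_2$ and $\tau$.

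The heart of the argument is to control $V_2$ and $\tau$ by following the branch curve of the double cover $\pi_2\colon S'\to\mathcal{C}$ as $S_0$ deforms. For $S_0$, Remark \ref{remark for branch divisors} shows that this branch curve has five components — four smooth sections and a curve algebraically equivalent to a bisection — and that $\mathcal{C}$ has exactly three nodal fibres. Realising this configuration in the symmetric-product model $B^{(2)}$ of Lemma \ref{compute the 5-tuple}, where the sections are the $D_{\eta_i}$ together with $D_0$ and the nodal fibres sit over $\eta_1,\eta_2,\eta_3$, I would show that under the small deformation the three nodes remain three nodes, the four sections remain sections, and the bisection remains a bisection. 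The upshot is that the conic bundle retains the extra involution $u\mapsto -u$ used in Lemma \ref{canonical model is a bidouble cover}, so that $S'$ stays a bidouble cover of the Del Pezzo surface $X$ of degree $5$; since this forces $\mathcal{C}$ into the diagonal form $a_1^2y_1^2+a_2^2y_2^2+a_3^2y_3^2=0$, we recover $\tau=\eta_1+\eta_2+\eta_3$ and $V_2=\mathcal{O}_B(2\cdot0)^{\oplus 3}$, i.e. condition $(\star)$, and Theorem \ref{one to one correspondence} then gives $S\in M$.

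I expect the main obstacle to be exactly this control of $V_2$ and $\tau$ in the open direction, and in particular the special surfaces $S_0$ whose canonical model is singular. The semicontinuity of $h^0(V_2(-2\cdot0))$ runs the wrong way here, so the descent used in Lemma \ref{limit V1 V2} is unavailable, and for singular $S_0$ one cannot invoke the natural-deformation argument of Proposition \ref{irreducible component} directly. The delicate point is to prove genuine, not merely generic, stability of the reducible branch curve: one must rule out that the bisection becomes irreducible, or that $\tau$ drifts off the $2$-torsion while keeping the same numerical invariants. For the smooth case I would deduce stability from Proposition \ref{irreducible component}, since natural deformations of the bidouble cover $\pi\colon S_0\to X$ stay bidouble covers of the rigid surface $X$ and hence lie in $M$ by Proposition \ref{inverse of 5-tuple}; for the singular case I would combine the branch-curve description of Remark \ref{remark for branch divisors} with a dimension count against $\dim\mathcal{M}=3$ to conclude that no transverse deformations occur.
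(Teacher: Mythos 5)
Your overall skeleton matches the paper's: reduce to showing that every small deformation still satisfies condition $(\star)$, track the branch curve of $S'\to\mathcal{C}$ (in particular the four smooth sections of Remark \ref{remark for branch divisors}) through the deformation, and invoke Theorem \ref{one to one correspondence}. You also correctly identify the real difficulty: semicontinuity of $h^0(V_2(-2\cdot 0))$ points the wrong way, so the descent of Lemma \ref{limit V1 V2} cannot be reused. But at exactly that point your proposal has a gap. You pass from ``the four sections, the bisection and the three nodes persist'' to ``the conic bundle retains the involution $u\mapsto -u$, hence $S'$ stays a bidouble cover of $X$, hence $\mathcal{C}$ is diagonal and $V_2=\mathcal{O}_B(2\cdot 0)^{\oplus 3}$.'' This is circular: the involution of Lemma \ref{canonical model is a bidouble cover} is constructed on $\mathbb{P}(V_2)=B\times\mathbb{P}^2$ only \emph{after} one knows that $V_2$ is a sum of three copies of $\mathcal{O}_B(2\cdot 0)$ and that $\mathcal{C}$ has the diagonal equation, i.e.\ after condition $(\star)$ is already established. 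The paper supplies the missing bridge with two devices absent from your proposal: (i) Lemma \ref{deformation of curves} and Remark \ref{remark for deformation of curves}, which show that the fixed locus of the involution $\sigma'$ on the total space of the family keeps its four one-dimensional components, so the four sections genuinely persist (you assert this persistence but give no mechanism for it); and (ii) Lemma \ref{3 independet sections}, which converts ``any three of the four sections meet each fibre in non-collinear points'' into the splitting of $V_2$ as a direct sum of three line bundles. Once $V_2$ splits, its summands are pinned down not by the unavailable inequality $h^0(V_2(-2\cdot 0))\geq 3$ but by the \emph{vanishing} $h^0(V_2(-2p))=0$ for $p\neq 0$, which is an open condition and does propagate from the central fibre to nearby ones; this is the replacement for semicontinuity that you were looking for.

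Your two fallbacks do not close the gap either. Proposition \ref{irreducible component} (every small deformation is a natural deformation) is proved only for a \emph{general} $S\in M$, since it rests on the computation $h^1(T_S)=3=\dim\mathcal{M}$ there; it cannot be quoted for an arbitrary smooth member, let alone one whose canonical model is singular. And a dimension count against $\dim\mathcal{M}=3$ cannot prove openness at a special point: it does not exclude that $S_0$ also lies on a different irreducible component of $\mathcal{M}^{5,2}_{1,1}$, through which it could be deformed out of $M$ while keeping all numerical invariants. Ruling this out is precisely the content of Proposition \ref{small deformation open}, and for that one needs the uniform argument via the involution $\sigma'$ and Lemma \ref{3 independet sections}, which applies at every point of $M$ simultaneously.
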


Using Proposition \ref{small deformation open},  now we can prove the main theorem of this paper:

\begin{theorem}
	\label{connected}
	$\mathcal{M}$ is an irreducible  and  connected component of $\mathcal{M}^{5,2}_{1,1}$.
\end{theorem}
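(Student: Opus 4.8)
The plan is to assemble the final Theorem \ref{connected} from the results already established in the preceding sections, since the hard analytic and deformation-theoretic work has been localized into the earlier propositions. The statement asserts that $\mathcal{M}$ is both an \emph{irreducible} and a \emph{connected} \emph{component} of $\mathcal{M}^{5,2}_{1,1}$. I would first recall that irreducibility of $\mathcal{M}$ is exactly Lemma \ref{dimM 1}, which shows $\mathcal{M}$ is a $3$-dimensional irreducible subset. So the only remaining content is to upgrade ``subset'' to ``component'' and to establish connectedness.

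The key structural observation is that a connected component of $\mathcal{M}^{5,2}_{1,1}$ is precisely a subset that is simultaneously Zariski closed and analytically open (a nonempty, irreducible locally closed subset that is closed cannot be properly contained in a larger irreducible subset, and openness prevents it from being a proper closed subset of a component). First I would invoke Proposition \ref{closed}, which gives $\mathcal{M}=\overline{\mathcal{M}}$, so that $\mathcal{M}$ is Zariski closed; combined with Proposition \ref{irreducible component}, which identifies $\overline{\mathcal{M}}$ as an irreducible component, this already shows $\mathcal{M}$ is an irreducible component of $\mathcal{M}^{5,2}_{1,1}$. Next I would invoke Proposition \ref{small deformation open}: every small deformation of a surface $S_0\in M$ again lies in $M$, which is the statement that $\mathcal{M}$ is an analytic open subset of $\mathcal{M}^{5,2}_{1,1}$.

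The decisive step is to combine closedness and openness to deduce that $\mathcal{M}$ is a connected component. Since $\mathcal{M}$ is an irreducible component it is in particular contained in some connected component $\mathcal{N}$ of $\mathcal{M}^{5,2}_{1,1}$; I claim $\mathcal{M}=\mathcal{N}$. By Proposition \ref{small deformation open}, $\mathcal{M}$ is open in $\mathcal{N}$ (in the analytic topology), and by Proposition \ref{closed} it is closed in $\mathcal{N}$. Because $\mathcal{N}$ is connected and $\mathcal{M}$ is a nonempty subset that is both open and closed in $\mathcal{N}$, we conclude $\mathcal{M}=\mathcal{N}$. Hence $\mathcal{M}$ is itself a connected component, and being equal to the irreducible component $\overline{\mathcal{M}}$ it is irreducible as well. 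This yields Theorem \ref{theorem 5,2} as a consequence.

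The main obstacle is not in the present theorem but in the two propositions it relies on; the proof here is essentially a clopen-in-a-connected-space argument. The only subtlety I would be careful about is matching the topologies correctly: Proposition \ref{closed} is stated for the Zariski topology while Proposition \ref{small deformation open} concerns the analytic (small-deformation) topology, so I would phrase the final argument in the analytic topology, noting that a Zariski closed subset is also analytically closed, so that both the open and closed hypotheses hold simultaneously in the analytic category where the clopen argument applies.
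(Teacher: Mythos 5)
Your proof is correct and follows essentially the same route as the paper: combine Proposition \ref{closed} (Zariski closedness), Proposition \ref{small deformation open} (analytic openness), and irreducibility via a clopen-in-a-connected-component argument. The only cosmetic difference is that the paper converts analytic openness into Zariski openness by noting $\mathcal{M}$ is constructible, whereas you run the clopen argument in the analytic topology; both are valid.
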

\begin{proof}
	Since $\mathcal{M}$ is the image of $M$  in $\mathcal{M}^{5,2}_{1,1}$, it is a constructible subset of $\mathcal{M}^{5,2}_{1,1}$, thus analytic openness (Proposition \ref{small deformation open}) implies Zariski openness. Therefore, $\mathcal{M}$ is a Zariski open and closed (Proposition \ref{closed}) subset of $\mathcal{M}^{5,2}_{1,1}$, hence it is a connected component of  $\mathcal{M}^{5,2}_{1,1}$.
\end{proof}

Considering Theorem \ref{one to one correspondence}, we also have the following:
\begin{cor}
	The canonical models of minimal surfaces  satisfying   condition $(\star)$  constitute an irreducible and connected   component of $\mathcal{M}^{5,2}_{1,1}$.
\end{cor}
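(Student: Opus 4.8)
The plan is to read off the corollary from the one-to-one correspondence established in Theorem \ref{one to one correspondence} together with the structural result of Theorem \ref{connected}. The essential observation is that condition $(\star)$ is an intrinsic characterization that singles out exactly the surfaces of the family $M$; consequently the locus in $\mathcal{M}^{5,2}_{1,1}$ of canonical models of surfaces satisfying $(\star)$ coincides with $\mathcal{M}$, which has already been shown to be a connected component.

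Concretely, I would first invoke Theorem \ref{one to one correspondence}: a minimal surface $S$ satisfies $(\star)$ if and only if $S \in M$. Hence, at the level of isomorphism classes, the minimal surfaces satisfying $(\star)$ are precisely the members of $M$. Passing to canonical models and recalling that $\mathcal{M}^{5,2}_{1,1}$ is the Gieseker moduli space, whose points are canonical models, Definition \ref{definition of M} identifies the image of $M$ with $\mathcal{M}$; by the equivalence just recalled, this image is exactly the locus of canonical models of minimal surfaces satisfying $(\star)$. Theorem \ref{connected} then states that $\mathcal{M}$ is an irreducible and connected component of $\mathcal{M}^{5,2}_{1,1}$, which is the assertion.

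I do not expect a genuine obstacle, since the corollary is essentially a reformulation of the two theorems. The one point requiring care is that, by Remark \ref{deformation invariant}(iii), condition $(\star)$ involves the existence of a suitable neutral element $0 \in B$ for which $V_1$, $V_2$ and $\tau$ take the prescribed form; thus $(\star)$ must be read as a condition on the isomorphism class of $S$ (after an admissible choice of neutral element), not on a rigidified datum. With this understanding, the equivalence of Theorem \ref{one to one correspondence} is precisely the needed identification of the two loci, and the corollary follows immediately.
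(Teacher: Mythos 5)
Your proposal is correct and matches the paper's (implicit) argument exactly: the paper derives this corollary by combining Theorem \ref{one to one correspondence}, which identifies the surfaces satisfying condition $(\star)$ with the family $M$, with Theorem \ref{connected}, which shows that $\mathcal{M}$ is an irreducible and connected component of $\mathcal{M}^{5,2}_{1,1}$. Your additional care about reading $(\star)$ as a condition on the isomorphism class (after a choice of neutral element) is a reasonable clarification but does not change the substance.
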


To prove  proposition \ref{small deformation open}, we need the following two Lemmas.
\begin{lemma}
	\label{deformation of curves}
	Let  $p: \mathcal{S}\rightarrow \Delta$  be a smooth family of surfaces of general type  parametrized by a small disc $\Delta \subset \mathbb{C}$. Assume that for each $t\in \Delta$, there is an involution $\sigma_t$ on  $\mathcal{S}_t:=p^{-1}(t)$, which induce an involution $\sigma$ on $\mathcal{S}$. If   the fixed part $Fix(\sigma_0)$  of $\sigma_0$  has $n$ connected  components of dimension 1 and   $m$ isolated  points, then  $Fix(\sigma_t) (0\neq t\in \Delta)$ also  has $n$ connected  components of dimension 1 and    $m$ isolated  points.
\end{lemma}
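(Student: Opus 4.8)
The plan is to analyze the \emph{total} fixed locus $F:=\mathrm{Fix}(\sigma)\subset\mathcal{S}$ as a variety over $\Delta$. Since $\sigma$ is a holomorphic involution of the smooth threefold $\mathcal{S}$, its fixed locus $F$ is a smooth (possibly disconnected, and a priori of varying dimension on its components) complex submanifold: near any fixed point the action can be linearized, and in linearizing coordinates $F$ is cut out by the $(-1)$-eigenspace, hence smooth. Because each $\sigma_t$ acts on $\mathcal{S}_t$, the involution $\sigma$ commutes with $p$, so $p$ restricts to $p|_F\colon F\to\Delta$; and since $\mathcal{S}\to\Delta$ is proper and $F\subset\mathcal{S}$ is closed, $p|_F$ is proper. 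The entire proof reduces to reading off the structure of this restricted map.

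First I would carry out the local eigenspace computation. Fix $x\in F$ over $t=p(x)$ and choose fiber-adapted coordinates $(z,t)$ so that $\sigma(z,t)=(\sigma_t(z),t)$; then $d\sigma_x$ is block lower-triangular with fiber block $A:=d(\sigma_t)_x$ (eigenvalues $\pm1$, since $A^2=I$) and a $+1$ along the base direction, and the identity $\sigma^2=\mathrm{id}$ forces the off-diagonal column $b$ to lie in the $(-1)$-eigenspace of $A$. A short linear-algebra check then shows the key \emph{transversality} fact: the $(+1)$-eigenspace of $d\sigma_x$ always surjects onto $T_t\Delta$. More precisely, if $x$ lies on a fixed curve of $\sigma_t$ (so $A$ has eigenvalues $1,-1$) this eigenspace is $2$-dimensional and surjects onto $T_t\Delta$, whereas if $x$ is an isolated fixed point of $\sigma_t$ (so $A=-I$) it is $1$-dimensional and maps \emph{isomorphically} onto $T_t\Delta$. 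Since the dimension of the smooth manifold $F$ is locally constant, $F$ splits as a disjoint union $F^{(2)}\sqcup F^{(1)}$ of its $2$- and $1$-dimensional parts, with $p|_{F^{(2)}}$ a submersion whose fibers are the fixed curves of $\sigma_t$, and $p|_{F^{(1)}}$ an \'etale map (a local biholomorphism) whose fibers are the isolated fixed points of $\sigma_t$.

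To conclude I would invoke two standard fibration arguments over the contractible base. The map $p|_{F^{(2)}}\colon F^{(2)}\to\Delta$ is a \emph{proper submersion}, so by Ehresmann's fibration theorem it is a locally trivial $C^\infty$ fiber bundle over the connected disc $\Delta$; hence all fibers are diffeomorphic and the number $n$ of connected components of the fixed-curve part is constant. The map $p|_{F^{(1)}}\colon F^{(1)}\to\Delta$ is proper with discrete (hence, being compact, finite) fibers and is a local homeomorphism, so it is a finite covering; over the simply connected disc it is trivial, whence the number $m$ of isolated fixed points is constant. Reading off the values at $t=0$ gives the statement. I expect the main obstacle to be precisely the transversality claim of the second paragraph, namely that fiber-preservation of $\sigma$ forces the $(+1)$-eigenspace of $d\sigma_x$ to meet the base direction, so that $p|_F$ has no vertical tangents and is \'etale on $F^{(1)}$ and submersive on $F^{(2)}$; once this and the smoothness of $F$ (from linearizability of the holomorphic involution) together with properness of the family are in hand, the remaining steps are routine.
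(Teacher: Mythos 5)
Your proof is correct, and while it shares the paper's starting point --- linearizing the holomorphic involution $\sigma$ near a fixed point and exploiting that fiber-preservation plus smoothness of $p$ forces the $(+1)$-eigenspace of $d\sigma_x$ to dominate the base direction --- it concludes by a genuinely different route. The paper uses the local analysis only to show that each $1$-dimensional component and each isolated fixed point of $\sigma_0$ \emph{persists} for small $t$, giving $n_t\geq n$ and $m_t\geq m$, and then gets the matching upper bound $n_t+m_t=h^0(\mathcal{O}_{Fix(\sigma_t)})\leq h^0(\mathcal{O}_{Fix(\sigma_0)})=n+m$ from upper semicontinuity along the smooth family $Fix(\sigma)\rightarrow\Delta$. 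You instead promote the local computation to a global structure statement: $Fix(\sigma)$ splits by dimension as $F^{(2)}\sqcup F^{(1)}$, with $p|_{F^{(2)}}$ a proper submersion (hence, by Ehresmann over the contractible disc, a trivial bundle whose fiber has $n$ components) and $p|_{F^{(1)}}$ a proper \'etale map (hence a trivial finite cover of degree $m$); this yields both inequalities at once and avoids any appeal to coherent-sheaf semicontinuity. Your transversality computation, which you correctly identify as the crux, checks out: $\sigma^2=\mathrm{id}$ forces the off-diagonal block $b$ into the $(-1)$-eigenspace of $A=d(\sigma_t)_x$, and solving $(A-I)v=-sb$ shows the $(+1)$-eigenspace surjects onto $T_t\Delta$ whether $A$ has eigenvalues $(1,-1)$ or $A=-I$; moreover your fiber-adapted coordinates automatically exclude the paper's third case $d\sigma_x=-I$, which would force $dp_x=0$ against smoothness of $p$. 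Two small points to make explicit: $\sigma_t\neq\mathrm{id}$ (so $A\neq I$ and your dichotomy is exhaustive --- automatic here since the involutions in the application are nontrivial), and $F^{(1)},F^{(2)}$ are unions of connected components of the closed set $Fix(\sigma)$, hence closed in $\mathcal{S}$, so properness restricts to them. The one thing the paper's argument buys that yours does not is robustness in the singular setting of Remark \ref{remark for deformation of curves}, where the total fixed locus need not be smooth and your Ehresmann step is unavailable.
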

\begin{proof} Let  $C_0^1,C_0^2,...,C_0^n$ be  the $n$ connected 1-dimensional components  of $Fix(\sigma)$ and $Q_0^1,...Q_0^m$ be the $m$ isolated    points of $Fix(\sigma_0)$. Take $n+m$ small open subsets $U_1,U_2,...,U_{n+m}$ on $\mathcal{S}$ such that $U_i\supset C_0^i$ $(1\leq i\leq n)$, $U_{n+i}\supset Q_i$ $(1\leq i\leq m)$ and $\bar{U_i}\cap \bar{U_j}=\emptyset$ for $i\neq j$. By choosing $\Delta$ small enough, we can assume that  $p|_{U_i}: U_i\rightarrow \Delta$ is surjective for $i=1,2...,n+m$.

	For $1\leq i\leq n$, take a point  $P_0^i\in C_0^i$;  for $n+1\leq n+i\leq n+m$,  let  $P_0^{n+i}:=Q_0^i$.  Choosing  a suitable  coordinate  system $(x,y,z)$ on $U_i$, we can assume $P_0^i=(0,0,0)$ and the action of $\sigma$ on $U_i$ is linear. Hence  the action    is
	(i) $(x,y,z)\mapsto (-x,y,z)$,   (ii)  $(x,y,z)\mapsto (-x,-y,z)$ or (iii) $(x,y,z)\mapsto (-x,-y,-z)$. In case (iii), $P_0^i$ is a singular point of $p$ (see \cite{Cai09} Lemma 1.4), contradicting our assumption that $p$ is smooth.

	In case (i),   $Fix(\sigma) \cap U_i$    is of dimension 2, thus it cannot be contained in $S_0$ since $\sigma|_{S_0}=\sigma_0$. In this case, $Fix(\sigma_0)\cap U_i=C^i_0$ and  $Fix(\sigma) \cap U\rightarrow \Delta$ is surjective,  hence there is  a connected component $C^i$ of $Fix(\sigma)\cap U_i$ that maps  surjectively to $\Delta$.

	In case  (ii), we have $p^*t=cz+$higher order terms.  Since $p$ is smooth, $c\neq0$. At $t=0$, the equation $p^*t=x=y=0$ has exactly one solution $(0,0,0)$ in $U_i$, thus  $P_0^i$ is an isolated fixed point of $\sigma_0$.  If we take  $\Delta$ and $U_i$ small enough, $p^*t=x=y=0$   has   one solution for any $t\in\Delta$. Thus $Fix(\sigma)\cap U_i= {\{x=y=0\}\cap U_i}\rightarrow \Delta$ is bijective.
	
	Now assume that for $0\neq t\in \Delta$,  $Fix(\sigma_t)$ has $n_t$ connected 1-dimensional components and $m_t$ isolated points, then we have  $n_t\geq n$, $m_t\geq m$. On the other hand,  since  we have a smooth  family $q:=p|_{Fix(\sigma)}: Fix(\sigma)\rightarrow \Delta$,  $Fix(\sigma_t)=q^{-1}(t)$ is smooth for each $t\in T$. By  the upper semi-continuity, we have  $n_t+m_t=h^0(\mathcal{O}_{Fix(\sigma_t)})\leq h^0(\mathcal{O}_{Fix(\sigma_0)})=n+m$.  Therefore $n_t=n$ and $m_t=m$.
\end{proof}

\begin{remark}
	\label{remark for deformation of curves}
	If we replace the smooth family $p:\mathcal{S}\rightarrow \Delta$ with   the  flat family $p': \mathcal{S}'\rightarrow \Delta$ (here $\mathcal{S}_t':=p'^{-1}(t)$ is the canonical model of $\mathcal{S}_t$)  in the above 	lemma,   using a similar argument, one can show:
	if $Fix(\sigma_0)$ contains  $n$ smooth connected 1-dimensional components, then
	$Fix(\sigma_t)$ $(0\neq t\in \Delta)$ also contains  $n$ smooth connected 1-dimensional components.
\end{remark}

\begin{lemma}
	\label{3 independet sections}
	Let $V$ be  a rank 3 vector bundle over an elliptic curve $B$. If  the total space $\mathbb{P}(V^\lor)$ (sometimes we just write $\mathbb{P}(V)$ if   no confusion) of $V$ has three independent sections $s_i: B\rightarrow \mathbb{P}(V^\lor)$(`independent' means for any fibre $F$ of $\mathbb{P}(V^\lor)\rightarrow B$, the three points $s_i(B)\cap F$ are not contained in any line in $F$), then $V$ is a direct sum of three line bundles.
\end{lemma}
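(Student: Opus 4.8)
The plan is to translate each section into a saturated sub-line-bundle of $V$ and then recognize the independence hypothesis as the statement that these three subbundles realize $V$ as their direct sum, fibre by fibre. Since the argument is local on $B$ over each fibre, I do not expect to use anything about $B$ being elliptic beyond its being a smooth curve.

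First I would recall the standard dictionary between sections of a projective bundle over a smooth curve and saturated sub-line-bundles. Because $B$ is smooth and $V$ is locally free, each section $s_i : B \to \mathbb{P}(V^\lor)$ corresponds to a surjection $V^\lor \twoheadrightarrow M_i$ onto a line bundle $M_i$; dualizing gives a sub-line-bundle $L_i := M_i^\lor \hookrightarrow V$ which is \emph{saturated}, i.e. the quotient $V/L_i$ is locally free. In particular, for every $b \in B$ the fibre $(L_i)_b$ is a genuine $1$-dimensional subspace of $V_b$, and under the identification of the fibre $F_b$ of $\mathbb{P}(V^\lor) \to B$ with the $\mathbb{P}^2$ of $1$-dimensional subspaces of $V_b$, the point $s_i(b)$ is exactly the line $(L_i)_b$.

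Next I would assemble the three inclusions $\iota_i : L_i \hookrightarrow V$ into the natural evaluation morphism $\phi := (\iota_1,\iota_2,\iota_3) : L_1 \oplus L_2 \oplus L_3 \to V$, a morphism of vector bundles of the same rank $3$. The key step is to read the independence hypothesis fibrewise: for fixed $b$, the three points $s_1(b),s_2(b),s_3(b)$ fail to lie on a common line in $F_b$ precisely when the three lines $(L_1)_b,(L_2)_b,(L_3)_b$ are linearly independent and span $V_b$, i.e. when $V_b = (L_1)_b \oplus (L_2)_b \oplus (L_3)_b$ (note that non-collinearity already forces the three points, hence the three lines, to be distinct). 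Thus the hypothesis says exactly that $\phi_b$ is an isomorphism for every $b \in B$.

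Finally, a morphism of vector bundles of equal rank that is an isomorphism on every fibre is an isomorphism of vector bundles: its determinant is a section of $\det\!\bigl(\bigoplus_i L_i\bigr)^\lor \otimes \det V$ that vanishes nowhere, hence is invertible. Therefore $\phi$ is an isomorphism and $V \cong L_1 \oplus L_2 \oplus L_3$, as desired. I expect no serious obstacle here; the only points requiring care are fixing the projectivization convention so that sections yield sub-line-bundles of $V$ (rather than of $V^\lor$), and invoking saturatedness to guarantee that each $(L_i)_b$ is nonzero, so that the fibrewise statement is literally the non-collinearity of the three points $s_i(b)$.
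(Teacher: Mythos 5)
Your proposal is correct and follows essentially the same route as the paper: each section is converted into a (saturated) sub-line-bundle of $V$, the non-collinearity hypothesis is read fibrewise as saying these three line subbundles span each fibre, and hence $V$ is their direct sum. You simply spell out more carefully the section/sub-line-bundle dictionary and the final step (a rank-preserving bundle map that is a fibrewise isomorphism has nowhere-vanishing determinant), which the paper leaves implicit.
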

\begin{proof} Denote by $V_b$ the affine 3-space of the restriction of $V$ to $b\in B$. Let $P_b^i:=s_i(B)\cap V_b$. Choose a coordinate system  $(x_b,y_b,z_b)$ for $V_b$ and  assume $P_b^i=(x_b^i,y_b^i,z_b^i)$. Since $\{P_b^i\}_{i=1,2,3}$ are not contained in any line in $V_b$, at each point $b\in B$, the three subspaces $\mathbb{C}(x_b^i,y_b^i,z_b^i)$ $(i=1,2,3)$ of $V_b$ span $V_b$. Thus the three  independent sections $s_i$ give three  sub-(line)-bundles $N^i(i=1,2,3)$ $(N^i_b=\mathbb{C}(x_b^i,y_b^i,z_b^i))$ of $V$, which generate $V$ over each point $b\in B$. Hence  $V$ is a direct sum of three  line bundles $N^i$ $(i=1,2,3)$.
\end{proof}

Now we   are in the situation to  prove Proposition \ref{small deformation open}.
\begin{proof}[Proof of Proposition \ref{small deformation open}]
	
	Let $p': \mathcal{S}' \rightarrow \Delta$ be a flat family of canonical models of surfaces of general type with $\mathcal{S}_0':=p'^{-1}(0)=S_0'$, where $S'_0$ is the canonical model of $S_0$. Taking    a base change (for simplicity we still denote by $\Delta$) and taking the simultaneous resolution, we have  a smooth family of minimal surfaces $p: \mathcal{S}\rightarrow \Delta$ with $\mathcal{S}_0:=p^{-1}(0)=S_0$.

	By Lemma \ref{limit V1 V2}, for $0\neq t\in \Delta$, $\mathcal{S}_t$ is  a minimal surface with $p_g=q=1, K^2=5,g=2$  and $V_1$   indecomposable. In particular, $\mathcal{S}_t$ has an   involution $\sigma_t$ induced by the Albanese fibration, which induces an   involution  $\sigma'_t$ on $\mathcal{S}'_t$.  The involution $\sigma'_t$ on each $\mathcal{S}'_t$ induces an involution $\sigma'$ on $\mathcal{S}'$. Let $\mathcal{C}:=\mathcal{S}'/{\sigma'}$, then we have a  flat family $\hat{p}: \mathcal{C}\rightarrow \Delta$.

	By Remark \ref{remark for branch divisors}, $Fix(\sigma'_0)$    contains  four  smooth sections.  By Remark \ref{remark for deformation of curves},  for $0\neq t\in \Delta$ $Fix(\sigma'_t)$ also contains four smooth sections, hence the branch curve of the double cover $\mathcal{S}'_t\rightarrow \mathcal{C}_t:=\hat{p}^{-1}(t)$ contains four smooth sections.

	\vspace{3ex}
	{\em Claim}: For $0\neq t\in \Delta$, $\mathbb{P}(V_2)$ has three  independent sections. Therefore $V_2$ is a direct sum of three  line bundles by    Lemma \ref{3 independet sections}.
	
	{\em Proof of the claim}: now we have a flat family $\hat{p}: \mathcal{C}\rightarrow \Delta$ of conic bundles over elliptic curves.
	Note that the smooth fibre of $\mathcal{C}_t\rightarrow B_t$ is a smooth conic in   $F$, and  any three of the four smooth sections intersect with $F$ at three distinct  points lying on the conic, thus they   are  not contained in any line in $F$. So we only need to consider the  singular  fibres of $\mathcal{C}_t$. Since $\mathcal{C}_0$ has only three  singular fibres (see  Remark \ref{remark for branch divisors}), for $0\neq t\in \Delta$, $\mathcal{C}_t$ has at most   three  singular fibres.
	
	Since    each singular fibre of $\mathcal{C}_0$  is a union of two distinct lines $L_0^1,L_0^2$, we see  that   for  $0\neq t\in \Delta$, each singular  fibre of $\mathcal{C}_t$ is also a  union of two   distinct lines $L_t^1,L_t^2$.
	Note that on $\mathcal{C}_0$, two of the  four smooth sections intersect only with $L_0^1$ and  the other two smooth sections intersect only with $L_0^2$, w.l.o.g. we can assume that $C_0^1,C_0^2$ intersect with $L_0^1$ and   $C_0^3,C_0^4$ intersect with $L_0^2$.  Since $C_0^i$ and $C_0^j$ $(j\neq i)$ are disjoint, using  a similar argument as Lemma \ref{deformation of curves}, for small $\Delta$, sections $C_t^1,C_t^2$ do not intersect with $L_t^2$, and sections  $C_t^3,C_t^4$ do not intersect with $L_t^1$.  Hence for $0\neq t\in \Delta$,  $C_t^1,C_t^2$ intersect with $L_t^1$ and $C_t^3,C_t^4$ intersect with $L_t^2$.  Thus any three  of the four sections intersect with  the singular fibre $L_t$    at three  points that are  not contained in any line in $F$. Therefore  for $0\neq t\in \Delta$, $\mathbb{P}(V_2)$ also has three  independent sections.
	
	\vspace{3ex}
	We have proved that for any $t\in \Delta$,  $V_2$ is a direct sum of three  line bundles. Now we use a similar  argument as  Lemma \ref{limit V1 V2} to show that each direct summand of $V_2$ is $\mathcal{O}_B(2\cdot 0)$:
	
	since for $t=0$, $h^0(V_2(-2p))=0$ for any $p\neq 0$,  shrinking $\Delta$ and using  the upper semi-continuity, we see  that for $0\neq t\in \Delta$,  $h^0(V_2(-2p))=0$ for any $p\neq 0$. Since $V_2$ is a direct sum of three  line bundles, this happens if and only if
	$V_2=\mathcal{O}_B(2\cdot0)\oplus \mathcal{O}_B(2\cdot0)\oplus \mathcal{O}_B(2\cdot0)$.
	
	{\color{blue}} Therefore for any $0\neq t\in \Delta$, $\mathcal{S}_t$ satisfies  condition $(\star)$. By Theorem \ref{one to one correspondence}, we conclude that  $\mathcal{S}_t\in M$.
\end{proof}

\section{The number of  direct summands of $f_*\omega_S^{\otimes 2}$ is not a deformation invariant}
Let $S$ be a minimal surface of general type with $p_g=q=1$ and let $f:S\rightarrow B:=Alb(S)$ be the Albanese fibration of $S$. Let $g$ be the genus of a general Albanese fibre. Set $V_n:=f_*\omega_S^{\otimes n}$ and denote by $\nu_n$ {\em the number of direct summands} of $V_n$. Catanese-Ciliberto \cite{CC91} proved that $\nu_1$ is a topological invariant, hence it is also a deformation invariant. In this section we show that $\nu_2$ is not a deformation invariant, which  gives a negative answer to Pignatelli's question (cf. \cite{Pig09} p. 3).

\vspace{3ex}
For later convenience, we fix a group structure for the genus one curve $B=Alb(S)$, denote  by $0$ its neutral element and  by $\tau$ a nontrivial 2-torsion point.    Let $E_{[0]}(2,1)$ be the unique  indecomposable vector bundle of rank two on $B$ with $\det E_{[0]}(2,1)\cong \mathcal{O}_B(0)$ (cf. \cite{Ati57}).

Denote by $\mathcal{M}_I^{3,2}, \mathcal{M}_{II}^{3,2}, \mathcal{M}_{III}^{3,2}$ the  subsets of $\mathcal{M}_{1,1}^{3,2}$   corresponding to surfaces with $p_g=q=1,K^2=3,g=2$ such that   $\nu_2=1, 2, 3$ respectively (cf. \cite{CP06} Definition 6.11). Then  we have $\mathcal{M}_{1,1}^{3,2}=\mathcal{M}_I^{3,2}\cup \mathcal{M}_{II}^{3,2} \cup \mathcal{M}_{III}^{3,2}$.

\vspace{3ex}
The main ingredient to prove that $\nu_2$ is not a deformation invariant  is the following
\begin{theorem}
	\label{M2 not empty}
	There exist minimal  surfaces with $p_g=q=1, K^2=3, g=2$ such that  $V_2=E_{[0]}(2,1)(0)\oplus \mathcal{O}_B(\tau)$,  i.e., $\mathcal{M}_{II}^{3,2}$ is not empty.
\end{theorem}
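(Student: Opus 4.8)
The plan is to construct a surface in $\mathcal{M}_{II}^{3,2}$ by exhibiting an admissible $5$-tuple $(B,V_1,\tau,\xi,\omega)$ in the sense of Catanese--Pignatelli's structure theorem and invoking the existence part of that theorem. Since for a genus $2$ fibration over an elliptic curve one has $\deg V_1=\chi(\mathcal{O}_S)=1$ and $K_S^2=2\deg V_1+\deg\tau$, fixing $\deg V_1=1$ and $\deg\tau=1$ forces $p_g=q=1$, $K^2=3$, $g=2$ automatically. I would therefore take $B$ an elliptic curve with origin $0$, set $V_1=E_{[0]}(2,1)$, and let $\tau$ be a nontrivial $2$-torsion point, viewed as the reduced divisor of degree $1$. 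Note that $\deg S^2(V_1)=3$ and $\deg V_2=\deg S^2(V_1)+\deg\tau=4$, which matches $\deg\big(E_{[0]}(2,1)(0)\oplus\mathcal{O}_B(\tau)\big)=(1+2)+1=4$, and the ranks agree as well, so the prescribed $V_2$ is numerically compatible with the extension $0\to S^2(V_1)\stackrel{\upsilon}{\rightarrow}V_2\to\mathcal{O}_\tau\to0$.

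The heart of the construction is realizing $V_2=E_{[0]}(2,1)(0)\oplus\mathcal{O}_B(\tau)$ as the middle term of this extension. First I would determine $S^2(E_{[0]}(2,1))$ explicitly: it is a rank $3$, degree $3$, hence slope $1$ semistable bundle on $B$ (a direct summand of the semistable $V_1\otimes V_1$), and using Atiyah's classification one pins down its indecomposable decomposition. Having $S^2(V_1)$ in hand, I would compute $\Ext^1(\mathcal{O}_\tau,S^2(V_1))$, which is governed by the fiber of $S^2(V_1)$ at $\tau$, and describe the middle terms of the resulting elementary modifications of $S^2(V_1)$ at $\tau$. The class $\xi$ we seek is one whose modification yields exactly $E_{[0]}(2,1)(0)\oplus\mathcal{O}_B(\tau)$; equivalently, I would exhibit an inclusion $S^2(V_1)\hookrightarrow E_{[0]}(2,1)(0)\oplus\mathcal{O}_B(\tau)$ whose cokernel is a length-$1$ skyscraper at $\tau$. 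The non-degeneracy of $\xi$ at $\tau$, namely that the cokernel of $\upsilon$ is exactly $\mathcal{O}_\tau$ and not supported on a thicker scheme, is precisely the admissibility requirement on the pair $(\tau,\xi)$.

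With the first four entries of the $5$-tuple fixed, I would build the graded $\mathcal{O}_B$-algebra $\mathcal{A}=\oplus_{n\ge0}\mathcal{A}_n$ and the associated conic bundle $\mathcal{C}=\mathbf{Proj}(\mathcal{A})\subset\mathbb{P}(V_2)$, checking that $\mathcal{C}$ has a single reducible fiber over $\tau$ as dictated by $\deg\tau=1$. It then remains to choose the branch datum $\omega\in|\mathcal{O}_{\mathcal{C}}(6)\otimes\pi_{\mathcal{C}}^*(\det V_1\otimes\mathcal{O}_B(\tau))^{-2}|$ generically and to verify that the resulting double cover $S'\to\mathcal{C}$ has at worst rational double points, so that $S'$ is the canonical model of a minimal surface $S$ of general type. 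Once the existence part of the Catanese--Pignatelli theorem applies, $S$ is a surface with $p_g=q=1$, $K^2=3$, $g=2$ whose $V_2$ is by construction $E_{[0]}(2,1)(0)\oplus\mathcal{O}_B(\tau)$; in particular $\nu_2=2$, so $S\in\mathcal{M}_{II}^{3,2}$ and the latter is nonempty.

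I expect the main obstacle to be the extension step: producing a class $\xi\in\Ext^1(\mathcal{O}_\tau,S^2(V_1))$ whose middle term is the prescribed, non-semistable bundle $E_{[0]}(2,1)(0)\oplus\mathcal{O}_B(\tau)$ while keeping the cokernel of $\upsilon$ equal to $\mathcal{O}_\tau$. This hinges on the precise decomposition of $S^2(E_{[0]}(2,1))$ and on a careful fiberwise analysis at $\tau$. A secondary difficulty is confirming that a generic $\omega$ yields only canonical singularities on $S'$, so that all admissibility hypotheses of the structure theorem are genuinely met and $S$ is minimal of general type with the asserted invariants.
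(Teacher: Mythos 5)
Your overall framework is the right one and coincides with the paper's: both arguments produce the surface through Catanese--Pignatelli's structure theorem, with $B$ elliptic, $V_1=E_{[0]}(2,1)$, $\deg\tau=1$, and your numerology ($\deg S^2(V_1)=3$, $\deg V_2=4$, matching ranks) is correct. But as written you defer exactly the two steps that constitute the proof, so there is a genuine gap. For the first step, the route via $\Ext^1(\mathcal{O}_\tau,S^2(V_1))$ is workable but not executed: one has $S^2(E_{[0]}(2,1))\cong\bigoplus_{i=1}^3\mathcal{O}_B(\eta_i)$ with $\eta_i$ the nontrivial $2$-torsion points, and taking $\tau=\eta_1$ the (unique up to scalar) nonzero maps $\mathcal{O}_B(\eta_2),\mathcal{O}_B(\eta_3)\to E_{[0]}(2,1)(0)$, together with the identity on $\mathcal{O}_B(\eta_1)$, give an injection $S^2(V_1)\hookrightarrow E_{[0]}(2,1)(0)\oplus\mathcal{O}_B(\eta_1)$ whose cokernel is a length-one skyscraper supported at $-\eta_2-\eta_3=\eta_1$ (stability of $E_{[0]}(2,1)(0)$ forces the two sub-line bundles to be saturated and distinct). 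None of this appears in your text, and it is the entire content of the claim that the prescribed $V_2$ occurs as a middle term. The paper, incidentally, never touches $\Ext^1$ at all: it fixes $V_2'$ in advance and writes the conic bundle directly inside $\mathbb{P}(V_2')$, which determines $\xi$ implicitly.

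The more serious gap is the second step. Saying ``choose $\omega$ generically and verify at worst RDPs'' is not an argument when $V_2$ contains the indecomposable summand $E_{[0]}(2,1)(0)$: you have no global relative coordinates on $\mathbb{P}(V_2)$, so you cannot exhibit members of the relevant linear systems, determine their base loci, or run a Bertini-type argument, and it is not even clear a priori that the system of relative cubics cutting out $\delta$ is nonempty and moves enough to avoid the node of $\mathcal{C}$. This is precisely where the paper's key device enters, and it is absent from your proposal: pass to the unramified double cover $\phi:\tilde B\to B$ trivializing the $2$-torsion bundle $\mathcal{O}_B(\tau-0)$, over which $V_1$ and $V_2$ pull back to direct sums of line bundles; write the conic bundle and the relative cubic as explicit $G$-invariant equations $a_1^2y_1^2+a_2^2y_2^2+a_3y_3^2$ and $b_1y_1^3+b_2y_2^3+b_3y_1y_2y_3$ for $G=\langle T_\eta\rangle$; check by a direct Jacobian computation that for a general choice the branch curve is smooth and misses the two $A_1$-points of $\tilde{\mathcal{C}}$; and then descend to $B$ by the $G$-action. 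Without this covering trick, or some substitute giving explicit control of the linear systems on $\mathbb{P}(V_2)$, the admissibility of the $5$-tuple cannot be verified and the construction does not close.
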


We shall prove Theorem \ref{M2 not empty} later. First   we show how   Theorem  \ref{M2 not empty}   gives a negative answer to  Pignatelli's question.
\begin{cor}
	\label{number v2 not a deformation invariant}
	$\nu_2$ is not a deformation invariant, hence  it is  not a topological invariant, either.
\end{cor}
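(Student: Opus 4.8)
The plan is to deduce the statement directly from Theorem~\ref{M2 not empty} together with the description of $\mathcal{M}_{1,1}^{3,2}$ due to Catanese--Pignatelli. First I would invoke Theorem~\ref{M2 not empty} to produce a minimal surface $S$ with $p_g=q=1$, $K^2=3$, $g=2$ and $\nu_2=2$; equivalently $[S]\in\mathcal{M}_{II}^{3,2}$, so this stratum is nonempty. The goal is then to exhibit, within a single connected family of deformations of $S$, surfaces for which $\nu_2$ takes a value different from $2$.

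To this end I would recall from \cite{CP06} that $\mathcal{M}_{1,1}^{3,2}$ is the union of exactly three $5$-dimensional irreducible and connected components, and that as a set $\mathcal{M}_{1,1}^{3,2}=\mathcal{M}_I^{3,2}\cup\mathcal{M}_{II}^{3,2}\cup\mathcal{M}_{III}^{3,2}$. The point $[S]$ lies in one of these components, say $Z$. By \cite{CP06} Proposition~6.15, $\mathcal{M}_{II}^{3,2}$ contains no irreducible component of $\mathcal{M}_{1,1}^{3,2}$; in particular $Z\not\subseteq\mathcal{M}_{II}^{3,2}$. Hence $Z$ must meet $\mathcal{M}_I^{3,2}\cup\mathcal{M}_{III}^{3,2}$, i.e.\ $Z$ contains surfaces with $\nu_2=1$ or $\nu_2=3$.

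Since $Z$ is irreducible and connected, the surface $S$ and these surfaces with $\nu_2\neq2$ are deformation equivalent, being parametrized by points of one connected component of the moduli space. Thus $\nu_2$ is not constant along connected families of surfaces of general type with $p_g=q=1$, $K^2=3$, $g=2$, which is exactly the assertion that $\nu_2$ is not a deformation invariant. Finally, because every topological invariant of such surfaces is in particular a deformation invariant, it follows at once that $\nu_2$ cannot be a topological invariant either.

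I expect the genuine mathematical difficulty to lie entirely in Theorem~\ref{M2 not empty}, whose proof must actually construct a surface with $V_2=E_{[0]}(2,1)(0)\oplus\mathcal{O}_B(\tau)$; granting that, the corollary is a short logical deduction. The one point worth spelling out carefully is why $[S]\in Z$ with $Z\not\subseteq\mathcal{M}_{II}^{3,2}$ really yields a deformation of $S$ to surfaces with different $\nu_2$: this rests on $Z$ being a single connected component and on $\mathcal{M}_{II}^{3,2}\cap Z$ being a proper (constructible) subset of $Z$, so that its complement in $Z$ is nonempty and can be reached by a path within $Z$.
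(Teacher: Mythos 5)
Your proposal is correct and follows essentially the same route as the paper: both reduce the corollary to Theorem~\ref{M2 not empty} plus the fact from Catanese--Pignatelli that $\mathcal{M}_{II}^{3,2}$ cannot contain a whole irreducible component of $\mathcal{M}_{1,1}^{3,2}$, so a surface with $\nu_2=2$ sits in a connected component also containing surfaces with $\nu_2=1$ or $3$. The only cosmetic difference is the citation: the paper's proof invokes \cite{CP06} Proposition~6.3 (the explicit list of the three components as $\overline{\mathcal{M}_I^{3,2}}$ and two pieces of $\overline{\mathcal{M}_{III}^{3,2}}$), whereas you use Proposition~6.15, which is exactly the reference the paper's introduction gives for this step.
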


\begin{proof}
	
	Catanese-Pignatelli  (cf. \cite{CP06} Proposition 6.3)  proved  that  $\mathcal{M}_{1,1}^{3,2}$ has exactly three  irreducible connected components: one  is  $\overline{\mathcal{M}_I^{3,2}}$ and two are contained in   $\overline{\mathcal{M}_{III}^{3,2}}$. By Theorem \ref{M2 not empty},  $\mathcal{M}_{II}^{3,2}$ is not empty.  Hence  either $\mathcal{M}_{II}^{3,2}\cap\overline{\mathcal{M}_I^{3,2}}$ or $\mathcal{M}_{II}^{3,2}\cap\overline{\mathcal{M}_{III}^{3,2}}$  is nonempty.
	In particular,  there is a minimal surface  with $\nu_2=2$  that can be deformed to a minimal surface  with  $\nu_2=1$ or $\nu_2=3$. Therefore  $V_2$ is not a deformation invariant.
	
	( We will show in the following Remark \ref{belong to M1}  that   the minimal surfaces we constructed in Theorem \ref{M2 not empty} belong to $\overline{\mathcal{M}_I}$. )
\end{proof}

Now we prove Theorem \ref{M2 not empty}.
\begin{proof}[Proof of Theorem \ref{M2 not empty}]
	Let $B$ be an elliptic curve and $N:=\mathcal{O}_B(\tau-0)$ be the  torsion line bundle of order 2 on $B$. Let  $V'_1:=E_{[0]}(2,1)$ and  $V'_2:=E_{[0]}(2,1)(0)\oplus N(0)$  be two vector bundles on $B$.  To  prove Theorem \ref{M2 not empty},  it suffices to show that
	there exists a relatively  minimal genus 2 fibration   $f:S\rightarrow B$ such that   $p_g(S)=q(S)=1, K_S^2=3, g=2$, $V_1=f_*\omega_B=V'_1$,  $V_2=f_*\omega_B^{\otimes 2}=V'_2$. (By the universal property of    Albanese map, $f$ must be the Albanese fibration of $S$.)

	By Catanese-Pignatelli's structure theorem for genus 2 fibrations (cf.  \cite{CP06} section 4),
	it suffices to find  a conic bundle $\mathcal{C}\in |\mathcal{O}_{\mathbb{P}(V'_2)}(2)\otimes \pi^*\det(V'_1)^{-2}|$ on $\mathbb{P}(V'_2)$ and an effective  divisor $\delta\in |\mathcal{O}_{\mathcal{C}}(3)\otimes \pi^*\mathcal{O}_B(-2\cdot 0-2\tau)|$ such that $\mathcal{C}$ contains exactly  one RDP  as singularities, $\delta$ does not contain the singular point of $\mathcal{C}$,  and the double cover  $X$ of $\mathcal{C}$ with branch divisor $\delta$ has at most RDP's as singularities.
	
	\vspace{3ex}
	To get global relative coordinates on the fibre of $\mathbb{P}(V'_2)$, we
	take  an unramified double covering  $\phi: \tilde{B}\rightarrow B $  such that $\phi^*N\cong \mathcal{O}_{\tilde{B}}$ and  $\phi^*0=\tilde{0}+\eta$ for some nontrivial  2-torsion point $\eta\in \tilde{B}$, where $\tilde{0}$ is the neutral element in the group structure of $\tilde{B}$ such that $\phi(\tilde{0})=0$. By \cite{Ish05} Theorem 2.2, Lemma 2.3,  we have
	$\phi^*E_{[0]}(2,1) \cong \mathcal{O}_{\tilde{B}}(p)\oplus \mathcal{O}_{\tilde{B}}(p')$, where $\mathcal{O}_B(\phi_*(p)-0)\cong N$ (cf. \cite{Fri98} Chapter 2, Proposition 27) and $p'=p\oplus \eta$ in the group law of $\tilde{B}$.
	
	Now let $\tilde{E}:=\phi^*(E_{[0]}(2,1)\oplus N)$, then we have the following commutative diagram:
	$$\xymatrix{\mathbb{P}(\tilde{E})\ar[rrr]^{\Phi}\ar[d]^{\tilde{\pi}}& &&\mathbb{P}(E_{[0]}(2,1)\oplus N)\ar[d]^{\pi} \\
		\tilde{B}\ar[rrr]^{\phi} &&&B}
	$$
	where $\tilde{\pi}: \mathbb{P}(\tilde{E})\rightarrow \tilde{B}$ is the natural $\mathbb{P}^2$-bundle over $\tilde{B}$.
	Note that the unramified double cover $\Phi: \mathbb{P}(\tilde{E})\rightarrow \mathbb{P}(E_{[0]}(2,1)\oplus N)\cong \mathbb{P}(V'_2)$ induces an involution  on $\mathbb{P}(\tilde{E})$, which we also denote by $T_\eta$. Let $G:=<T_\eta>$, then $G$ acts on $\tilde{B}$ and $\mathbb{P}(\tilde{E})$ effectively.
	
	Now to find  a conic bundle $\mathcal{C}\in |\mathcal{O}_{\mathbb{P}(V'_2)}(2)\otimes \det(V'_1)^{-2}|=|\mathcal{O}_{\mathbb{P}(E_{[0]}(2,1)\oplus N)}(2)|$ containing exactly one RDP as singularity, is equivalent  to finding   a G-invariant conic $\mathcal{\tilde{C}}\in |\mathbb{P}(\tilde{E})(2)|$ on $\mathbb{P}(\tilde{E})$, which  contains exactly 2 RDP's on two different fibres as singularities. Similarly, to find  a curve $\delta \in |\mathcal{O}_{\mathcal{C}}(3)\otimes \pi^*(-2\cdot0-2\tau)|$ on $\mathcal{C}$ such that $\delta$ does not contain the singular point of $\mathcal{C}$ and  the double cover  $X$ of $\mathcal{C}$ with branch divisor  $\delta$ has at most RDP'S as singularities,
	is equivalent  to finding  a G-invariant curve $\tilde{\delta} \in |\mathcal{O}_{\mathcal{\tilde{C}}}(3)\otimes \tilde{\pi}^*(-0-\eta)|$ on $\mathcal{\tilde{C}}$ such that $\tilde{\delta}$ does not contain the singularities of $\mathcal{\tilde{C}}$, and    the double cover $\tilde{X}$ of $\tilde{\mathcal{C}}$ branched on $\tilde{\delta}$ has at most RDP'S as singularities.

	Take relative coordinates $y_1:\mathcal{O}_{\tilde{B}}(p) \rightarrow \tilde{E}$, $y_2:\mathcal{O}_{\tilde{B}}(p') \rightarrow \tilde{E}$, $y_3:\mathcal{O}_{\tilde{B}} \rightarrow \tilde{E}$ on the fibre of  $\mathbb{P}(\tilde{E})$.    Then the action of $T_\eta^*$ is just: $y_1\mapsto y_2, y_2\mapsto y_1$ and $ y_3\mapsto -y_3$.
	Let  $\mathcal{\tilde{C}}\subset\mathbb{P}(\tilde{E})$ be the conic bundle defined by $$f=a_1^2y_1^2+a_2^2y_2^2+a_3y_3^2=0,$$ where $a_1\in H^0(\mathcal{O}_{\tilde{B}}(p)), a_2=T_\eta^*a_1\in H^0(\mathcal{O}_{\tilde{B}}(p')), a_3\in H^0(\phi^*\mathcal{O}_B)^G=H^0(\mathcal{O}_{\tilde{B}})$,  $a_3\neq 0$. It is easy to see  that $\tilde{\mathcal{C}}$ is G-invariant.  Since $$(\frac{\partial f}{\partial y_1}, \frac{\partial f}{\partial y_2}, \frac{\partial f}{\partial y_3})=(2a_1^2y_1, 2a^2_2y_2, 2a_3y_3),$$
	the only possible singularities of $\tilde{\mathcal{C}}$ are: $P_1: y_1= y_3=a_2=0$ and $P_2: y_2=y_3=a_1=0$. It is easy to check that $P_1$ is a $A_1$-singularity on the fibre of  $\tilde{\pi}|_{\tilde{\mathcal{C}}}$ over $p' \in \tilde{B}$, and $P_2$ is a $A_1$-singularity on the fibre of  $\tilde{\pi}|_{\tilde{\mathcal{C}}}$  over $p \in \tilde{B}$.

	Considering \cite{CP06} Lemma 6.14,  we take $\tilde{\delta}$ as the  complete intersection of $\mathcal{\tilde{C}}$ with a relative cubic $\tilde{\mathcal{G}} \in |\mathcal{O}_{\mathbb{P}(\tilde{E})}(3)\otimes \tilde{\pi}^*\mathcal{O}_{\tilde{B}}(-\tilde{0}-\eta)|$.
	Let $\Delta$ be the linear subspace of  $|\mathcal{O}_{\mathbb{P}(\tilde{E})}(3)\otimes \tilde{\pi}^*\mathcal{O}_{\tilde{B}}(-\tilde{0}-\eta)|$ consisting of divisors defined by the equations:
	$$g=b_1y_1^3+b_2y_2^3+b_3y_1y_2y_3,$$
	where $b_1\in H^0(\mathcal{O}_{\tilde{B}}(p')\otimes \phi^*N), b_2=T_\eta^*b_1\in H^0(\mathcal{O}_{\tilde{B}}(p)\otimes \phi^*N)$ , $b_3\in H^0(\phi^*N)$, $b_3\neq 0$.
Since $T_\eta^*b_3=-b_3, T_\eta^*y_3=-y_3$,  an element $\tilde{\mathcal{G}} \in \Delta$ is G-invariant. Moreover, $\tilde{\mathcal{G}} $ does not contain the singularities $P_1, P_2$ of $\tilde{\mathcal{C}}$.
	
	When $b_1,b_3$ vary, $\Delta$ has no fixed points except the 4 curves $C_1:=\{y_1=y_2=0\}$, $C_2:=\{b_1=y_2=0\}$(on the fibre over $p'$), $C_3:=\{b_2=y_1=0\}$(on the fibre over $p$) and $C_4:= \{y_3=b_1y_1^3+b_2y_2^3=0, y_1\neq0,y_2\neq0\}$.  Note that   $C_1$ does not intersect  $\tilde{\mathcal{C}}$, so for a general member $\tilde{\mathcal{G}} \in \Delta$, $\tilde{\delta}=\tilde{\mathcal{G}} \cap \tilde{\mathcal{C}}$ is smooth outside $(C_2\cup C_3\cup C_4)\cap\tilde{\mathcal{C}}$.
	
	Now we show that $\tilde{\delta}$ is smooth at $(C_2\cup C_3\cup C_4)\cap \tilde{\mathcal{C}}$  by computing  the rank of  the Jacobian matrix. For $C_2\cap \tilde{\mathcal{C}}:=\{b_1=y_2=a_1^2y_1^2+a_3y_3^2=0\}$, the Jacobian  matrix is
	\begin{gather*}
	\begin{pmatrix}
	0 & 2a_1^2y_1(\neq 0) & 0 & 2a_3y_3 \\
	ky_1^3(\neq 0)& 0 & 0 & 0
	\end{pmatrix}
	\end{gather*}
	which has rank 2, therefore $\tilde{\delta}$ is smooth at $C_2 \cap \tilde{\mathcal{C}}$.  The proof for $C_3\cap \tilde{\mathcal{C}}$ is similar (since  $C_2,C_3$ are symmetric).
	
	For $C_4\cap \tilde{\mathcal{C}}=\{y_3=a_1^2y_1^2+a_2^2y_2^2=b_1y_1^3+b_2y_2^3=0, y_1\neq0,y_2\neq0 \}$, the Jacobian matrix is
	\begin{gather*}
	\begin{pmatrix}
	0 & 2a_1^2y_1(\neq 0) & 2a_2^2y_2(\neq 0) & 0 \\
	0& 3b_1y_1^2(\neq 0)& 3b_2y_2^2(\neq 0) & b_3y_1y_2(\neq0)
	\end{pmatrix}
	\end{gather*}
	which has rank 2, thus $\tilde{\delta}$ is smooth at $C_4\cap\tilde{\mathcal{C}}$.
	Hence  for a general member $\tilde{\mathcal{G}}  \in \Delta$, $\tilde {\delta}=\tilde{\mathcal{G}} \cap \tilde{\mathcal{C}}$ is smooth. Therefore, the double cover of $\tilde{\mathcal{C}}$ with branch divisor $\tilde{\delta}$ is smooth.
	
	\vspace{3ex}
	Let $\mathcal{C}:=\Phi(\tilde{\mathcal{C}}), \mathcal{G}:=\Phi(\tilde{\mathcal{G}})$ and  $\delta:=\mathcal{C}\cap \mathcal{G}$. By \cite{CP06} Theorem 4.13,  the double cover $S\rightarrow \mathcal{C}$ with branch divisor $\delta$  is a smooth double cover,  and $S$ is a   minimal surface with  $p_g=q=1,K^2=3,g=2$, $V_1=V'_1=E_{[0]}(2,1)$ and $V_2=V'_2=E_{[0]}(2,1)(0)\oplus \mathcal{O}_B(\tau)$.
\end{proof}

\begin{remark}
	\label{belong to M1}
	In fact, the minimal surfaces   constructed in Theorem \ref{M2 not empty} are  contained in $\overline{\mathcal{M}_I^{3,2}}$.
\end{remark}
\begin{proof}
	(1) For general choices of $\mathcal{C}\in |\mathcal{O}_{\mathbb{P}(V_2)}(2)\otimes \pi^*\det(V_1)^{-2}|$ and $\mathcal{G}\in  |\mathcal{O}_{\mathbb{P}(V_2)}(3)\otimes \pi^*\mathcal{O}_B(-2\cdot 0-2\tau)|$, $\delta=\mathcal{C}\cap \mathcal{G}$ is connected:
	
	Since we have proved that  for general choices of $\mathcal{C}$ and  $G$,  $\delta$ is smooth, it suffices to show  $h^0(\mathcal{O}_{\delta})=1$. Let $\pi: W:=\mathbb{P}(V_2)\rightarrow B$ be the natural projective bundle, $T$ be the divisor on $W$ such that $\pi_*\mathcal{O}_W(T)=V_2(-0)$, and $H_t$ be the fibre of $\pi$ over $t\in B$.

	Consider the following exact sequences
	$$0\rightarrow \mathcal{O}_W(-5T+H_0)\rightarrow  \mathcal{O}_W(-3T+H_0)\rightarrow \mathcal{O}_{\mathcal{C}}(-3T+H_0)\rightarrow 0$$
	$$0\rightarrow  \mathcal{O}_{\mathcal{C}}(-3T+H_0)\rightarrow  \mathcal{O}_{\mathcal{C}} \rightarrow \mathcal{O}_{\delta}\rightarrow 0$$
	By \cite{BHPV} Chap. I, Theorem 5.1 and using Serre duality, we have $h^0( \mathcal{O}_W(-3T+H_0))=h^0(\pi_* \mathcal{O}_W(-3T+H_0))=0$, $h^1( \mathcal{O}_W(-3T+H_0))=h^2(\mathcal{O}_W(-H_0+H_\tau))=h^2(\mathcal{O}_B(-0+\tau))=0$, $h^1( \mathcal{O}_W(-5T+H_0))=h^2(\mathcal{O}_W(2T-H_0+H_\tau))=h^2(\pi_*(\mathcal{O}_W(2T-H_0+H_\tau))=0$, $h^2(\mathcal{O}_W(-5T+H_0))=h^1(\mathcal{O}_W(2T-H_0+H_\tau))=h^1(S^2(V_1)(\tau-0))=0$. Hence we have $h^1(\mathcal{O}_{\mathcal{C}}(-3T+H_0) )=0$.
	Since moreover $h^0(\mathcal{O}_{\mathcal{C}} )=1$, we get 	 $h^0(\mathcal{O}_{\delta})=1$.

	(2)
	$S$ is    not contained in   $\overline{\mathcal{M}_{III}^{3,2}}$.
	If these surfaces  were  contained in $\overline{\mathcal{M}_{III}^{3,2}}$, then   we get a 1-parameter connected flat family $\mathcal{S}\rightarrow T$ of  canonical models of minimal surfaces  with $p_g=q=1, K^2=3, g=2$ such that the central fibre has $\nu_2=2$ while  a general fibre has $\nu_2=3$.  Now we have a flat  family of  double covers of conic bundles having only RDP's as singularities such that, for a general fibre  the branch curve is reducible and disconnected (see \cite{CP06} proposition 6.16), hence $h^0(\delta_t)>1 (t\neq0)$;  while for  the central fibre  the branch curve is irreducible and smooth, hence  $h^0(\delta_0)=1<h^0(\delta_t)(t\neq 0)$,  contradicting   the upper semi-continuity.

	By the proof of   Corollary \ref{number v2 not a deformation invariant},  we  see  that $S$ is contained in $\overline{\mathcal{M}_I^{3,2}}$.
\end{proof}

\vspace{3ex}
 $\mathbf{Acknowledgements}.$
 The author is sponsored by China Scholarship Council ``High-level university graduate program''. 
 
The author would like to thank his advisor, Professor Fabrizio Catanese at Universit\"at Bayreuth, for suggesting this research topic,  for a lot of  inspiring discussion with the author  and for his encouragement  to the author.  The author would also like to thank his domestic advisor, Professor Jinxing Cai  at Peking University, for his encouragement and some useful suggestions.
The author is grateful to Binru Li and Roberto  Pignatelli for a lot of helpful discussion.


\vspace{3ex}
School of Mathematical Sciences, Peking  University, Yiheyuan Road 5, Haidian District, Beijing 100871,  People's Republic of China

E-mail address: 1201110022@pku.edu.cn

\vspace{2ex}
Lehrstuhl Mathematik VIII, Universit\"at Bayreuth,  NW II,  Universit\"atsstr. 30, 95447 Bayreuth, Germany

E-mail address: songbo.ling@uni-bayreuth.de

\end{document}